\title{Sequential Linearization Method for Bound-Constrained Mathematical Programs with Complementarity Constraints}
\author{Christian Kirches\thanks{
Institute for Mathematical Optimization, Technische Universit\"{a}t Carolo-Wilhelmina zu Braunschweig. Universit\"atsplatz 2, 38106 Braunschweig, Germany. (\email{c.kirches@tu-bs.de})}
\and Jeffrey Larson\thanks{ Mathematics and Computer Science Division, Argonne National Laboratory, Lemont, IL 60439, U.S.A. (\email{jmlarson@anl.gov}, \email{leyffer@anl.gov})}
\and Sven Leyffer\footnotemark[2]
\and Paul Manns\footnotemark[2]\ \thanks{
Faculty of Mathematics, TU Dortmund University. Vogelpothsweg 87, 44227 Dortmund, Germany. (\email{paul.manns@tu-dortmund.de})}}
\newtheorem{theorem}{Theorem}[section]
\newtheorem{definition}[theorem]{Definition}
\newtheorem{lemma}[theorem]{Lemma}
\newtheorem{remark}[theorem]{Remark}
\newtheorem{proposition}[theorem]{Proposition}
\newcommand{\email}{\url}
\newtheorem{assumption}{Assumption}[section]
\theoremstyle{empty}
\definecolor{darkgreen}{rgb}{0,0.5,0} 
\long\def\@mn@@@marginnote[#1]#2[#3]{%
  \begingroup
    \ifmmode\mn@strut\let\@tempa\mn@vadjust\else
      \if@inlabel\leavevmode\fi
      \ifhmode\mn@strut\let\@tempa\mn@vadjust\else\let\@tempa\mn@vlap\fi
    \fi
    \@tempa{%
      \vbox to\z@{%
        \vss
        \@mn@margintest
        \if@reversemargin\if@tempswa
            \@tempswafalse
          \else
            \@tempswatrue
        \fi\fi
          \rlap{%
            \if@mn@verbose
              \PackageInfo{marginnote}{xpos seems to be \@mn@currxpos}%
            \fi
            \begingroup
              \ifx\@mn@currxpos\relax\else\ifx\@mn@currxpos\@empty\else
                  \kern-\dimexpr\@mn@currxpos\relax
              \fi\fi
              \ifx\@mn@currpage\relax
                \let\@mn@currpage\@ne
              \fi
              \if@twoside\ifodd\@mn@currpage\relax
                  \kern\oddsidemargin
                \else
                  \kern\evensidemargin
                \fi
              \else
                \kern\oddsidemargin
              \fi
              \kern 1in
            \endgroup
            \kern\marginnotetextwidth\kern\marginparsep
            \vbox to\z@{\kern\marginnotevadjust\kern #3
              \vbox to\z@{%
                \hsize\marginparwidth
                \linewidth\hsize
                \kern-\parskip
                \marginfont\raggedrightmarginnote\strut\hspace{\z@}%
                \ignorespaces#2\endgraf
                \vss}%
              \vss}%
          }%
      }%
    }%
  \endgroup
}
\newcommand{\changed}[1]{{\color{black} #1}}
\newcommand{\revised}[2]{{\color{black} #2}}
\newcommand{\xk}{x^{k}}
\newcommand{\dn}{d^{k,l}}
\newcommand{\trn}{\Delta^{k,l}}
\newcommand{\dps}{\displaystyle}
\newcommand{\mini}{\mathop{\rm minimize}}
\newcommand{\st}{\textnormal{subject\ to:}}
\newcommand{\R}{\mathbb{R}}
\newcommand{\tn}{\textnormal}
\newcommand{\LPCC}{\tn{LPCC}}
\begin{document}

\maketitle

\begin{abstract}
  We propose an algorithm for solving bound-constrained mathematical programs
  with complementarity constraints on the variables. Each iteration of the algorithm involves
  solving a linear program with complementarity constraints in order to obtain
  an estimate of the active set. The algorithm enforces
  descent on the objective function to promote global convergence to
  B-stationary points. We provide a convergence analysis and preliminary
  numerical results on a range of test problems.
  We also study the effect of fixing the active
  constraints in a bound-constrained quadratic program that can be solved on
  each iteration in order to obtain fast convergence. 
\end{abstract}



\section{Introduction}
\label{S:intro}

We consider the bound-constrained mathematical program with complementarity
constraints (MPCC) of the form
\begin{gather}\label{E:mpcc} 
   \begin{aligned}
     \mini_{x}\ 	& f(x)		\\
     \st\ & \ell_0 \leq x_0 \leq u_0, \\
		& 0 \leq x_1 \; \perp \; x_2 \geq 0,
   \end{aligned}
\end{gather}
where $f$ is smooth and $x \coloneqq
(x_0, x_1, x_2) \in \R^n$ is a partition of the variables into
bound-constrained variables $x_0$ (e.g., controls) and complementarity
variables $x_1$, $x_2$ (e.g., states). We let $n \coloneqq n_0 + 2 n_{1}$, with
$x_0 \in \mathbb{R}^{n_0}$ and $x_1$, $x_2 \in \mathbb{R}^{n_{1}}$, where
$n_0$, $n_1$ are nonnegative integers.
We let $\ell_{0,i}$ denote element $i$ of $\ell_{0}$ and similarly for $u_0$, $x_0$,
$x_1$, and $x_2$ as well.
We also assume that,
without loss of generality, $\ell_{0,i} < u_{0,i}$ for $i \in \{1,\ldots,n_{0}\}$,
because otherwise we could remove variable $x_{0,i}$ by fixing it to be $\ell_{0,i}=u_{0,i}=x_{0,i}$.
The feasible set of~\eqref{E:mpcc} is nonempty because $\ell_0 < u_0$.

\paragraph{Motivation}
Problems of the form~\eqref{E:mpcc} appear as subproblems in an 
augmented Lagrangian approach for solving more general MPCCs.
This approach extends existing augmented Lagrangian approaches
for standard nonlinear programs such
as~\cite{doi:10.1080/10556780701577730,Friedlander2005,curtis2015adaptive},
{\tt LANCELOT}~\cite{ConGouToi:92,ConGouToi:96}, or {\tt TANGO},~\cite{algencan,birgin2014practical} to MPCCs.
To see how~\eqref{E:mpcc} can
appear as a subproblem,
consider the general MPCC
\begin{equation}\label{eq:genMPCC}
\begin{aligned}
   \mini_x\ & f(x) \\
   \st\     & c(x) = 0, \\
            & 0 \leq g(x) \; \perp \; h(x) \geq 0,
\end{aligned}
\end{equation}
where $c: \R^n \to \R^m$, and $g, h: \R^n \to \R^p$, are smooth functions for
some $m,p \in \mathbb{N}$.
By introducing slack variables $s_g, s_h \in \R^p$,~\eqref{eq:genMPCC} can be written
as a problem with simple complementarity constraints:
\[
\begin{aligned}
   \mini_{x,s_g,s_h} \ & f(x) \\
   \st\     & c(x) = 0,\;  s_g - g(x) = 0, \;  s_h - h(x) = 0, \\
   & 0 \leq s_g \; \perp \; s_h \geq 0.
\end{aligned}
\]
By introducing Lagrange multipliers $y,z_g,z_h$ for the three sets of equality constraints,
we obtain an augmented Lagrangian of~\eqref{eq:genMPCC}
\[
\begin{aligned}
  {\cal L}_{\rho}(x,s_g,s_h,y,z_g,z_h)
     \coloneqq & f(x) - y^Tc(x) - z_g^T (s_g - g(x)) - z_h^T (s_h - h(x)) \\
        & + \frac{\rho}{2}\left( \| c(x) \|_2^2 + \| s_g - g(x) \|_2^2 + \| s_h - h(x) \|_2^2 \right),
   \end{aligned}
\]
and the augmented Lagrangian subproblem associated with~\eqref{eq:genMPCC} becomes
\begin{gather}\label{eq:alsubproblem}
   \begin{aligned}
     \mini_{x,s_g,s_h}\ 	&  {\cal L}_{\rho}(x,s_g,s_h,y,z_g,z_h) \\
     \st\ &  0 \leq s_g \; \perp \; s_h \geq 0,
   \end{aligned}
\end{gather}
which has the same structure as~\eqref{E:mpcc}.

\paragraph{Related Work} We propose to solve 
\eqref{E:mpcc} with a trust-region strategy that respects the
complementarity constraints in every iteration. Because all iterates are
feasible for~\eqref{E:mpcc}, every accepted step also provides an estimate
of the active set. While we are not aware of any publication that analyzes this
described setting and method, active set and trust-region methods have
been used for MPCCs in the past. Scholtes and St{\"o}hr~\cite{SchSt:97} analyze
a trust-region method on exact penalty functions that arise from MPCCs.
Fukushima and Tseng~\cite{FukuTsen:02} iteratively
compute approximate KKT-points of nonlinear programs (NLPs) that arise
from $\varepsilon$-active sets induced by the previous iterates,
where $\varepsilon$ is a slack parameter that is driven to zero over the iterations.
J{\'u}dice et al.~\cite{judice2007complementarity} 
and Chen and Goldfarb~\cite{chen2007active} propose active set strategies
that also respect the complementarity constraints in every iteration by 
alternatingly computing descent steps (projected Newton steps
in~\cite{chen2007active})
on the null space of the active constraints and updating entries of
the active set based on the Lagrange multipliers.

\paragraph{Notation} We use subscripts to identify components of vectors or
matrices, and superscripts to indicate iterates. Similarly,
functions that are evaluated at particular points are denoted as
$f^{k} \coloneqq f(z^{k})$, for example.

\paragraph{Structure of the Remainder} We present our
algorithm to solve~\eqref{E:mpcc} in Section~\ref{S:algm}. Then its execution is
demonstrated on an example problem in Section~\ref{S:exa}.
Section~\ref{S:proof} analyzes the convergence of the
iterates. Section~\ref{sec:bqp} presents two approaches for including
second-order information into the algorithm in order to improve
its convergence speed. In Section~\ref{sec:synthetic_benchmark_results}, we
present computational results for two sets of benchmark problems.
We show in Section~\ref{S:generalization} that our developments 
generalize to lower and upper bounds on $x_1, x_2$, and mixed
complementarity conditions between $x_1$ and $x_2$.

\section{Algorithm Statement}
\label{S:algm}
\setcounter{equation}{0}

We now introduce our SLPCC algorithm
that solves a sequence of 
linear programs with
complementarity constraints.
We will show that it
converges to B-stationary (or Bouligand stationary) points.

\begin{definition}[\cite{LuoPanRal:96}, \S3.3.1]\label{DefBstat}
A feasible point $x^*$ of~\eqref{E:mpcc} is called 
{\em B-stationary\/}
if $d = 0$ is a local minimizer of
the linear program with complementarity constraints (LPCC)
obtained by linearizing $f$, about $x^*$: 
\begin{equation}\label{eq:bstat_eqn}
\begin{aligned}
   \mini_d\ & \nabla f(x^*)^{T} d \\
   \st\ & \ell_0 \leq x_0^* + d_0 \leq u_0 \\
               & 0 \leq x_1^* + d_1 \; \perp \; x_2^* + d_2 \geq 0,
\end{aligned}
\end{equation}
where the step $d$ is partitioned as $d \coloneqq \begin{pmatrix} d_0, d_1, d_2\end{pmatrix}$.
\end{definition}

The B-stationarity condition in Definition~\ref{DefBstat} is also referred to
as linearized B-stationarity~\cite{flegel2005abadie}, although it is easy to see that
the two definitions are equivalent because of the simple structure of the
constraints in~\eqref{E:mpcc}. Definition~\ref{DefBstat} is also closely related to
stationarity in nonlinear optimization, interpreted as the absence of feasible
first-order descent directions.

\subsection{Trust-Region Subproblem of the SLPCC Algorithm}\label{sec:subproblems}

We now define a subproblem that is solved repeatedly by our algorithm.
The subproblem is motivated by Definition~\ref{DefBstat}
with an additional $\ell_\infty$-norm trust-region constraint.
Given a point $x \in \mathbb{R}^n$ and a trust-region radius $\Delta>0$, the
LPCC subproblem is 
\begin{equation}\label{eq:LPCC}
  \LPCC(x,\Delta)
  \coloneqq
  \left\{ \begin{aligned}
       \mini_d\  & \nabla f(x)^T d \\
       \st\      & \ell_0 \le x_0 + d_0 \le u_0, \\
                & 0 \leq x_1 + d_1 \; \perp \; x_2 + d_2 \geq 0, \\
                & \| d \|_{\infty} \leq \Delta.
   \end{aligned} \right. 
\end{equation}
During each SLPCC iteration $k$, we solve one or more instances of  
\eqref{eq:LPCC} for a sequence of trust region radii around the current
iterate $\xk$. 
From Definition~\ref{DefBstat} and the fact that $\Delta$ is strictly
positive it follows that $\xk$ is B-stationary if and only if
$d = 0$ solves $\LPCC(\xk,\Delta)$. That is, only when the trust-region constraint is inactive.
\begin{remark}
Our global convergence results readily generalize to other
trust-region norms,
but we find that the $\ell_{\infty}$-norm
has useful properties that allow us
to solve~\eqref{eq:LPCC} efficiently.
The problem~\eqref{eq:LPCC} decomposes
by component into $n_0$ bound-constrained
and $n_1$ two-dimensional LPCCs.
Each of these problems can be solved by
evaluating at most four feasible points.
Thus, we can solve
\eqref{eq:LPCC} in
$\mathcal{O}(n_0 + n_1)$ objective evaluations.
\end{remark}

\subsection{An SLPCC Algorithm for MPCCs}
We now state the SLPCC algorithm in Algorithm~\ref{SLPCC}; this
provides an overview of our approach first while detailed steps are
provided later.
Algorithm~\ref{SLPCC} has an outer loop (indexed by $k$) and
an inner loop (indexed by $l$). The inner loop reduces the trust-region radius
$\trn$ until a new iterate $x^{k+1}$ is found or the algorithm terminates with a certificate
that the current iterate $x^{k}$ is B-stationary. A new iterate is
acceptable if it is feasible for~\eqref{E:mpcc} and the improvement at
$f(x^{k+1})$ relative to $- \nabla f(\xk)^T \dn$ is more than a fixed fraction
$\sigma \in (0,1)$. The outer loop resets the trust-region radius to
$\Delta^{k,0} \in [\underline{\Delta},\overline{\Delta}^k]$, where $\underline{\Delta}>0$
is fixed and $\overline{\Delta}^k > \underline{\Delta}$ is nondecreasing. Then, the outer loop generates the
next iterate $x^{k+1}$ by invoking the inner loop.
If $\nabla f(\xk)^T \dn = 0$ in Line~\ref{ln:bstat_check} of 
Algorithm~\ref{SLPCC}, it follows that $d = 0$ is also a solution
of LPCC$(\xk,\trn)$, and thus $\xk$ is B-stationary
by Definition~\ref{DefBstat} and the fact that $\Delta^{k,l} > 0$
and hence the trust-region constraint is inactive.

\begin{algorithm}[t]
\caption{SLPCC Algorithm for Bound-Constrained MPCCs~\eqref{E:mpcc}}\label{SLPCC}
  \fontsize{8}{8}\selectfont
\textbf{Given:} $x^0$ feasible for~\eqref{E:mpcc};
                $\overline{\Delta}^0 > \underline{\Delta} > 0$;
                $\sigma \in (0,1)$
\begin{algorithmic}[1]
  \algrenewcommand{\algorithmiccomment}[1]{\hfill \# \texttt{ #1}}
    \For{$k = 0,1,\ldots$}
    \State Reset (inner) trust-region radius $\Delta^{k,0} \in [\underline{\Delta},\overline{\Delta}^k]$\label{ln:tr_reset}
     \For{$l = 0,1,\ldots$}
       \State Solve LPCC$(\xk,\trn)$ for a first-order step $\dn$,
       (see \S\ref{sec:efficient_lpec_solution}).
       \label{ln:dn}
       \If{$d = 0$ is a local minimizer of LPCC$(\xk,\trn)$}\label{ln:bstat_check}
         \State \textbf{terminate} \Comment{$\xk$ is B-stationary}
       \EndIf    
       \State Optionally, improve $\dn$ \revised{R2.1}{via}
       $\dn \gets \texttt{FIND\_CAUCHY\_POINT}(\xk, \dn,\trn)$. (see 
\S\ref{sec:cauchy_line_search}).
       \label{ln:cauchy}
       \State Evaluate $f(\xk +  \dn)$ and compute $\rho^{k,l} \gets \frac{f(\xk) -f(\xk + \dn)}{- \nabla f(\xk)^T \dn}$
       \If{$\rho^{k,l} \ge \sigma $}\label{ln:accept}
         
         \State Set $x^{k+1} \gets\xk + \dn$ and $\overline{\Delta}^{k+1} \gets \max\{ \overline{\Delta}^k, 2 \trn \}$\label{ln:outer_it_update}
         \State \textbf{break} \Comment{$x^{k+1}$ and $\overline{\Delta}^{k+1}$ found}
       \Else
       	\State Reduce trust-region radius $\Delta^{k,l+1} \gets \trn / 2$
       \EndIf
     \EndFor
     \State Optionally, improve $x^{k+1}$ \revised{R2.1}{via}
     $x^{k+1} \gets \texttt{SOLVE\_BQP}(x^{k+1})$ (see \S\ref{sec:bqp_steps}).
     \label{ln:bqp}
   \EndFor
\end{algorithmic}   
\end{algorithm}

The parameter $\sigma$ controls the acceptable ratio between the actual
reduction $f(\xk) - f(\xk + \dn)$ and the linear predicted
reduction $- \nabla f(\xk)^T \dn$; the predicted reduction is a positive number by
definition of the subproblem LPCC$(\xk,\trn)$. For our global
convergence analysis, the parameter $\sigma$ may be chosen arbitrarily in the open
interval $(0,1)$.

Algorithm~\ref{SLPCC} also contains two optional steps, which
make use of second-order information. First,
in Line~\ref{ln:cauchy} we can search for a local minimizer
(Cauchy point) of a quadratic model along a piecewise defined path.
Second, we can add a bound-constrained quadratic minimization
in Line~\ref{ln:bqp} that uses the fixed active set of constraints identified when 
computing $x^{k+1}$.
Global convergence of Algorithm~\ref{SLPCC} to B-stationary points does not
depend on---and is not hampered by---these optional steps. 
Section~\ref{sec:bqp} discusses these second-order steps in greater detail.
We present a simpler convergence analysis
without these optional steps in Section~\ref{S:proof}.

\subsection{Initialization of Algorithm \ref{SLPCC}}\label{sec:initialization}
While our analysis assumes we have a feasible initial point,
we note that this assumption is not critical.
If a candidate initial point $\hat{x}^0$  is not feasible, then
we can project its first $n_0$ components into the bounds, such that
$l_0 \leq x_0^0\leq u_0$. A similar operation produces feasible
components of $x^0$ for the complementarity constraints:
\[ x_{1,i}^0 \coloneqq \max\{ \hat{x}_{1,i}^0, 0 \}, \quad x_{2,i}^0 \coloneqq \max\{ \hat{x}_{2,i}^0, 0 \}, \quad
\begin{cases}
  x_{1,i}^0  \coloneqq 0 & \text{if} \; x_{1,i}^0 \leq x_{2,i}^0 \\
  x_{2,i}^0  \coloneqq 0 & \text{otherwise,}
\end{cases}
\]
where the two $\max$ operations are performed before the case statement. 
Therefore, to simplify the presentation, we assume that the
initial iterate $x^0$ in feasible for~\eqref{E:mpcc}.

\subsection{Efficient LPCC Solution}\label{sec:efficient_lpec_solution}

Next, we show that the trust-region subproblem LPCC$(x,\Delta)$ in Algorithm~\ref{SLPCC} can be solved efficiently.
We can rewrite the objective of LPCC$(x,\Delta)$ as
\[ \nabla f(x)^T d = \sum_{i=1}^{n_0} \nabla f(x)_{0,i} d_{0,i} + \sum_{i=1}^{n_{1}} \nabla f(x)_{1,i} d_{1,i}  + \sum_{i=1}^{n_{1}} \nabla f(x)_{2,i} d_{2,i}, \]
where subscript index pairs identify entries of $\nabla f(x)$ corresponding to the entries of $d$ and $x$.

With this new objective, the following proposition shows that LPCC$(x,\Delta)$ can be decomposed into $n_0$ independent one-dimensional linear programs (LPs)
and $n_1$ independent two-dimensional LPCCs, all of which can be solved 
independently, making the computational effort for solving each LPCC$(x,\Delta)$ linear in $n_0+n_1$.

\begin{proposition}\label{prp:decomposition}
  The problem \LPCC$(x,\Delta)$ in~\eqref{eq:LPCC} decomposes into $n_0 + n_{1}$
independent subproblems, namely, $n_0$ one-dimensional LPs:
\begin{gather}\label{eq:sp_d0}
\begin{aligned}
	\mini_{d_{0,i}}\ &\nabla f(x)_{0,i}d_{0,i}\\
  \st\ &\ell_{0,i} \le x_{0,i} + d_{0,i} \le u_{0,i}, |d_{0,i}| \le \Delta
\end{aligned}
\end{gather}
for $i \in \{1,\ldots,n_0\}$, and $n_1$ two-dimensional LPCCs: 
\begin{gather}\label{eq:sp_d12}
\begin{aligned}
\mini_{d_{1,i},d_{2,i}}\ &\nabla f(x)_{1,i}d_{1,i} + \nabla f(x)_{2,i}d_{2,i} \\
\st\ &0 \le x_{1,i} + d_{1,i} \perp x_{2,i} + d_{2,i}\ge 0, \|(d_{1,i}, d_{2,i})^T\|_\infty \le \Delta
\end{aligned}
\end{gather}
for $i \in \{1,\ldots,n_{1}\}$.
\end{proposition}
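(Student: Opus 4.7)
The plan is to exploit separability of both the objective and the feasible set of \LPCC$(x,\Delta)$ under the $\ell_\infty$ norm. The objective is already written componentwise as a sum over the $n_0$ entries of $d_0$ and the $n_1$ pairs $(d_{1,i}, d_{2,i})$, with no cross terms. So the only thing to verify is that the feasible set is a Cartesian product of $n_0 + n_1$ sets, each depending on disjoint blocks of variables; the proposition then follows from the standard fact that minimizing a separable objective over a Cartesian product decomposes into independent minimizations over the factors.

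First I would examine the constraints of \eqref{eq:LPCC} one by one. The bound constraint $\ell_0 \le x_0 + d_0 \le u_0$ is naturally componentwise, giving $\ell_{0,i} \le x_{0,i} + d_{0,i} \le u_{0,i}$ for each $i \in \{1,\ldots,n_0\}$, involving only $d_{0,i}$. The complementarity condition $0 \le x_1 + d_1 \perp x_2 + d_2 \ge 0$ unpacks, by definition of the perpendicularity symbol for nonnegative vectors, into $n_1$ independent conditions $0 \le x_{1,i} + d_{1,i} \perp x_{2,i} + d_{2,i} \ge 0$, each involving only the pair $(d_{1,i}, d_{2,i})$.

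The slightly less trivial step is handling the trust-region constraint, which globally couples all components of $d$. Here I would use the fact that $\|d\|_\infty \le \Delta$ is equivalent to the simultaneous componentwise bounds $|d_{0,i}| \le \Delta$ for $i \in \{1,\ldots,n_0\}$ and $\|(d_{1,i}, d_{2,i})^T\|_\infty \le \Delta$ for $i \in \{1,\ldots,n_1\}$. This is precisely the feature the authors highlight in the remark following \eqref{eq:LPCC}, and it is the key reason why the $\ell_\infty$ norm, rather than, say, $\ell_2$, yields decomposition.

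Combining the three observations, the feasible set of \LPCC$(x,\Delta)$ is the Cartesian product of the feasible sets of the subproblems~\eqref{eq:sp_d0} and~\eqref{eq:sp_d12}, and the objective is a sum of the respective subproblem objectives. Hence any minimizer is obtained by concatenating minimizers of the individual subproblems, and conversely. The main (minor) obstacle is really just being careful with the complementarity symbol: one must note that for nonnegative vectors $u, v \in \R^{n_1}$, the condition $u \perp v$ means $u^T v = 0$, which, combined with $u, v \ge 0$, forces $u_i v_i = 0$ for each $i$ independently, so no interaction across indices remains.
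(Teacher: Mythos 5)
Your proposal is correct and follows essentially the same route as the paper's proof, which likewise rests on the separability of the linear objective and the observation that the only coupling constraints are the two-dimensional complementarity conditions linking each pair $(d_{1,i},d_{2,i})$. You simply spell out more explicitly the componentwise decomposition of the $\ell_\infty$ trust-region constraint and of the perpendicularity condition for nonnegative vectors, both of which the paper leaves implicit.
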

\begin{proof}
  The decomposition follows from the linearity of the objective
  of~\eqref{eq:LPCC} and the fact that there are no coupling constraints
  between $d_0$
  and $(d_1,d_2)$ apart from simple two-dimensional complementarity
  constraints. In particular, for all $i \in \{1,\ldots,n_{1}\}$, the variables
  $d_{1,i}$ and $d_{2,i}$ are coupled only by the constraints $0 \le x_{1,i} + d_{1,i} \perp x_{2,i} + d_{2,i} \ge 0$.
\end{proof}

\begin{figure}
\minipage{0.249\textwidth}
\centering
\begin{tikzpicture}
\begin{axis}[height=4.6cm, xmin=-.5, xmax=1.25, ymin=-.75, ymax=1.15, xtick={0}, ytick={0}, extra x ticks={0},	extra y ticks={0}, extra tick style={grid=major},
unit vector ratio={1 1}]
	\addplot[mark=none, thick] coordinates { (0,1) (0,0) (1,0) };
	\addplot[mark=none, dashed, darkgray] coordinates { (-0.25,-0.5) (0.75,-0.5) (0.75,0.5) (-0.25,0.5) (-0.25,-0.5) };
	\node[] at (axis cs:1.0,-0.25) {$x_{1,i}$};
	\node[] at (axis cs:-0.25,1.0) {$x_{2,i}$};		
	\addplot[mark=square*,darkgray] coordinates { (0.25,0.) };	
	\node[above right] at (axis cs:0.25,0) {$A^0$};	
	\addplot[mark=*,darkgray] coordinates { (0.75,0.) };
	\node[above right] at (axis cs:0.75,0) {$A^1$};
	\addplot[mark=*,darkgray] coordinates { (0.0,0.0) };
	\node[above right] at (axis cs:0,0) {$A^2$};	
	\addplot[mark=*,darkgray] coordinates { (0.0,0.5) };
	\node[above right] at (axis cs:0,0.5) {$A^3$};
\end{axis}
\end{tikzpicture}
\endminipage\hfill\minipage{0.249\textwidth}
\centering
\begin{tikzpicture}
\begin{axis}[height=4.6cm, xmin=-.5, xmax=1.25, ymin=-.75, ymax=1.15, xtick={0}, ytick={0}, extra x ticks={0},	extra y ticks={0}, extra tick style={grid=major},
unit vector ratio={1 1}]
	\addplot[mark=none, thick] coordinates { (0,1) (0,0) (1,0) };
	\addplot[mark=none, dashed, darkgray] coordinates { (-0.4,-0.3) (0.4,-0.3) (0.4,0.5) (-0.4,0.5) (-0.4,-0.3) };
	\node[] at (axis cs:1.0,-0.25) {$x_{1,i}$};
	\node[] at (axis cs:-0.25,1.0) {$x_{2,i}$};	
	\addplot[mark=square*,darkgray] coordinates { (0,0.1) };	
	\node[above left] at (axis cs:0,0.1) {$B^0$};
	\addplot[mark=*,darkgray] coordinates { (0.4,0.) };
	\node[above right] at (axis cs:0.4,0) {$B^1$};	
	\addplot[mark=*,darkgray] coordinates { (0.0,0.0) };
	\node[above right] at (axis cs:0,0) {$B^2$};	
	\addplot[mark=*,darkgray] coordinates { (0.0,0.5) };
	\node[above right] at (axis cs:0,0.5) {$B^3$};
\end{axis}
\end{tikzpicture}
\endminipage\hfill
\minipage{0.249\textwidth}
\centering
\begin{tikzpicture}
\begin{axis}[height=4.6cm, xmin=-.5, xmax=1.25, ymin=-.75, ymax=1.15, xtick={0}, ytick={0}, extra x ticks={0},	extra y ticks={0}, extra tick style={grid=major},
unit vector ratio={1 1}]
	\addplot[mark=none, thick] coordinates { (0,1) (0,0) (1,0) };
	\addplot[mark=none, dashed, darkgray] coordinates { (0.1,-0.3) (0.7,-0.3) (0.7,0.3) (0.1,0.3) (0.1,-0.3) };
	\node[] at (axis cs:1.0,-0.25) {$x_{1,i}$};
	\node[] at (axis cs:-0.25,1.0) {$x_{2,i}$};
	\addplot[mark=square*,darkgray] coordinates { (.4,0) };	
	\node[above right] at (axis cs:.4,0) {$C^0$};	
	\addplot[mark=*,darkgray] coordinates { (0.1,0) };
	\node[above right] at (axis cs:0.1,0) {$C^1$};	
	\addplot[mark=*,darkgray] coordinates { (0.7,0) };
	\node[above right] at (axis cs:0.7,0) {$C^2$};	
\end{axis}
\end{tikzpicture}
\endminipage\hfill
\minipage{0.249\textwidth}
\centering
\begin{tikzpicture}
\begin{axis}[height=4.6cm, xmin=-.5, xmax=1.25, ymin=-.75, ymax=1.15, xtick={0}, ytick={0}, extra x ticks={0},	extra y ticks={0}, extra tick style={grid=major},
unit vector ratio={1 1}]
	\addplot[mark=none, thick] coordinates { (0,1) (0,0) (1,0) };
	\addplot[mark=none, dashed, darkgray] coordinates { (-0.25,0.1) (0.25,0.1) (0.25,0.6) (-0.25,0.6) (-0.25,0.1) };
	\node[] at (axis cs:1.0,-0.25) {$x_{1,i}$};
	\node[] at (axis cs:-0.25,1.0) {$x_{2,i}$};
	\addplot[mark=square*,darkgray] coordinates { (0,.35) };	
	\node[above right] at (axis cs:0,.35) {$D^0$};		
	\addplot[mark=*,darkgray] coordinates { (0.0,0.1) };
	\node[above right] at (axis cs:0,0.1) {$D^1$};		
	\addplot[mark=*,darkgray] coordinates { (0.0,0.6) };
	\node[above right] at (axis cs:0,0.6) {$D^2$};		
\end{axis}
\end{tikzpicture}
\endminipage
\caption{Possible cases A, B, C, and D for the intersection of the
trust region
(dashed) with the feasible set of the complementarity constraint in the
$x_{1,i}$-$x_{2,i}$ plane (from left to right). The current point (center of
the trust region) is indicated by a square.}\label{fig:lpec_solve_cases}
\end{figure}
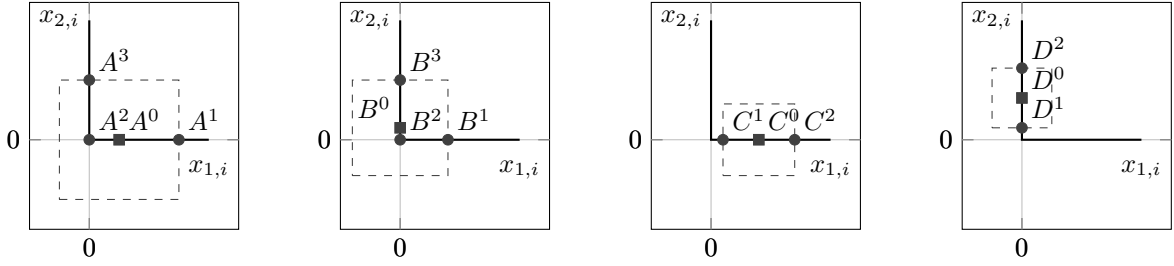

Proposition~\ref{prp:decomposition} ensures that LPCC$(x,\Delta)$ 
can be solved efficiently.
To represent the solution of the subproblems explicitly, we note that
for $x_{1,i}$, $x_{2,i}$ such that $0 \le x_{1,i} \perp x_{2,i} \ge 0$ and
$\Delta > 0$, we can distinguish four mutually exclusive and exhaustive cases,
\begin{gather}\label{eq:cases}
\begin{aligned}
\text{A: } & \Delta \ge x_{1,i} \ge 0 \text{ and } x_{2,i} = 0, \\
\text{B: } & x_{1,i} = 0 \text{ and } \Delta \ge x_{2,i} > 0,\\
\text{C: } & x_{1,i} > \Delta \text{ and } x_{2,i} = 0,\\
\text{D: } & x_{1,i} = 0 \text{ and } x_{2,i} > \Delta,
\end{aligned}
\end{gather}
where \changed{cases C and D and cases A and B} are symmetric.
\revised{R2.2}{Note, however, that the biactive components are only
included in case A.}
Figure~\ref{fig:lpec_solve_cases} shows a sketch of the constraint and
the trust region in the $x_{1,i}$-$x_{2,i}$-plane
for each of these cases. We represent a solution of LPCC$(x,\Delta)$ 
in the following proposition.

\begin{proposition}\label{prp:lpec_solve}
  Let $x \in \R^n$ be feasible for~\eqref{E:mpcc} and let $\Delta > 0$. Then the
  problem $\LPCC(x,\Delta)$ in~\eqref{eq:LPCC} is solved by any vector $d =
  \begin{pmatrix} d_0^T & d_1^T & d_2^T \end{pmatrix}^T$ that satisfies
\begin{equation*}
	d_{0,i} \coloneqq \begin{cases} 
	\min\{u_{0,i} - x_{0,i}, \Delta\}     & \text{ if }\nabla f(x)_{0,i} < 0, \\
	\max\{\ell_{0,i} - x_{0,i}, -\Delta\}	& \text{ if } \nabla f(x)_{0,i} > 0,\\
	0 &\text{ else}
	\end{cases}
\end{equation*}
for $i \in \{1,\ldots,n_0\}$ and
\begin{align}\label{eq:sp_d12_casedistinction}
	\begin{pmatrix}
	d_{1,i} \\ d_{2,i}
	\end{pmatrix} &\in \arg \min
	\left\{\begin{array}{l}
	\nabla f(x)_{1,i}d_{1,i} + \nabla f(x)_{2,i}d_{2,i} : 
	\begin{pmatrix}
	d_{1,i} \\ d_{2,i}
	\end{pmatrix} \in
  D_i
	\end{array}\right\}
\end{align}
for all $i \in \{1,\ldots,n_{1}\}$,
where $D_i$ is a finite set of points:
\begin{equation}
  D_i \coloneqq
  \begin{cases}
    \{ (0,0),  (\Delta,0), (-x_{1,i},0), (-x_{1,i},\Delta)\} & \text{if \changed{$(x_{1,i},x_{2,i})$ satisfy} case \tn{A}},\\
    \{ (0,0), (\Delta,-x_{2,i}), (0,-x_{2,i}), (0,\Delta) \} & \text{if \changed{$(x_{1,i},x_{2,i})$ satisfy} case \tn{B}},\\
    \{ (0,0), (-\Delta,0), (\Delta, 0) \} & \text{if \changed{$(x_{1,i},x_{2,i})$ satisfy} case \tn{C}},\\
    \{ (0,0), (0,-\Delta), (0,\Delta) \} & \text{if \changed{$(x_{1,i},x_{2,i})$ satisfy} case \tn{D}}\\	 
  \end{cases}
\end{equation}
for the cases in~\eqref{eq:cases}.
\end{proposition}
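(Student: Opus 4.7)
The plan is to first invoke Proposition~\ref{prp:decomposition} to decompose $\LPCC(x,\Delta)$ into the $n_0$ one-dimensional LPs~\eqref{eq:sp_d0} and the $n_1$ two-dimensional LPCCs~\eqref{eq:sp_d12}, and then exhibit a minimizer of each subproblem that matches the formulas claimed.

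For the one-dimensional LPs~\eqref{eq:sp_d0}, I would simply observe that, since $\ell_{0,i} < u_{0,i}$, $x_{0,i} \in [\ell_{0,i},u_{0,i}]$, and $\Delta > 0$, the feasible set for $d_{0,i}$ is the nonempty compact interval $[\max\{\ell_{0,i} - x_{0,i},-\Delta\},\,\min\{u_{0,i} - x_{0,i},\Delta\}]$ containing $0$. Minimizing the linear objective $\nabla f(x)_{0,i} d_{0,i}$ over this interval then yields the right endpoint when $\nabla f(x)_{0,i} < 0$, the left endpoint when $\nabla f(x)_{0,i} > 0$, and $d_{0,i} = 0$ (among other choices) when $\nabla f(x)_{0,i} = 0$, reproducing the three cases in the claimed formula for $d_{0,i}$.

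For the two-dimensional LPCCs~\eqref{eq:sp_d12}, the feasible set
\[ F_i \coloneqq \{(d_{1,i},d_{2,i}) : 0 \le x_{1,i} + d_{1,i} \perp x_{2,i} + d_{2,i} \ge 0,\ \|(d_{1,i},d_{2,i})\|_\infty \le \Delta \} \]
is the intersection of the two-axis complementarity cone (translated by $-(x_{1,i},x_{2,i})$) with the $\ell_\infty$-ball of radius $\Delta$. Hence $F_i$ is the union of at most two closed line segments lying along the axes $x_{2,i} + d_{2,i} = 0$ and $x_{1,i} + d_{1,i} = 0$. Since the objective is linear and $F_i$ is compact, the minimum is attained at an endpoint of one of these segments. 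The remaining step is then a case split according to~\eqref{eq:cases} that enumerates these endpoints. In case A, segment $1$ has endpoints $(-x_{1,i},0)$ and $(\Delta,0)$ (using $x_{1,i} \le \Delta$) while segment $2$ has endpoints $(-x_{1,i},0)$ and $(-x_{1,i},\Delta)$; in case B, the two segments have endpoints $(0,-x_{2,i}),(\Delta,-x_{2,i})$ and $(0,-x_{2,i}),(0,\Delta)$; in cases C and D the first complementarity branch is infeasible inside the trust region, so $F_i$ reduces to a single segment with endpoints $(\pm\Delta,0)$ and $(0,\pm\Delta)$, respectively. Adding the always-feasible point $(0,0)$ to each of these vertex sets gives exactly $D_i$, and since $(0,0)$ is either already a vertex (biactive subcase of A and cases C, D) or lies in the interior of a segment where the linear objective is monotone, its inclusion is harmless.

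The only subtlety I anticipate is verifying that the endpoint descriptions in case A remain correct in the biactive subcase $x_{1,i} = 0$, where segment $2$ collapses to a point and two of the listed vertices coincide with $(0,0)$; the same check is needed when $x_{1,i} = \Delta$. In both degenerate situations $D_i$ still contains every vertex of $F_i$, so the minimum of the linear objective over $F_i$ equals its minimum over the finite set $D_i$, which is what the proposition asserts.
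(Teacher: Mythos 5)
Your proposal is correct and follows essentially the same route as the paper's proof: decompose by separability, use linearity of the objective to reduce the two-dimensional subproblems to evaluating the endpoints of the (at most two) feasible line segments plus the always-feasible origin, and enumerate those endpoints case by case; you additionally spell out the $d_0$ argument and the degenerate subcases, which the paper omits as straightforward. (One immaterial slip: in cases C and D the point $(0,0)$ lies in the interior of the single segment rather than being a vertex, but as you note its inclusion in $D_i$ is harmless either way.)
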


\begin{proof}
The result for $d_0$ is straightforward and therefore omitted.

Now consider~\eqref{eq:sp_d12_casedistinction}. The objective of~\eqref{eq:LPCC} is separable for
each $i \in \{1,\ldots,n_0\}$ and for each pair of complementarity
variables $d_{1,i}, d_{2,i}$ for $i \in \{1,\ldots,n_1\}$. The feasible set for each index $i$
consists of the \revised{R2.3}{union or one of the following two line segments,
$\big\{(d_{1,i},d_{2,i}) : x_{1,i} + d_{1,i} \in 
[\max\{0,x_{i,1}-\Delta\}, x_{i,1} + \Delta], x_{2,i} + d_{2,i} = 0\big\}$
and $\big\{(d_{1,i},d_{2,i}) : x_{1,i} + d_{1,i} = 0, x_{2,i} + d_{2,i} \in [\max\{0,x_{i,2}-\Delta\}, x_{i,2} + \Delta]\big\}$.}
Because the objective of~\eqref{eq:LPCC} is linear, an optimum
occurs at $(d_{1,i},d_{2,i})=0$ if $(\nabla f_{1,i},\nabla f_{2,i})=0$, or at the
boundary of one of the two line segments, or at the origin of the feasible
region of~\eqref{eq:LPCC}. Therefore,
to find the (global) minimizer of the partial minimization,
problem~\eqref{eq:sp_d12},
we need to evaluate the linear
objective only at three points (cases C and D if the origin
$(x_{1,i},x_{2,i}) = (0,0)$ is outside the trust region) or at four points (cases
A and B if the point $(x_{1,i},x_{2,i}) = (0,0)$ is inside in the
trust region).

The possible minimizers in the brackets correspond to the points in the cartoons
in Figure~\ref{fig:lpec_solve_cases} in the order of the numbering
of the cartoon.
\end{proof}

\begin{remark}
  Our construction for solving the LPCC$(x,\Delta)$ can be
  interpreted as projecting the steepest-descent
  direction $-\nabla f$ onto the feasible set intersected by the trust region.
  This point of view makes the algorithm a
  projected-gradient algorithm.
\end{remark}

We also allow the choice $(d_{1,i},d_{2,i}) \coloneqq (0,0)$ in the case distinction
in~\eqref{eq:sp_d12_casedistinction}  to
detect B-stationarity.
Algorithmically this means that if the $\arg \min$
in~\eqref{eq:sp_d12_casedistinction} contains
$(0,0)$, then we choose $(0,0)$.

Proposition~\ref{prp:lpec_solve} shows that LPCC$(x,\Delta)$ can be solved by
evaluating $\sum_{i=1}^{n_1} \left| D_i  \right| \leq 4 n_1$ points,
providing an efficient solution approach. We note that other approaches such
as pivoting schemes~\cite{FangLeyfMuns:12} and mixed-integer
approaches~\cite{Hu2008} also provide means to solve LPCC$(x,\Delta)$; these
approaches
are typically less efficient, however,  because they do not exploit the structure of
LPCC$(x,\Delta)$.

\section{Illustrative Example}\label{S:exa}

We now demonstrate the behavior of our SLPCC algorithm on an
illustrative example. This example highlights how the trust region radius is
reset and shows how SLPCC overcomes a degenerate situation, where a second-order
approach may stop at a suboptimal point.
The contraction of the trust region at points that may be
suboptimal is also a possible outcome for the trust region strategy,
see~\cite[Proposition 4.5]{SchSt:97}.

Consider the two-dimensional MPCC
\begin{gather}\label{eq:ex1}
\mini_x\ x_1^3 - (x_2 - 0.5x_2^2)\quad \st\ 0 \le x_1 \perp x_2 \ge 0,
\end{gather}
which has the
global minimizer at $(x_1,x_2) = (0, 1)$
with an objective value of $-0.5$.

We first consider solving~\eqref{eq:ex1} with a sequential quadratic
programming (SQP) approach (see, for example,~\cite[Ch.~18]{nocedal2006no}) that
uses exact Hessian information and with initial iterate values
$x^{0}_1 \in (0,2)$ and $x^{0}_2 \coloneqq 0$.
For simplicity, we assume that ${\Delta^k > x^{0}_1}$
is sufficiently large for all $k$
so that the trust region does not restrict
the steps.
In such an SQP approach, the first quadratic minimization stops at the
local  minimizer $x^{1} = 0.5x^{0}$ of the quadratic model
$m(x_1) = (x_1^0)^3 + 3(x_1^0)^2(x_1 - x_1^0) + 3x_1^0 (x_1 - x_1^0)^2$ arising
from the second-order
Taylor approximation of the objective of~\eqref{eq:ex1}. Because
$x_1^0 > 0$, the choice $x_2^0 = 0$ is necessary for feasibility;
and we will show below that an SQP approach generates a sequence of iterates converging to
$(0,0)$ with $x_1^k > 0$ and $x_2^k = 0$.

Inductively, we obtain $\xk = 0.5x^{k-1}$ for the $k$th
iteration. Setting $0=\frac{\partial m}{\partial x_1}$, we have 
that
\[ \frac{\partial m}{\partial x_1} = 0\; \Rightarrow \;
0 = 3(x_1^k)^2 + 6 x_1^k(x_1 - x_1^k) \; \Rightarrow \;
x_1^{k+1} = \frac{1}{2}x_1^k,
\]
which imply that $x_1^k \searrow 0$ and $x_2^k=0$ for all $k$.
Thus, we obtain $\xk  \to (0,0)$.
The limit point, $(0,0)$, is a so-called M-stationary point~\cite{outrata1999optimality}, which is
not a local minimizer of~\eqref{eq:ex1} or even a B-stationary point.
Therefore,
the tangent cone~\cite[Definition 12.2]{nocedal2006no} at $(0,0)$ contains directions
that are not contained in the tangent cones of the iterates
$\xk$, and thus an SQP approach applied to~\eqref{eq:ex1} starting from $x^0_1 \in
(0,2)$ and $x^0_2 \coloneqq 0$ will converge to $(0,0)$ even though there is a
descent direction at $(0,0)$. This example is a
reason for developing our approach using sequential linear models rather than a sequential
quadratic approach,
which would suffer from the same behavior as an SQP approach.

In contrast, the proposed SLPCC algorithm applied to~\eqref{eq:ex1} will
not converge to the origin when started from a point on the positive $x_1$
axis. This is because it generates iterates
that lie at the boundary of the trust region or at
the origin. If we assume that $\xk \to (0,0)$, then once the algorithm is sufficiently
close ($x_1^k < \Delta$) to the origin, the LPCC problem will detect the
descent direction $(-x_1^k,1)$ from the origin, allowing it to ``turn the corner'' and converge
to $x^*=(0,1)$. The trust region radius reset,
$\Delta^{k,0} \in [\underline{\Delta},\overline{\Delta}^k]$,
in Algorithm~\ref{SLPCC} ensures that the trust region radius does not go to zero too
quickly.

Figure~\ref{fig:guiding_example} illustrates the iterates of both
Algorithm~\ref{SLPCC} and an SQP approach
when starting in the point $(2,0)$ and using the initial trust region radius
$\Delta^{k,0}= 0.5$ for the inner loop.

\begin{figure}[th!]
\minipage{0.48\textwidth}
\centering
\begin{tikzpicture}
\begin{axis}[height=4.5cm, xmin=-.5, xmax=2.5, ymin=-.5, ymax=1.5, xtick={0}, ytick={0}, extra x ticks={0,1,2},	extra y ticks={0,1}, extra tick style={grid=major},
unit vector ratio={1 1}]
	\addplot[mark=none, thick] coordinates { (0,1.25) (0,0) (2.25,0) };
	\node[] at (axis cs:2.1,-0.25) {$x^{0}$};	
	\addplot[mark=*,darkgray] coordinates { (2.0,0.) };
	\node[] at (axis cs:1.6,-0.25) {$x^{1}$};	
	\addplot[mark=*,darkgray] coordinates { (1.5,0.) };
	\node[] at (axis cs:0.6,-0.25) {$x^{2}$};
	\addplot[mark=*,darkgray] coordinates { (0.5,0.) };
	\node[] at (axis cs:-0.25,1.0) {$x^{3}$};		
	\addplot[mark=*,darkgray] coordinates { (0.,1.) };
\end{axis}
\end{tikzpicture}
\endminipage\hfill\minipage{0.48\textwidth}
\centering
\begin{tikzpicture}
\begin{axis}[height=4.5cm, xmin=-.5, xmax=2.5, ymin=-.5, ymax=1.5, xtick={0}, ytick={0}, extra x ticks={0,1,2},	extra y ticks={0,1}, extra tick style={grid=major},
unit vector ratio={1 1}]
	\addplot[mark=none, thick] coordinates { (0,1.25) (0,0) (2.25,0) };
	\node[] at (axis cs:2.1,-0.25) {$x^{0}$};	
	\addplot[mark=*,darkgray] coordinates { (2.0,0.) };
	\addplot[mark=*,darkgray] coordinates { (1.,0.) };
	\addplot[mark=*,darkgray] coordinates { (.5,0.) };
	\addplot[mark=*,darkgray] coordinates { (0.25,0.) };
	\node[] at (axis cs:0.26,-0.25) {$x^{4}$};		
	\addplot[mark=*,darkgray] coordinates { (0.125,0.0) };
	\addplot[mark=*,darkgray] coordinates { (0.06125,0.0) };
	\addplot[mark=*,darkgray] coordinates { (0.030625,0.0) };
	\addplot[mark=*,darkgray] coordinates { (0.0153125,0.0) };
	\addplot[mark=*,darkgray] coordinates { (0.00765625,0.0) };
\end{axis}
\end{tikzpicture}
\endminipage
\caption{Iterates of Algorithm~\ref{SLPCC}
  (left) and an SQP approach
  with exact Hessian approximation (right) on~\eqref{eq:ex1}.}\label{fig:guiding_example}
\end{figure}
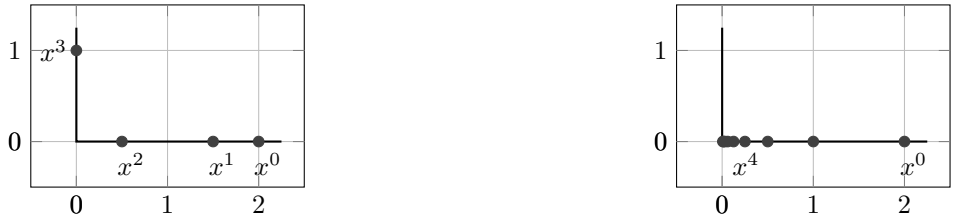

\section{SLPCC Convergence Proof}
\label{S:proof}
\setcounter{equation}{0}

We now establish that the SLPCC algorithm converges to B-stationary points.

\subsection{Preliminaries}\label{sec:proof_prelims}

We first derive a test for B-stationarity that is equivalent to
Definition~\ref{DefBstat} while also being more constructive and simplifying our analysis.
To this end, we consider the constraints that are active at a point $x$
and split them into three disjoint sets that correspond to
the respective entries of $x_0$, $x_1$, and $x_2$:
\begin{equation}\label{eq:Asets}
\begin{array}{ll}
   {\cal A}_0(x) &\coloneqq \left\{ i : x_{0,i} = \ell_{0,i} \text{ or } x_{0,i} =  u_{0,i} \right\}, \\
   {\cal A}_1(x) &\coloneqq \left\{ i : x_{1,i} = 0 \right\}, \text{ and} \\
   {\cal A}_2(x) &\coloneqq \left\{ i : x_{2,i} = 0 \right\}.
\end{array}
\end{equation}
The active set ${\cal A}_0(x)$ for $x_0$ may be decomposed into the indices
where the lower bound is active and the indices where the upper bound is active, 
namely,
\begin{equation*}
	{\cal A}_{0\ell}(x) \coloneqq \{ i : x_{0,i} = \ell_{0,i} \}, \text{ and }\;
	{\cal A}_{0u}(x) \coloneqq \{ i : x_{0,i} = u_{0,i} \}.
\end{equation*} 
These sets satisfy ${\cal A}_{0\ell}(x) \cap {\cal A}_{0u}(x) = \emptyset$
because $\ell < u$, and ${\cal A}_{0\ell}(x) \cup {\cal A}_{0u}(x) = {\cal
A}_{0}(x)$. The set of degenerate (i.e.,~nonstrict) complementarity
constraints at $x$ is denoted
\[ {\cal D}(x) \coloneqq {\cal A}_1(x) \cap {\cal A}_2(x). \]
In addition, we define the set of binding complementarity constraints, namely,
those where strict complementarity holds and either $x_{1,j}=0$ or 
$x_{2,j} = 0$ (but not both), as
\begin{equation*}
   {\cal A}_{1+}(x)  \coloneqq \left\{ j \in {\cal A}_1(x) : x_{2,j} > 0 \right\}, \text{ and } \;
   {\cal A}_{2+}(x)  \coloneqq \left\{ j \in {\cal A}_2(x) : x_{1,j} > 0 \right\} .
\end{equation*}
With this notation, we can now characterize feasible directions
along which the objective of the LPCC~\eqref{eq:bstat_eqn} can be reduced if a
given $x \in \R^n$ is feasible for~\eqref{E:mpcc} but not B-stationary.
We formalize this result in Proposition~\ref{P:non_opt_descent}.

\begin{proposition}\label{P:non_opt_descent}
Let $x$ be feasible for the MPCC~\eqref{E:mpcc} but not be B-stationary.
Then there exist $\varepsilon > 0$, a direction $s \in \mathbb{R}^n$
with $\|s\|_\infty = 1$, and a partition $({\cal D}_1, {\cal D}_2)$ of ${\cal D}(x)$ such that
\begin{subequations}\label{E:NonOpte}
\begin{align}
\nabla f(x)^T s &\le - \varepsilon & & \\
                     s_{0,i} &\ge 0 & \forall i &\in {\cal A}_{0\ell}(x),\\
                     s_{0,i} &\le 0 & \forall i &\in {\cal A}_{0u}(x),\\
                     s_{1,i} &= 0 & \forall i &\in {\cal A}_{1+}(x), \\
                     s_{2,i} &= 0 & \forall i &\in {\cal A}_{2+}(x), \\
                     s_{1,i} &= 0 \text{ and } s_{2,i} \ge 0 & \forall i &\in {\cal D}_{1}, \\
                     s_{1,i} &\ge 0 \text{ and } s_{2,i} = 0 & \forall i &\in {\cal D}_{2}.
\end{align}
\end{subequations}   
\end{proposition}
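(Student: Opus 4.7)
The plan is to extract a small feasible descent direction for the linearized B-stationarity LPCC~\eqref{eq:bstat_eqn} from the failure of B-stationarity, and then read off the sign pattern in~\eqref{E:NonOpte} componentwise from LPCC feasibility.

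First I would invoke Definition~\ref{DefBstat}: since $x$ is not B-stationary, $d = 0$ fails to be a local minimizer of~\eqref{eq:bstat_eqn}, so for every $\delta > 0$ there exists a $d^\delta$ feasible for that LPCC with $\|d^\delta\|_\infty < \delta$ and $\nabla f(x)^T d^\delta < 0$. I would then choose $\delta$ strictly smaller than $\min_{i \in \mathcal{A}_{1+}(x)} x_{2,i}$ and $\min_{i \in \mathcal{A}_{2+}(x)} x_{1,i}$ (both strictly positive by the definition of the $+$-sets; the condition is vacuous if the corresponding set is empty) and call the resulting direction $d^*$.

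Next I would translate feasibility of $d^*$ into the sign and equality conditions of~\eqref{E:NonOpte}. The bounds $\ell_0 \le x_0 + d_0^* \le u_0$ give $d_{0,i}^* \ge 0$ on $\mathcal{A}_{0\ell}(x)$ and $d_{0,i}^* \le 0$ on $\mathcal{A}_{0u}(x)$. For $i \in \mathcal{A}_{1+}(x)$ the choice of $\delta$ guarantees $x_{2,i} + d_{2,i}^* > 0$, so the complementarity condition $(x_{1,i} + d_{1,i}^*)(x_{2,i} + d_{2,i}^*) = 0$ forces $d_{1,i}^* = 0$; symmetrically $d_{2,i}^* = 0$ on $\mathcal{A}_{2+}(x)$. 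For $i \in \mathcal{D}(x)$, where $x_{1,i} = x_{2,i} = 0$, the LPCC constraints collapse to $d_{1,i}^* \ge 0$, $d_{2,i}^* \ge 0$, and $d_{1,i}^* d_{2,i}^* = 0$.

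Finally I would construct the partition and the unit direction. Setting $\mathcal{D}_1 \coloneqq \{i \in \mathcal{D}(x) : d_{1,i}^* = 0\}$ and $\mathcal{D}_2 \coloneqq \mathcal{D}(x) \setminus \mathcal{D}_1$ yields a valid partition of $\mathcal{D}(x)$: for $i \in \mathcal{D}_2$ one has $d_{1,i}^* > 0$, which combined with $d_{1,i}^* d_{2,i}^* = 0$ gives $d_{2,i}^* = 0$, matching~\eqref{E:NonOpte} on $\mathcal{D}_2$. Since $\nabla f(x)^T d^* < 0$ forces $d^* \neq 0$, I would set $s \coloneqq d^*/\|d^*\|_\infty$ and $\varepsilon \coloneqq -\nabla f(x)^T s$, so that $\|s\|_\infty = 1$, $\varepsilon > 0$, and all sign and equality conditions are preserved under positive scaling. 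I do not expect a substantive obstacle: the only care needed is in the choice of $\delta$, to prevent the strictly complementary indices in $\mathcal{A}_{1+}(x) \cup \mathcal{A}_{2+}(x)$ from ``switching branches'' of the complementarity, and in the (harmless) arbitrary assignment of ties $d_{1,i}^* = d_{2,i}^* = 0$ to either half of the partition.
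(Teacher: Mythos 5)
Your proof is correct, and it is essentially the paper's argument carried out in full: the paper disposes of this proposition by citing the branch-problem (piecewise-LP) reformulation of the LPCC~\eqref{eq:bstat_eqn} from Luo--Pang--Ralph, and your construction of the partition $({\cal D}_1,{\cal D}_2)$ from a small feasible descent direction is precisely the identification of the branch in which that direction lives. The one point worth being careful about --- shrinking $\delta$ below $\min_{i\in{\cal A}_{1+}(x)} x_{2,i}$ and $\min_{i\in{\cal A}_{2+}(x)} x_{1,i}$ so that strictly complementary pairs cannot switch branches --- is handled correctly.
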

\begin{proof}
This result follows from equivalent reformulations of the LPCC 
\eqref{eq:bstat_eqn}, which is used to define B-stationarity, by considering
branch problems per component, see also~\cite[Corollary 3.3.1]{LuoPanRal:96}
and~\cite[Theorem 3.3.4]{LuoPanRal:96}.
\end{proof}

The conditions on the entries in $\mathcal{A}_{0\ell}(x)$, ${\cal A}_{0u}(x)$,
${\cal A}_{1+}(x)$, ${\cal A}_{2+}(x)$, ${\cal D}_{1}$, and ${\cal D}_{2}$ in
\eqref{E:NonOpte} ensure the existence of a direction $s$ that points into the
feasible set while $\nabla f(x)^T s \le -\varepsilon$
gives a means to identify that $x$ is not B-stationary.
We will exploit this existence of feasible descent directions from nonoptimal points
in our convergence analysis.
Note that an expensive enumeration of partitions
is not necessary in practice because the results from Section~\ref{sec:efficient_lpec_solution} show that we need to solve only 
$2 | {\cal D}(x^*)|$ LPs.

\subsection{Main Convergence Result}

To derive our convergence results, we make the following assumption
on the MPCC problem.

\begin{assumption}\label{A:C2}
The function $f$ is \revised{R2.5}{continuously differentiable, and $\nabla f$
is locally Lipschitz continuous}.
\end{assumption}

Our main convergence result shows that the SLPCC algorithm generates a
subsequence that converges to a B-stationary point.

\begin{theorem}\label{T:main}
Let Assumption~\ref{A:C2} hold,
let $\sigma \in (0,1)$ be fixed, and let $x^{0}$ be feasible for
\eqref{E:mpcc}. Then one of the following mutually exclusive outcomes must occur.
\begin{description}
  \item[O1] Algorithm~\ref{SLPCC} terminates at a B-stationary point; that is, $d=0$ solves
    the subproblem \LPCC$(\xk,\Delta^{k,l})$ for some $k$
    and $l$.
  \item[O2] Algorithm~\ref{SLPCC} generates an infinite sequence of iterates $\{ x^k \}$ with decreasing objective values.
If this sequence has an accumulation point,
then any such accumulation point is feasible
and B-stationary.
\end{description}
\end{theorem}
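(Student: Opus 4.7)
The two outcomes are mutually exclusive, since O1 corresponds to finite termination at some index $(k,l)$ with $d=0$ optimal for $\LPCC(\xk,\trn)$. The plan is to establish three facts: (i) the inner loop terminates at every non-B-stationary iterate, so that an infinite sequence is well-defined in O2; (ii) $\{f(\xk)\}$ is strictly decreasing and every accumulation point is feasible; and (iii) every accumulation point is B-stationary, which is shown by contradiction.

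For (i) and (ii), apply Proposition~\ref{P:non_opt_descent} at a non-B-stationary $\xk$ to obtain $\varepsilon > 0$ and $s$ with $\|s\|_\infty = 1$, $\nabla f(\xk)^T s \le -\varepsilon$, satisfying the linear-feasibility conditions~\eqref{E:NonOpte}. Then $\trn s$ is feasible for $\LPCC(\xk,\trn)$ for all sufficiently small $\trn$, so $-\nabla f(\xk)^T \dn \ge \varepsilon\,\trn$. Combining with the Lipschitz estimate $|f(\xk+\dn) - f(\xk) - \nabla f(\xk)^T \dn| \le \tfrac{Ln}{2}\trn^2$ afforded by Assumption~\ref{A:C2} and $\|\dn\|_\infty \le \trn$ yields
\[
\rho^{k,l} \;\ge\; 1 - \frac{Ln\,\trn}{2\varepsilon},
\]
so the acceptance test $\rho^{k,l} \ge \sigma$ holds once $\trn \le 2\varepsilon(1-\sigma)/(Ln)$; the inner loop therefore terminates after finitely many halvings of $\trn$. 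In particular $f(\xk) - f(x^{k+1}) \ge \sigma(-\nabla f(\xk)^T \dn) > 0$, so the objective sequence is strictly decreasing. The feasible set of~\eqref{E:mpcc} is closed and every iterate is feasible by construction, so any accumulation point $x^*$ is feasible, and continuity of $f$ combined with monotone decrease yields $f(\xk)\downarrow f(x^*)$; the acceptance inequality then forces $-\nabla f(\xk)^T \dn \to 0$.

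For (iii), assume $x^{k_j} \to x^*$ with $x^*$ not B-stationary. Proposition~\ref{P:non_opt_descent} at $x^*$ supplies $\varepsilon_* > 0$, a direction $s^*$ with $\|s^*\|_\infty = 1$ and $\nabla f(x^*)^T s^* \le -\varepsilon_*$, and a partition $(\mathcal{D}_1^*,\mathcal{D}_2^*)$ of $\mathcal{D}(x^*)$. Passing to a subsequence, the active sets $\mathcal{A}_{0\ell}(x^{k_j})$, $\mathcal{A}_{0u}(x^{k_j})$, $\mathcal{A}_{1+}(x^{k_j})$, $\mathcal{A}_{2+}(x^{k_j})$, $\mathcal{D}(x^{k_j})$ stabilize in $j$, and by continuity each is a subset of the analogous set at $x^*$. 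The goal is to construct $s^{k_j}$ satisfying~\eqref{E:NonOpte} at $x^{k_j}$ with $\|s^{k_j}\|_\infty \le 1$, $\nabla f(x^{k_j})^T s^{k_j} \le -\varepsilon_*/2$, and a uniform positive lower bound $\bar t_* > 0$ on the largest $t$ for which $x^{k_j} + ts^{k_j}$ is feasible. Granted this, the inner-loop analysis from the previous paragraph applied uniformly on a neighborhood of $x^*$ yields a positive lower bound on the accepted $\trn$, hence $-\nabla f(x^{k_j})^T d^{k_j} \ge c_* > 0$ uniformly in $j$, contradicting $-\nabla f(\xk)^T \dn \to 0$.

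The hardest step is the construction of $s^{k_j}$. The disjunctive structure of~\eqref{E:mpcc} permits active-set jumps in which a biactive complementarity $i \in \mathcal{D}(x^*)$ becomes strictly complementary at $x^{k_j}$ (in either the $x_1$ or $x_2$ component), which can invalidate the sign conditions in~\eqref{E:NonOpte} at $x^{k_j}$ for the nonzero coordinates of $s^*$ there. Rescuing descent requires reallocating the affected components of $s^*$ onto the disjunctive branch induced by $x^{k_j}$ and exploiting $(x^{k_j}_{1,i},x^{k_j}_{2,i}) \to (0,0)$ on those coordinates, so that the descent lost in the reallocation is bounded by a quantity vanishing with $\|x^{k_j} - x^*\|$ and the uniform bound $\nabla f(x^{k_j})^T s^{k_j} \le -\varepsilon_*/2$ is preserved for $j$ large.
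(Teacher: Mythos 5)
Your overall architecture matches the paper's: finite termination of the inner loop via the Taylor/sufficient-decrease estimate (this is the paper's Lemma~\ref{L:Taylor} and Lemma~\ref{L:finite}), monotone decrease and feasibility of accumulation points, and a contradiction at a non-B-stationary accumulation point obtained from a uniform positive lower bound on the predicted reduction (the paper telescopes to $f(\xk)\to-\infty$; your route via $-\nabla f(\xk)^T\dn\to 0$ is equivalent). The gap is in the step you yourself flag as hardest, and the resolution you sketch does not work. You seek a \emph{direction} $s^{k_j}$ satisfying~\eqref{E:NonOpte} \emph{at} $x^{k_j}$ with $\nabla f(x^{k_j})^T s^{k_j}\le -\varepsilon_*/2$, repaired by ``reallocating'' the components of $s^*$ on branch-switched coordinates onto the branch active at $x^{k_j}$, with a loss vanishing as $x^{k_j}\to x^*$. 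Neither claim holds. Take $n_0=0$, $n_1=1$, $x^*=(0,0)$, $\nabla f(x^*)=(-1,0)$, and $x^{k_j}=(0,\epsilon_j)$ with $\epsilon_j\downarrow 0$. Then $x^*$ is not B-stationary ($s^*=(1,0)$ gives $\nabla f(x^*)^Ts^*=-1$), but $1\in{\cal A}_{1+}(x^{k_j})$ forces $s^{k_j}_{1}=0$ in~\eqref{E:NonOpte}, so every admissible direction at $x^{k_j}$ has $\nabla f(x^{k_j})^Ts^{k_j}\to 0$: no choice achieves $\le-\varepsilon_*/2$, and the descent lost by moving the mass of $s^*_1$ onto the $x_2$-branch is $|\nabla f(x^*)_1|\cdot|s^*_1|=1$, an $O(1)$ quantity rather than one vanishing with $\|x^{k_j}-x^*\|$. (Relatedly, your claim that each active set at $x^{k_j}$ is contained in its analogue at $x^*$ fails for ${\cal A}_{1+}$ and ${\cal A}_{2+}$ on exactly these coordinates.)

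The descent available at $x^*$ cannot, in general, be captured by any single ray from $x^{k_j}$ lying in one branch; it must be captured by a point of the LPCC feasible set on the \emph{other} branch. This is what the paper's Lemma~\ref{L:Fstep} supplies: the composite step $d=\widehat{p}+\Delta s$ of~\eqref{eq:d_def}, where $\widehat{p}$ first projects $x^{k_j}$ onto the activity pattern of the limit point (pivoting the nearly-biactive coordinates through the kink) and $\Delta s$ is then applied from $\widehat{x}$. In the example above this is $d=(\Delta,-\epsilon_j)$, which is feasible for LPCC$(x^{k_j},\Delta)$ with $\nabla f(x^{k_j})^T d\approx-\Delta$. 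The quantity that vanishes is $\|\widehat{p}\|_\infty\le\|x^{k_j}-x^*\|_\infty$, costing only $L\|\widehat{p}\|_\infty$ in the predicted reduction; the direction $s^*$ itself is never reallocated. With this construction in place of your step (iii) the argument closes and coincides with the paper's proof; as written, step (iii) is unproven.
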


This theorem is proven after Lemma~\ref{L:Fstep}. The outcomes {\bf O1} and {\bf O2} with a bounded sequence of iterates
correspond to normal asymptotics of the algorithm. If we make additional
assumptions on $f(x)$, such as  coercivity (that is, $f(x) \to \infty$ if
$\|x\|\to\infty$), then we can exclude the case that a subsequence of iterates
becomes unbounded. We prefer not to make such assumptions, however, and instead
detect unboundedness in our implementation by checking whether $f(\xk) \le -U$
for some large $U > 0$.

{\em Outline of Convergence Proof.}
First, in Lemma \ref{L:Taylor}, we revisit the relationship between the actual and
linear predicted reduction of the objective following from Taylor's theorem;
this result is used in the subsequent proofs.
Second, we show in Lemma \ref{L:finite} that the inner loop of Algorithm~\ref{SLPCC}
always terminates after finitely many iterations.
Consequently, Algorithm~\ref{SLPCC}
produces a sequence of feasible iterates that have decreasing objective
values because
\begin{equation*}
  f(\xk) - f(x^{k+1}) \ge - \sigma \nabla f(x^k)^T \dn > 0.
\end{equation*}
Because the feasible set of~\eqref{E:mpcc} is closed, all accumulation
points of the sequence $\xk$ are feasible. Therefore, it remains to prove that the
accumulations points are also B-stationary.
We show in Lemma \ref{L:Fstep} that in a neighborhood of a
feasible but not B-stationary point, the LPCC will eventually generate a step
that is accepted and implies a reduction in the objective that
is bounded from below by a multiple of the
trust-region radius.
Finally, we \revised{R1.1}{synthesize} these steps in Theorem~\ref{T:main}.

\subsection{SLPCC Convergence Proof}

As indicated above, we start with a well-known lemma
about the reduction implied by the LPCC step.
\begin{lemma}\label{L:Taylor}
  Let $f$ satisfy Assumption~\ref{A:C2}.
  Let $x \in \mathbb{R}^n$ and $r > 0$ be given, and let
  $d \in \mathbb{R}^n$ be such that
  $\|d\|_\infty \le r$. Define
  \[
  M \coloneqq \sup \Bigg\{ \frac{\|\nabla f(y) - \nabla f(z)\|_{\changed{1}}}{\|y - z\|_\infty}\,:\,
  \begin{aligned}  
  &y,z \in \R^n \text{ with } y \neq z\text{ and } \\
                           &\|x - y\|_\infty \le r\text{ and }
                           \|x - z\|_\infty \le \changed{2}r \end{aligned} \Bigg\}.
 \]
Then \changed{$M<\infty$,} and the linearly predicted reduction and actual reduction for $d$ satisfies
\begin{gather}\label{E:Taylor}
   f(\changed{y}) - f(\changed{y} + d) \ge  -\nabla f(\changed{y})^T d - \frac{1}{2}\|d\|_{\infty}^2 M
\end{gather}
\changed{for all $y$ with $\|x - y\|_\infty \le r$.}
\end{lemma}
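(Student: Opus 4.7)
The plan is to establish finiteness of $M$ first, and then obtain the inequality by applying the fundamental theorem of calculus and bounding the remainder via H\"older's inequality.

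For finiteness of $M$, I would observe that the closed $\ell_\infty$-ball $B \coloneqq \{z \in \R^n : \|x - z\|_\infty \le 2r\}$ is compact, and that $\nabla f$ is locally Lipschitz by Assumption~\ref{A:C2}. A standard compactness argument (covering $B$ by finitely many balls on which a local Lipschitz constant of $\nabla f$ exists in some norm, then using norm equivalence to pass between the $\ell_1$ and $\ell_\infty$ norms) then yields a single constant $L < \infty$ such that $\|\nabla f(y) - \nabla f(z)\|_1 \le L \|y - z\|_\infty$ for all $y, z \in B$. Since the supremum defining $M$ is taken over pairs $(y,z)$ both lying in $B$ (as $\|x-y\|_\infty \le r \le 2r$ and $\|x-z\|_\infty \le 2r$), it follows that $M \le L < \infty$.

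For the main inequality, I would fix $y$ with $\|x-y\|_\infty \le r$ and write, via the fundamental theorem of calculus,
\[
f(y+d) - f(y) - \nabla f(y)^T d = \int_0^1 \bigl[\nabla f(y + td) - \nabla f(y)\bigr]^T d\, \mathrm{d}t.
\]
By H\"older's inequality applied with the dual pair $(\ell_1,\ell_\infty)$, the integrand is bounded in absolute value by $\|\nabla f(y + td) - \nabla f(y)\|_1 \cdot \|d\|_\infty$. The crucial verification is that both $y$ and $y + td$ lie in the pair of balls used to define $M$: by hypothesis $\|x-y\|_\infty \le r$, and
\[
\|x - (y + td)\|_\infty \le \|x - y\|_\infty + t\|d\|_\infty \le r + tr \le 2r
\]
for all $t \in [0,1]$. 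Hence the definition of $M$ applies with the substitution $z = y + td$, giving $\|\nabla f(y+td) - \nabla f(y)\|_1 \le M \|td\|_\infty = tM\|d\|_\infty$.

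Combining these bounds and integrating,
\[
\left| \int_0^1 \bigl[\nabla f(y + td) - \nabla f(y)\bigr]^T d\, \mathrm{d}t \right|
\le \int_0^1 t M \|d\|_\infty^2 \, \mathrm{d}t = \tfrac{1}{2} M \|d\|_\infty^2,
\]
which rearranges to the claimed bound $f(y) - f(y+d) \ge -\nabla f(y)^T d - \tfrac{1}{2} \|d\|_\infty^2 M$. I do not anticipate real obstacles; the only thing requiring care is the norm bookkeeping, specifically (i) ensuring that $y$ and $y+td$ fall into the two different balls used in the definition of $M$ and (ii) pairing the $\ell_1$-norm on $\nabla f(y+td) - \nabla f(y)$ with the $\ell_\infty$-norm on $d$ in H\"older's inequality, which is precisely why $M$ is defined with this mixed-norm ratio.
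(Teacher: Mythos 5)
Your proof is correct and is precisely the ``standard arguments and Taylor's theorem'' that the paper's one-line proof invokes: finiteness of $M$ from local Lipschitz continuity of $\nabla f$ on the compact ball of radius $2r$, followed by the integral form of Taylor's theorem with the $(\ell_1,\ell_\infty)$ H\"older pairing and the check that $y$ and $y+td$ lie in the two balls defining $M$. Nothing to correct.
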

\begin{proof}
This result follows with standard arguments \changed{and} Taylor's theorem.
\end{proof}

Next, we employ Lemma \ref{E:Taylor} to prove
that the inner loop of Algorithm~\ref{SLPCC} always terminates
after finitely many iterations.

\begin{lemma}\label{L:finite}
Let Assumption~\ref{A:C2} hold. Then the
inner loop of Algorithm~\ref{SLPCC} terminates in a finite number of steps.
\end{lemma}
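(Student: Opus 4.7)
The plan is to split on whether $x^k$ is B-stationary. If $x^k$ is B-stationary, then by Definition \ref{DefBstat} the point $d = 0$ is a local minimizer of the trust-region-free LPCC \eqref{eq:bstat_eqn} about $x^k$, and since the added trust-region constraint $\|d\|_\infty \le \Delta^{k,l}$ is inactive at $d=0$, this $d=0$ remains a local minimizer of LPCC$(x^k,\Delta^{k,l})$ for every $\Delta^{k,l} > 0$. Hence the test in Line~\ref{ln:bstat_check} triggers on the very first inner iteration ($l = 0$), and the inner loop terminates.

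If $x^k$ is not B-stationary, I would apply Proposition~\ref{P:non_opt_descent} to obtain $\varepsilon > 0$ and a direction $s$ with $\|s\|_\infty = 1$, $\nabla f(x^k)^T s \le -\varepsilon$, and the sign conditions on entries of $s$ indexed by the various active-set partitions. The next step is to observe that for all sufficiently small $\Delta > 0$, the scaled direction $\Delta\,s$ is feasible for LPCC$(x^k,\Delta)$: the bound-constraint components stay in $[\ell_0,u_0]$ because $\ell_{0,i} < u_{0,i}$ and the sign conditions prevent crossing an active bound; the biactive components stay feasible because $s$ zeros one of $s_{1,i},s_{2,i}$ on each index of ${\cal D}(x^k)$ and keeps the other nonnegative; and for strictly complementary indices in ${\cal A}_{1+}(x^k)\cup{\cal A}_{2+}(x^k)$, the zero-entry conditions on $s$ keep the zero component at zero, while the positive component remains positive provided $\Delta$ is below a threshold depending on the minimum of those positive entries. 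Consequently, the optimal LPCC step $d^{k,l}$ satisfies
\[
\nabla f(x^k)^T d^{k,l} \;\le\; \Delta^{k,l}\,\nabla f(x^k)^T s \;\le\; -\varepsilon\,\Delta^{k,l}
\]
once $\Delta^{k,l}$ drops below that threshold.

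Combining this linear-descent estimate with Lemma~\ref{L:Taylor} applied at $x^k$ with radius $r = \Delta^{k,0}$ (say), and using $\|d^{k,l}\|_\infty \le \Delta^{k,l}$, the ratio computed in Line~\ref{ln:accept} obeys
\[
\rho^{k,l} \;\ge\; 1 - \frac{\tfrac{1}{2}\|d^{k,l}\|_\infty^2 M}{-\nabla f(x^k)^T d^{k,l}} \;\ge\; 1 - \frac{M\,\Delta^{k,l}}{2\varepsilon},
\]
where $M$ is the local Lipschitz constant from Lemma~\ref{L:Taylor}. This bound exceeds $\sigma$ whenever $\Delta^{k,l} \le 2(1-\sigma)\varepsilon/M$. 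Since $\Delta^{k,l+1} = \Delta^{k,l}/2$ in each rejected inner iteration, after finitely many reductions $\Delta^{k,l}$ will be small enough to satisfy both the feasibility-of-$\Delta s$ threshold and the ratio threshold simultaneously, at which point the step is accepted and the inner loop breaks out of Line~\ref{ln:accept}.

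The main obstacle is the first step of paragraph two: arguing precisely that $\Delta\,s$ is feasible for the LPCC for all small $\Delta$, because several pieces must be checked simultaneously (bounds on $x_0$, strict complementarity indices, and biactive indices under the chosen partition $({\cal D}_1,{\cal D}_2)$). Once this feasibility is in place, the rest is a routine trust-region ratio estimate via Lemma~\ref{L:Taylor} and the geometric shrinkage of $\Delta^{k,l}$.
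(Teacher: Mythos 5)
Your proposal is correct and follows essentially the same route as the paper's proof: split on B-stationarity, invoke Proposition~\ref{P:non_opt_descent} to get a feasible descent direction $s$ with $-\nabla f(x^k)^T(\Delta s)\ge \varepsilon\Delta$, and combine with Lemma~\ref{L:Taylor} to show the acceptance test passes once $\Delta^{k,l} \le 2(1-\sigma)\varepsilon/M$, which the halving rule guarantees after finitely many inner iterations. If anything, you are slightly more careful than the paper, which asserts feasibility of $\Delta^{k,l}s$ without noting that this requires $\Delta^{k,l}$ below a threshold tied to the inactive components; your handling of that point is the right one.
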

\begin{proof}
If $\xk$ is B-stationary, then $d=0$
solves LPCC$(\xk,\Delta)$ for any $\Delta > 0$,
and thus the subproblem solver chooses $\dn$ such
that $\nabla f(\xk)^T \dn = 0$ in the first iteration of the inner loop,
which yields a termination of the algorithm.

If $\xk$ is not B-stationary, then Proposition~\ref{P:non_opt_descent} guarantees that there exist
$s \in \mathbb{R}^n$ with $\|s\|_\infty = 1$
and $\varepsilon > 0$ such that
\[
-\nabla f(\xk)^T\dn \ge -\nabla f(\xk)^T 
(\trn s) \ge \trn \varepsilon,
\]
where the first inequality holds because 
$\dn$ is the solution
of a minimization problem for which
$\trn s$ is a feasible point.

The inner loop terminates when the sufficient reduction condition $f(\xk) -
f(\xk + \dn) \ge -\sigma f(\xk)^T \dn > 0$ is satisfied. For all $l \in
\mathbb{N}$, Lemma~\ref{L:Taylor} gives
\[ f(\xk) - f(\xk + \dn) \ge -\sigma\nabla f(\xk)^T \dn - (1 - \sigma)\nabla f(\xk)^T \dn - \frac{1}{2}\|\dn\|^2_\infty M \]
with $M$ as defined in Lemma \ref{E:Taylor} for the choice
$r = \Delta^{k,0}$. Because
\[
\|\dn\|_\infty \le \trn = \Delta^{k,0} / 2^{l},
\]
it follows that 
\[ f(\xk) - f(\xk + \dn) \ge - \sigma \nabla f(\xk)^T \dn \]
for $\trn < 2 (1 - \sigma) \varepsilon / M$
with the estimates from Lemma~\ref{L:Taylor}, and the inner iteration terminates finitely as soon as
$l$ is sufficiently large.
\end{proof}

Lemma~\ref{L:finite} implies that if Algorithm~\ref{SLPCC}
does not terminate with Outcome {\bf O1}, then it generates an
infinite sequence of iterates. If the iterates remain bounded,
then the sequence admits at least one accumulation point.

Next, we show that \revised{R2.7}{LPCC steps yield a reduction
of the objective that is bounded below by a fraction of the trust-region
radius near any feasible point that is not B-stationary.}

\begin{lemma}\label{L:Fstep}
Let $\sigma \in (0,1)$ and $f$ satisfy Assumption~\ref{A:C2}.
Let $x^{\infty}$ be feasible for~\eqref{E:mpcc} but not B-stationary. Then
there exist an $\varepsilon > 0$, a direction $s \in \R^n$
with \revised{R2.6}{$\|s\|_\infty=1$} and \revised{}{$\nabla f(x^k)^Ts\leq -\varepsilon$}, a relative neighborhood ${\cal N}^{\infty}$ of $x^{\infty}$, and constants $\eta > 0$ and $\kappa > 0$
such that for any sequence $\{\xk\} \subset {\cal N}^{\infty}$ with
$\xk \to x^{\infty}$,
the \LPCC$(\xk,\Delta)$ produces a descent direction $d^k$
for all $k$ sufficiently large that produces at least
a fraction of decrease as realized by $\Delta s$.
That is, there exists a sequence $\delta_k \to 0$ as
$\xk \to x^{\infty}$ such that
there is a feasible
$d^{k}$ for \LPCC$(\xk,\Delta)$ with
\begin{gather}\label{eq:suff_decrease_seq}
	f(\xk) - f(\xk + d^k) \ge
	-\sigma \nabla f(\xk)^T(\Delta s)
	\ge \sigma \varepsilon \Delta\revised{R2.7}{,}
\end{gather}
\changed{and
\begin{gather}\label{eq:step_accept_seq}
	f(\xk) - f(\xk + d^k) \ge
	-\sigma \nabla f(\xk)^T d^k
\end{gather}}
for all trust region radii $\Delta$ satisfying

\begin{gather}\label{eq:rho_bounds}
\delta_k \eta \le \Delta \le \kappa.
\end{gather}
Furthermore, for all $k$ sufficiently large,
the interval $[\delta_k\eta,\kappa]$ in
\eqref{eq:rho_bounds} is nonempty.
\end{lemma}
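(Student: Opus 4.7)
The plan is to apply Proposition~\ref{P:non_opt_descent} at $x^\infty$ to extract a unit direction $s$ with $\nabla f(x^\infty)^T s \le -2\varepsilon$ for some $\varepsilon>0$ and a partition $(\mathcal{D}_1,\mathcal{D}_2)$ of the biactive set $\mathcal{D}(x^\infty)$. Near $x^\infty$, the step $\Delta s$ is generally \emph{not} feasible for $\LPCC(x^k,\Delta)$ because at biactive indices of $x^\infty$ the iterate $x^k$ may collapse onto the ``wrong'' branch of the complementarity condition. I will therefore construct a surrogate step $\tilde d^k$ that \emph{is} feasible for $\LPCC(x^k,\Delta)$ and differs from $\Delta s$ by only $O(\|x^k-x^\infty\|_\infty)$. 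Combining optimality of the LPCC solution $d^k$ against $\tilde d^k$ with Lemma~\ref{L:Taylor} will then yield~\eqref{eq:suff_decrease_seq} and~\eqref{eq:step_accept_seq} simultaneously.

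First, I would shrink $\mathcal{N}^\infty$ so that on it (i) every strictly inactive bound and every strictly positive coordinate of $(x_1,x_2)$ at $x^\infty$ retains a uniform positive slack $c>0$, (ii) $\nabla f$ is Lipschitz with some constant $M$ and bounded in $\|\cdot\|_1$ by some constant $C_0$, and (iii) $\nabla f(x)^T s \le -\varepsilon$. Setting $\delta_k\coloneqq\|x^k-x^\infty\|_\infty$, we have $\delta_k\to 0$.

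Next, I define $\tilde d^k$ componentwise. On all indices outside $\mathcal{D}(x^\infty)$ I take $\tilde d^k_{\cdot,i}\coloneqq \Delta s_{\cdot,i}$; feasibility then follows from (i) once $\Delta\le c$. On a biactive index $i\in\mathcal{D}_1$, Proposition~\ref{P:non_opt_descent} gives $s_{1,i}=0$ and $s_{2,i}\ge 0$; feasibility of $x^k$ forces $x^k_{1,i}=0$ or $x^k_{2,i}=0$. If $x^k_{1,i}=0$ I keep $(\tilde d^k_{1,i},\tilde d^k_{2,i})=(0,\Delta s_{2,i})$; if instead $x^k_{2,i}=0$ I set $(\tilde d^k_{1,i},\tilde d^k_{2,i})=(-x^k_{1,i},\Delta s_{2,i})$, which moves $x_{1,i}^k$ to $0$ and is feasible. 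A symmetric recipe handles $i\in\mathcal{D}_2$. By construction $\|\tilde d^k-\Delta s\|_\infty\le\delta_k$, and $\tilde d^k$ is feasible for $\LPCC(x^k,\Delta)$ whenever $\delta_k\le\Delta$ (so that $\|\tilde d^k\|_\infty\le\Delta$ is preserved).

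Finally, because $d^k$ solves $\LPCC(x^k,\Delta)$ we have $\nabla f(x^k)^Td^k\le\nabla f(x^k)^T\tilde d^k\le\nabla f(x^k)^T(\Delta s)+C_0\delta_k$, and Lemma~\ref{L:Taylor} then yields
\begin{equation*}
f(x^k)-f(x^k+d^k)\;\ge\;-\nabla f(x^k)^T(\Delta s)-C_0\delta_k-\tfrac{1}{2}\Delta^2 M.
\end{equation*}
By (iii), $-\nabla f(x^k)^T(\Delta s)\ge \Delta\varepsilon$; splitting the surplus $(1-\sigma)\Delta\varepsilon$ evenly between the two error terms gives the constants $\eta\coloneqq 2C_0/((1-\sigma)\varepsilon)$ and $\kappa\coloneqq (1-\sigma)\varepsilon/M$, and for every $\Delta\in[\delta_k\eta,\kappa]$ both~\eqref{eq:suff_decrease_seq} and~\eqref{eq:step_accept_seq} follow. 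The interval is eventually nonempty since $\delta_k\to 0$.

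The principal obstacle is the construction of $\tilde d^k$: it must simultaneously (a) respect the branch of the complementarity constraint that $x^k$ has chosen on each biactive index of $x^\infty$, (b) lie in the infinity-norm trust region of radius $\Delta$, and (c) differ from $\Delta s$ by only $O(\delta_k)$, uniformly in how the biactive ambiguity is resolved. Once this surrogate is in hand, the remaining Taylor and trust-region bookkeeping is routine.
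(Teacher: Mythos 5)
Your proposal is correct and follows essentially the same route as the paper's proof: your surrogate step $\tilde d^k$ is exactly the paper's $d=\hat p+\Delta s$ built from the projection onto the activities of $x^\infty$, and your constants $\eta=2C_0/((1-\sigma)\varepsilon)$ and $\kappa$ match the paper's $\eta=2L/((1-\sigma)\varepsilon)$ and $\kappa=\min\{\tfrac12 t_0,(1-\sigma)\varepsilon/(4M)\}$ up to the bookkeeping of whether Taylor's bound is applied to $d^k$ or to $\tilde d^k$. The only cosmetic fix is that your final $\kappa$ should also be capped by the feasibility radius $c$ (i.e., $\kappa\coloneqq\min\{c,(1-\sigma)\varepsilon/M\}$), which you clearly intend since you invoke ``$\Delta\le c$'' when arguing feasibility off the biactive set.
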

\begin{proof}
Because $x^{\infty}$ is not B-stationary,
Proposition~\ref{P:non_opt_descent} ensures the existence of
$\varepsilon_0 > 0$ and a direction $s$ with $\|s\|_\infty = 1$
such that $\nabla f(x^\infty)^T s < -\varepsilon_0$. Moreover, 
the characterization of $s$ given in~\eqref{E:NonOpte}
implies that one can choose $t_0 > 0$ sufficiently small
such that $x^\infty + t s$ is feasible for~\eqref{E:mpcc}
for all $0 \le t \le t_0$.
Next, we choose a relative neighborhood ${\cal N}^\infty$
of $x^\infty$. (By relative neighborhood we mean the intersection of the
neighborhood of $x^{\infty}$ in $\mathbb{R}^n$ with the feasible set of 
\eqref{E:mpcc}.)
Because Assumption~\ref{A:C2} implies that $\nabla f$ is a continuous 
function, we may choose ${\cal N}^\infty$ sufficiently small so that
it satisfies the following two properties.
\begin{enumerate}
\item There exists $0 < \varepsilon \le \varepsilon_0$ such that
$\nabla f(x)^T s \le -\varepsilon$ for all $x \in {\cal N}^\infty$.
\item $\|x - x^\infty\|_\infty \le \frac{1}{2} t_0$ for all $x \in {\cal N}^\infty$.
\end{enumerate}

Let $\Delta \le \frac{1}{2} t_0$. The next step of the
proof is to construct a feasible point $d$ for LPCC($x,\Delta$) for a given
$x \in {\cal N}^\infty \cap \{\xi \in \mathbb{R}^n \,:\, \|\xi - x^\infty\|_\infty
\le \Delta\}$. Afterwards, we will use the characterization
to obtain the estimate~\eqref{eq:suff_decrease_seq} under the
condition~\eqref{eq:rho_bounds} on $\Delta$.

To this end, we consider the active sets ${\cal A}_0(x^\infty)$, ${\cal
A}_{1+}(x^\infty)$, ${\cal A}_{2+}(x^\infty)$, and ${\cal D}(x^\infty)$ from 
Section~\ref{sec:proof_prelims} and the decomposition $({\cal D}_1^\infty,{\cal
D}_2^\infty)$ of ${\cal D}(x^\infty)$ that exists by virtue of
Proposition~\ref{P:non_opt_descent}.
We highlight that the active sets may differ between $x^\infty$ and $x$,
but this analysis requires the sets at $x^\infty$.

We construct $d$ by combining a step from
$x \in {\cal N}^\infty \cap
\{\xi \in \mathbb{R}^n \,:\, \|\xi - x^\infty\|_\infty \le \Delta\}$
towards the activities defined at $x^{\infty}$ and a step of length
$\Delta$. We start by defining the projection onto the activities
\[ \widehat{x}_{1,i} \coloneqq \left\{ \begin{array}{ll}
	x_{1,i} \quad & \text{if} \;
	i \in {\cal A}_{2+}(x^\infty) \cup {\cal D}_2^\infty, \\
    0 &  \text{if} \; i \in {\cal D}_1^\infty \cup {\cal A}_{1+}(x^\infty),
    \end{array}\right.\]
and similarly for $\widehat{x}_{2}$, and we set
$\widehat{x}_0 \coloneqq x_{0}$. Then it follows that
\[ \widehat{p} \coloneqq \widehat{x} - x \]
is the orthogonal projection of $x$ onto the degenerate indices.
We construct the step
\begin{equation}\label{eq:d_def}
  d \coloneqq \widehat{p} + \Delta s. 
\end{equation}
The step $d$ is feasible for LPCC($x,\Delta$) by construction
of $\hat{p}$, $\Delta \le \frac{1}{2} t_0$,
and the second property of ${\cal N}^\infty$.
The choices of $\hat{x}_{1,i}$ and
$\hat{x}_{2,i}$ are made so that a pivoting of the
inactive coordinates can happen by adding $\Delta s$
to $\hat{x} = x + \hat{p}$ if this is necessary
to obtain the descent following from the direction $s$.
The feasibility follows from the fact that the projection
gives a step of at most $\Delta$ in an inactive coordinate
of $x$. If this step is nonzero, the inactive coordinate is pivoted at the kink
and a step of length of at most $\Delta$ is added to the new inactive coordinate.
If this step was zero, the inactive coordinate does not change
and a step of length of at most $\Delta$ is added to the
inactive coordinate. In both cases, the $x + d$ stays in the
$\ell_\infty$-ball of radius $\Delta$ around $x$.

To visualize this construction of $d$,
Figure~\ref{fig:project_to_bstat_cases} shows sketches
of the three situations that can occur for a pair
of coordinates $x_{1,i}$, $x_{2,i}$
if one coordinate is strictly positive
and $x \in {\cal N}^\infty$. The rightmost sketch shows
how the solution of LPCC$(x,\Delta)$ can detect
a descent direction for the point $x^\infty$
even if the descent is not available at $x$
by virtue of the intermediate projection of $x$ to $\hat{x}$.

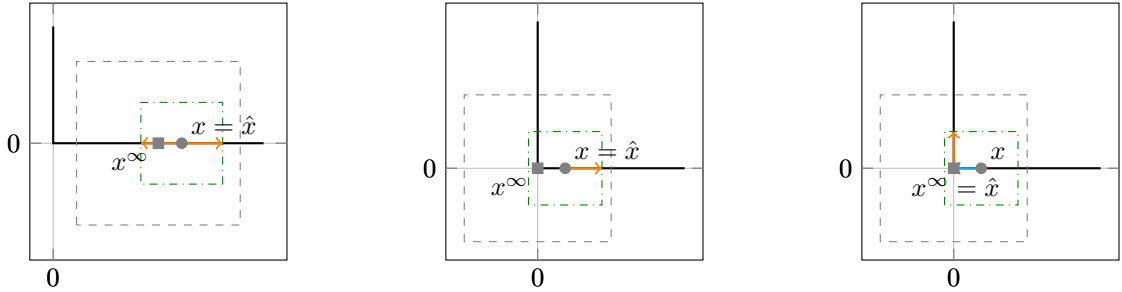
\begin{figure}[h]
\minipage{0.33\textwidth}
\centering
\begin{tikzpicture}
\begin{axis}[height=5cm, xmin=-.1, xmax=1.0, ymin=-.5, ymax=.6, xtick={0}, ytick={0}, extra x ticks={0},
extra y ticks={0}, extra tick style={grid=major},
unit vector ratio={1 1}]
	\addplot[mark=none, thick] coordinates { (0,.5) (0,0) (.9,0) };

	\addplot[mark=none, dashed, gray] coordinates 
      { (0.1,-0.35) (0.1,0.35) (0.8,0.35) (0.8,-0.35) (0.1,-0.35) };	
	\addplot[mark=none, dash dot, darkgreen] coordinates	
	  { (0.725,-0.175) (0.725,0.175) (0.375,0.175) (0.375,-0.175) (0.725,-0.175) };	
	\addplot[mark=square*,gray] coordinates { (0.45,0.) };	
	\node[below left] at (axis cs:0.45,0) {$x^\infty$};	
	\addplot[mark=*,gray] coordinates { (0.55,0.) };	
	\node[above right] at (axis cs:0.55,0) {$x = \hat{x}$};
	\draw[<->,thick,orange] (axis cs:0.375,0) -- (axis cs:0.725,0);	
\end{axis}
\end{tikzpicture}
\endminipage\hfill
\minipage{0.33\textwidth}
\centering
\begin{tikzpicture}
\begin{axis}[height=5cm, xmin=-.5, xmax=0.9, ymin=-.5, ymax=0.9, xtick={0}, ytick={0}, extra x ticks={0},
extra y ticks={0}, extra tick style={grid=major},
unit vector ratio={1 1}]
	\addplot[mark=none, thick] coordinates { (0,.8) (0,0) (.8,0) };
	\addplot[mark=none, dashed, gray] coordinates
	  { (-0.4,-0.4) (-0.4,0.4) (0.4,0.4) (0.4,-0.4) (-0.4,-0.4) };
	\addplot[mark=none, dash dot, darkgreen] coordinates	
	  { (-0.05,-0.2) (-0.05,0.2) (0.35,0.2) (0.35,-0.2) (-0.05,-0.2) };	  	
	\addplot[mark=square*,gray] coordinates { (0.,0.) };	
	\node[below left] at (axis cs:0.0,0) {$x^\infty$};	
	\addplot[mark=*,gray] coordinates { (0.15,0.) };	
	\node[above right] at (axis cs:0.15,0) {$x = \hat{x}$};
	\draw[->,thick,orange] (axis cs:0.15,0) -- (axis cs:0.35,0);	
\end{axis}
\end{tikzpicture}
\endminipage\hfill
\minipage{0.33\textwidth}
\centering
\begin{tikzpicture}
\begin{axis}[height=5cm, xmin=-.5, xmax=0.9, ymin=-.5, ymax=0.9, xtick={0}, ytick={0}, extra x ticks={0},
extra y ticks={0}, extra tick style={grid=major},
unit vector ratio={1 1}]
	\addplot[mark=none, thick] coordinates { (0,.8) (0,0) (.8,0) };
	\addplot[mark=none, dashed, gray] coordinates
	  { (-0.4,-0.4) (-0.4,0.4) (0.4,0.4) (0.4,-0.4) (-0.4,-0.4) };
	\addplot[mark=none, dash dot, darkgreen] coordinates	
	  { (-0.05,-0.2) (-0.05,0.2) (0.35,0.2) (0.35,-0.2) (-0.05,-0.2) };	  	
	\addplot[mark=square*,gray] coordinates { (0.,0.) };	
	\node[below left] at (axis cs:0.30,0) {$x^\infty = \hat{x}$};	
	\addplot[mark=*,gray] coordinates { (0.15,0.) };	
	\node[above right] at (axis cs:0.15,0) {$x$};
	\draw[->,thick,cyan] (axis cs:0.15,0) -- (axis cs:0.,0.);		
	\draw[->,thick,orange] (axis cs:0,0) -- (axis cs:0.,0.2);
\end{axis}
\end{tikzpicture}
\endminipage
\caption{Three configurations of $x^\infty$ (square),
$x \in {\cal N}^\infty$ (circle), and
$\hat{x}$ in two dimensions, where
one coordinate of $x$ is strictly positive.
The max-norm ball of radius $t_0$ around $x^\infty$
is depicted with a dashed line (gray).
The max-norm ball of radius $.5 t_0$ around $x$
is depicted with a dashed-dotted line (green).
If available, the vector $\hat{p}$ originating from
$x$ is depicted in blue.
The possible vectors $\Delta s$ 
with $\Delta = \frac{1}{2} t_0$ originating from
$\hat{x}$ are depicted in orange.}\label{fig:project_to_bstat_cases}
\end{figure}

Next, we show the estimate~\eqref{eq:suff_decrease_seq} holds
under the condition~\eqref{eq:rho_bounds} on $\Delta$ for $d$. Afterwards, we
transfer the result to the sequence $\xk \to x^\infty$
and show that the condition on $\|\hat{p}\|_\infty$ is satisfied for $k$
sufficiently large.
We choose $\delta 
= \|\widehat{p}\|_\infty$ and $L \coloneqq \sup \{ \|\nabla f(\xi)\|_1 : \xi \in {\cal N}^\infty \}$, and obtain the estimate
\begin{align}
f(x) - f(x + d)
&\ge - \nabla f(x)^T d  - \frac{1}{2} \|d\|^2_\infty M \nonumber \\
&\ge - \nabla f(x)^T (\Delta s) -\nabla f(x)^T \widehat{p} - (\Delta)^2 M - \delta^2 M\label{eq:ie_w_pres} \\
&\ge - \sigma \nabla f(x)^T (\Delta s) - (1 - \sigma) \nabla f(x)^T (\Delta s) - L \delta - (\Delta)^2 M - \delta^2 M, \label{eq:ie_w_s}
\end{align}
where the first inequality follows from Lemma~\ref{L:Taylor} with
$M$ computed for the choices $x = x^\infty$ and $r = \sup\{ \|\xi -
x^\infty\|_\infty : \xi \in {\cal N}^\infty \}$. The second inequality follows
from the definition of $d$ in~\eqref{eq:d_def}, the triangle inequality, and
the inequality $(a + b)^2 \le 2a^2 + 2b^2$ for real numbers $a$ and $b$. The
third inequality follows from the Cauchy--Schwarz inequality and the
definitions of $L$ and $\delta$.

If the condition $\Delta \le (1 - \sigma)\varepsilon / (4M)$ holds, then it follows that
\[ (\Delta)^2 M + \delta^2 M \le 2(\Delta)^2 M \le \frac{\Delta (1 - \sigma)\varepsilon}{2} \le -\frac{1}{2}(1 - \sigma) \nabla f(x)^T (\Delta s), \]
where the first inequality follows from
$\delta = \|\hat{p}\|_\infty \le \Delta$ by construction and
the last inequality holds because $0 < \varepsilon \le - \nabla f(x)^T s$.

Moreover, if $\Delta \ge 2 \delta L / ((1 - \sigma)\varepsilon)$
holds, then it follows that
\[ L\delta \le \frac{\Delta (1 - \sigma)\varepsilon}{2} \le -\frac{1}{2}(1 - \sigma) \nabla f(x)^T (\Delta s). \]
Thus, with the definitions
\[ \eta \coloneqq \frac{2L}{(1 - \sigma)\varepsilon}
\enskip\text{ and }\enskip
\kappa \coloneqq \min\left\{\frac{1}{2} t_0, \frac{(1 - \sigma)\varepsilon}{4M}\right\}, \] 
the analysis above implies that
if $\Delta \in [\eta\delta,\kappa]$, then 
\begin{gather}\label{eq:suff_decrease_noseq}
	f(x) - f(x + d) \ge - \sigma \nabla f(x)^T (\Delta s)
\ge \sigma \varepsilon \Delta.
\end{gather}
\revised{R2.7}{Moreover,
\begin{gather}\label{eq:step_accept_noseq}
	f(x) - f(x + d) \ge - \sigma \nabla f(x)^T d
\end{gather}
follows by replacing the terms $\sigma \nabla f(x)^T (\Delta s)$, 
$\nabla f(x)^T \widehat{p}$, 
and
$L\delta$ with $\sigma \nabla f(x)^T d$,
$(1 - \sigma)\nabla f(x)^T \widehat{p}$, and $(1-\sigma)L\delta$ in \eqref{eq:ie_w_pres}, and
\eqref{eq:ie_w_s}.
Note that \eqref{eq:step_accept_noseq} already
holds if $\Delta \in [(1 - \sigma)\eta\delta,\kappa]$.}

Finally, we start from the obtained estimate
\eqref{eq:suff_decrease_noseq} from $x \in {\cal N}^\infty$
to prove the claim for a sequence $\xk \to x^\infty$ in ${\cal N}^\infty$.
To this end, we consider a sequence $\xk \to x^\infty$ in
${\cal N}^\infty$ and inspect the quantities introduced above
for the choices $x = \xk$ and denote them with the superscript
$k$. For all $0 < \Delta \le \kappa$, there exists $k_0 \in \mathbb{N}$
such that for all $k \ge k_0$ it holds that
$\delta^k = \|\widehat{p}^k\|_\infty \le \|x^\infty - x^{k}\|_\infty
\le \Delta$ by construction of $\widehat{p}^k$ and the fact that
$\|x^\infty - x^{k}\|_\infty \to 0$.

Thus,~\eqref{eq:suff_decrease_noseq} \revised{R2.7}{and \eqref{eq:step_accept_noseq}
imply} that \eqref{eq:suff_decrease_seq} \changed{and \eqref{eq:step_accept_seq}
hold} for all $\Delta$ satisfying~\eqref{eq:rho_bounds}
because $\eta$ and $\kappa$ are constant for the neighborhood
${\cal N}^\infty$ while $\xk \to x^\infty$ provides \changed{that $\delta^k \to 0$, and hence} that
$[\delta^k\eta,\kappa]$ is nonempty eventually.
\end{proof}

By construction, the lower bound $\eta\delta^k$
of the interval~\eqref{eq:rho_bounds}
converges to zero as $\xk \to x^{\infty}$, while the upper bound of this
interval is bounded away from zero. By virtue of Lemma~\ref{L:Fstep},
$\varepsilon$ depends only on $x^\infty$ and
is independent of $k$.

Hence, \revised{R2.7}{the iterates cannot converge to a non-B-stationary point 
$x^{\infty}$ because Lemma \ref{L:Fstep} shows that sufficiently small steps
in a neighborhood of $x^{\infty}$ can always be accepted.
These steps satisfy a sufficient reduction condition bounded below by the
trust-region radius so that the algorithm must select steps that are acceptable
and eventually improve over $x^\infty$.}

We are now in a position to prove
our main convergence result, Theorem~\ref{T:main},
which in particular shows that every accumulation point is B-stationary.

\begin{proof}[Proof of Theorem~\ref{T:main}]
We need to consider only Outcome {\bf O2}, because in the other case we obtain
a B-stationary point by virtue of the finite termination condition. We deduce
inductively over the iterations that $\xk$ is feasible for~\eqref{E:mpcc} if
the inner loop terminates finitely for every $k$ because if $\dn$ is feasible
for LPCC$(\xk,\trn)$, it follows that $x^{k+1} = \xk + \dn$ is feasible for
\eqref{E:mpcc}. Steps are accepted only when there is a reduction in the
objective. Thus all iterations reduce the objective function if the inner loop
terminates finitely. The inner loop terminates finitely by
Lemma~\ref{L:finite}.

It remains to show that every accumulation point of the sequence of iterates is B-stationary.
We seek a contradiction and assume that an accumulation point
of the sequence $x^{\infty}$ is not B-stationary.
To ease the notation, we denote the approximating subsequence by the same
symbol, that is, $\xk \to x^\infty$. Moreover, we further restrict ourselves to
a subsequence such that its elements are in the neighborhood ${\cal N}^\infty$
given by Lemma~\ref{L:Fstep}. Lemma~\ref{L:Fstep} implies that for any $\Delta$
in the range
\[ \delta_k \eta \le \Delta \le \kappa \]
the conditions for a step that is accepted
by Algorithm~\ref{SLPCC}, line~\ref{ln:accept},
and also leads to a linear reduction with respect
to $\Delta$ are satisfied. 
We note that the upper bound of this range is constant and that
$\delta_k \to 0$.
Thus, there exists $k_0 \in \mathbb{N}$ such that for
all $k \ge k_0$ it holds that $\delta_k \eta < \frac{1}{2}\kappa$
and $\delta_k < \frac{1}{2}\kappa\varepsilon / \sup\{ \|\nabla f(x)\|_1
\,|\, x \in {\cal N}^\infty \}$.
Moreover, Lemma~\ref{L:Fstep} also asserts the existence
of a fixed $\varepsilon > 0$ that enters the lower-bound
estimate of the reduction, which we use frequently below.

Next, we distinguish two cases. First, assume that in outer
iteration $k$ the inner loop accepts $\dn$ such that
$\|\dn\|_\infty = \trn > \kappa$.
Then, by virtue of Lemma~\ref{L:Fstep}, there exists another
step $\tilde{d}^k$ that
is the solution of
LPCC$(\xk,\kappa)$, which we can write as
$\tilde{d}^k = \Delta s + \hat{p}^k$
with $\|\hat{p}^k\|_{\infty} = \delta^k$,
see~\eqref{eq:d_def} in the proof 
of Lemma~\ref{L:Fstep}.
We obtain
\[ -\sigma \nabla f(\xk)^T \tilde{d}^k
= -\sigma \nabla f(\xk)^T (\kappa s + \hat{p}^k)
\ge \frac{1}{2}\varepsilon  \sigma \kappa, \]
where the inequality follows from
the estimate
$-\sigma \nabla f(\xk)^T \kappa s
\ge \varepsilon \sigma \kappa$
from Lemma~\ref{L:Fstep} and the estimate
$|\sigma \nabla f(\xk)^T \hat{p}^k|
\le \sigma \delta_k \|\nabla f(\xk)\|_1
\le \frac{1}{2}\sigma\kappa\varepsilon$
due to the choice of $k_0$. We deduce that
\begin{equation}\label{eq:suffRedn1}
f(\xk) - f(\xk + \dn) \ge -\sigma \nabla f(\xk)^T\dn
  \ge -\sigma \nabla f(\xk)^T \tilde{d}^k \ge
  \varepsilon \frac{1}{2} \sigma \kappa,
\end{equation}
where the first inequality follows from the acceptance
of the step $d^{k,l}$ and the second inequality
follows from the fact that $\tilde{d}^k$
is feasible for LPCC$(\xk,\trn)$ because $\tilde{d}^k$
is feasible for LPCC$(\xk,\kappa)$ and
$\kappa < \trn$.

Next, we assume that the inner loop accepts
$\dn$ such that $\|\dn\|_\infty = \trn \le \kappa$.
Since the upper bound $\overline{\Delta}^k$
on $\Delta^{k,0}$ is nondecreasing,
the interval $[\underline{\Delta},\overline{\Delta}^k]$
is never empty.
Starting from the reset trust region radius $\underline{\Delta} \le \Delta^{k,0}$,
the inner loop will eventually choose a trust region radius
$\trn \in \{\Delta^{k,0}, \Delta^{k,0}/2, \Delta^{k,0}/4, \ldots\}$.
We know that the sufficient reduction condition in
Algorithm~\ref{SLPCC}, line~\ref{ln:accept},
was not satisfied for $\trn  > \kappa$.
By virtue of Lemma~\ref{L:Fstep}, our choice of $k_0$
and the fact that
the inner loop always halves the tested trust region
radius, we obtain $\frac{1}{2} \kappa < \trn$ for the trust region radius
$\trn$ in inner loop $l$ that leads to acceptance in Algorithm~\ref{SLPCC},
line~\ref{ln:accept}. In this case we also obtain
\begin{equation}\label{eq:suffRedn2}
f(\xk) - f(\xk + \dn) \ge -\sigma \nabla f(\xk)^T(\trn s)
> \varepsilon \frac{1}{2} \sigma \kappa\revised{R2.8}{.}
\end{equation}
%

Combining~\eqref{eq:suffRedn1} and~\eqref{eq:suffRedn2}, we bound the actual reduction $f(\xk) - f(\xk + \dn)$
from below for all iterations $k \ge k_0$ of the considered subsequence.
It follows that
\begin{gather}\label{eq:diverging_sum} f(\xk) = f(x^{k_0})
- \sum_{m=k_0}^{k - 1} f(x^{m}) - f(x^{m + 1}) \le f(x_{k_0}) -  (k - k_0) \sigma \varepsilon \frac{\kappa}{2}  \to -\infty
\end{gather}
for $k \to \infty$.
This contradicts the fact that the iterates $\xk$ of the subsequence
converge to $x^\infty$, which implies $f(\xk) \to f(x^\infty) \in  \R$
because $f$ is continuous.
\end{proof}

\begin{remark}
A crucial ingredient of the proof is that the trust region
radius is reset in every iteration of the outer loop, which is different
from classical statements of trust region algorithms.
Consequently, whenever a subsequence starts to approach a non-B-stationary
point, the trust region radius cannot contract to zero, which 
implies that the subsequence eventually moves away from this point,
due to Lemma~\ref{L:Fstep}. A similar technique is used in the convergence
analysis in~\cite{FleLeyToi:02}.
\end{remark}

\section{Including Second-Order Information}\label{sec:bqp}

The subproblem~\eqref{eq:LPCC} uses only first-order information, resulting in
a step to the boundary of the trust region or a step to the kink of the
complementarity constraints, which generally result in slow convergence, as shown in
Figure~\ref{fig:slpecbqp_ft_slpec} for an example that is
described in Section~\ref{sec:example_slow}.
In this section, we discuss two ways to include second-order information in
Algorithm~\ref{SLPCC}. The first approach proposes a Cauchy line search
along a quadratic approximation, and the second approach adds an additional 
bound-constrained quadratic programming (BQP) step on the active set identified
by the LPCC. In this section, we assume that $f$ \revised{R2.5}{is twice 
continuously differentiable}.

\subsection{Cauchy Line Search with Quadratic Models}\label{sec:cauchy_line_search}

We first show how to perform a 
line search on a quadratic model in the direction of the LPCC step.
We use the second-order Taylor expansion to define the quadratic model
that is minimized.

\begin{figure}
\minipage{0.48\textwidth}
\centering
\begin{tikzpicture}
\begin{semilogyaxis}[height=6cm, xmin=0., xmax=4.5, ymin=4, ymax=10000, xtick={0}, extra x ticks={0,1,2,3,4,5}]
	\addplot[mark=*, very thick, darkgreen] coordinates { (0,5.1524e+03) 
	                                      (1,4.6316e+02)
	                                      (2,3.9115e+01)
                                          (3,1.2198e+01)};                                       
    \addlegendentry{plain}
	\addplot[mark=*] coordinates { (0,+5.1524e+03) 
	                               (1,4.6316e+02)
	                               (2,5.4809e+01)
                                   (3,1.3366e+01) 
                                   (4,1.2198e+01) };                                                                                
    \addlegendentry{cauchy}                   
\end{semilogyaxis}
\end{tikzpicture}
\endminipage\hfill\minipage{0.48\textwidth}
\centering
\begin{tikzpicture}
\begin{loglogaxis}[height=6cm, xmin=0.2, xmax=3000, ymin=4, ymax=10000]
\addplot[mark=*, very thick, darkgreen] coordinates { (0.2,5.152406250000000000e+03)
(1.000000000000000000e+00,2.537725694444444343e+03) };
\addplot[mark=*] coordinates { (0.2,5.152406250000000000e+03)
(1.000000000000000000e+00,2.537725694444444343e+03) };
\addplot[mark=*, very thick, darkgreen] table[x index=0,y index=1,col sep=comma] {./data/fvals_nash1aAL_plain.csv};
\addlegendentry{plain} 
\addplot[mark=*] table[x index=0,y index=1,col sep=comma] {./data/fvals_nash1aAL_csi.csv}; 
\addlegendentry{cauchy}                   
\end{loglogaxis}
\end{tikzpicture}
\endminipage
\caption{Convergence of the objective value
for the example \eqref{eqn:figure_4_example}
described in Section~\ref{sec:example_slow}
over the iterations of
Algorithm~\ref{SLPCC} with BQP acceleration
in ln.\ \ref{ln:bqp} (left) vs.
Algorithm~\ref{SLPCC} without BQP acceleration (right).
The convergence is plotted with
(\texttt{cauchy}, black) and without (\texttt{plain}, green) an optional Cauchy line search in ln.\ \ref{ln:cauchy} of Algorithm~\ref{SLPCC}.}
\label{fig:slpecbqp_ft_slpec}
\end{figure}
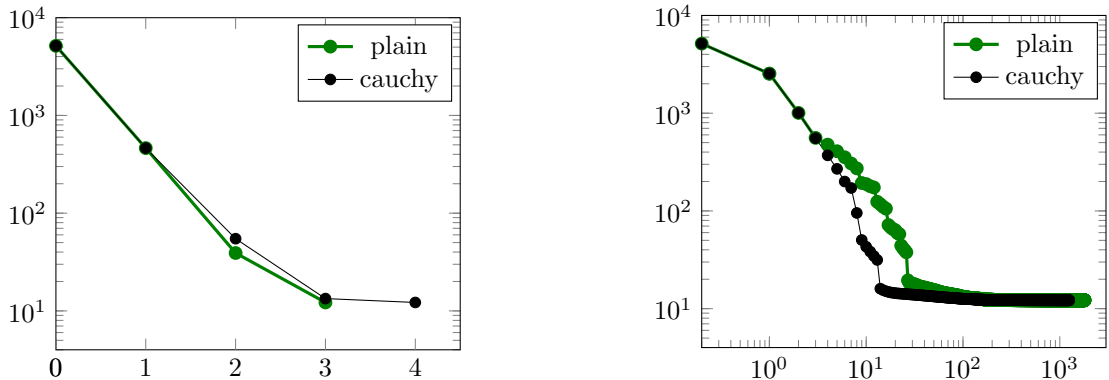

We follow the procedure for bound-constrained quadratic optimization 
that is described in Chapter 16.7 of~\cite{nocedal2006no}.
There, the piecewise path is defined by following the negative
gradient direction until a bound is reached in one of the coordinates.
Then, this coordinate is fixed, and the path continues by following
the negative gradient direction only in the other coordinates,
thereby resulting in a piecewise-linear path.
The Cauchy point is defined as the first local minimizer of the
quadratic model along this path.

We can define the search for a Cauchy point for problems of the
form~\eqref{E:mpcc} using a quadratic model
for $f$ along with some subtle changes
to the approach in~\cite{nocedal2006no}.
As in~\cite{nocedal2006no}, we use $-\nabla f(\xk)$ as the
search direction to compute the piecewise-linear path starting
from $\xk$.
For the entries of $x_0$, the procedure is the same as
in Chapter 16.7 of~\cite{nocedal2006no}.
For the other coordinates, we note that $x_{1i}$
and $x_{2i}$ cannot both become nonzero
but for
$i \in \{1,\ldots,n_1\}$, only one of the coordinates
$x_{1i}$ and $x_{2i}$ may be changed to remain inside
the feasible set, that is, to satisfy complementarity.
If the path reaches a kink, 
not only do we stop changing the coordinate with which we
arrived at the kink but we also pivot to the other coordinate
involved in the kink if the corresponding entry of the current search direction is positive. 

If a complementarity constraint is biactive at the start of this
piecewise-linear path, that is $\xk_{1i} = 0 = \xk_{2i}$ for some
$i \in \{1,\ldots,n_1\}$,
and if the search direction
is positive in both coordinates, we must
choose which coordinate is increased and which one is fixed to zero.
In this case, we make a greedy choice and
increase the one with
the larger entry in the search direction.
Ties are broken in lexicographical order.
The procedure is summarized in Algorithm~\ref{Cauchy}.
The symbol $\mathbf{1}_n$ in Algorithm \ref{Cauchy}
denotes the vector in $\R^n$ that is equal to $1$ in all components.

We note that the Cauchy point computation 
following the negative gradient direction vector
does not give us global convergence to B-stationary
points. The counterexample given in~\eqref{eq:ex1}
is also valid for the Cauchy point computation.
However, there are several possibilities to obtain
convergence by leveraging the LPCC$(\xk,\trn)$ step.
We first observe that by construction, the computed Cauchy point
is feasible and contained in an $\ell_\infty$-ball around $\xk$.

One option is to base the sufficient reduction condition on the
LPCC$(\xk,\trn)$ step.
We accept the Cauchy point, if its actual reduction is larger than the
actual reduction of the LPCC$(\xk,\trn)$ step (otherwise we use the
LPCC$(\xk,\trn)$ step). With this option, the Cauchy point
computation becomes a post-processing step and convergence
follows from the analysis of the LPCC steps.

An alternative approach that allows us to save evaluations of the 
objective function is to check whether the actual reduction from the 
Cauchy point satisfies the sufficient decrease condition with respect
to the linear predicted reduction from the LPCC$(\xk,\trn)$ step.
Only if this is not the case, do we need to check the sufficient decrease
condition as well.
This approach guarantees that the sufficient decrease condition
to obtain convergence of Algorithm~\ref{SLPCC}
to a local minimizer of~\eqref{E:mpcc}
will eventually be satisfied by virtue of the arguments in 
Section~\ref{S:proof}.
We include this variant of using the Cauchy point computation
in Algorithm~\ref{SLPCC} and Algorithm~\ref{Cauchy}.
The empirical results in Section~\ref{sec:synthetic_benchmark_results}
indicate that we can reduce the number of iterations
with the introduction of this step.

We note that there is at least one other approach
to define a piecewise-linear path along which a quadratic model, $q_k$ in Algorithm~\ref{Cauchy},
is minimized. One can also generate and record a pivoting sequence,
similar to what the solver PATH does to solve a set of linear
and complementarity conditions; see~\cite{cottle1967complementary,ferris1999interfaces}.
Having obtained this pivoting sequence, one can backtrack the path
and minimize a quadratic model on the different segments.

\begin{algorithm}[htb]
\caption{\texttt{FIND\_CAUCHY\_POINT}$(\xk,\dn,\trn)$}\label{Cauchy}
  \fontsize{8}{8}\selectfont
\begin{algorithmic}[1]
  \State Compute piecewise-linear path $s : [0,1] \to \R^n$
  from $\xk$ in direction $-\nabla f(\xk)$
  that satisfies the complementarity constraints
  with bounds given by $\ell_0$, $u_0$, $\xk - \mathbf{1}_n \trn$,
  and $\xk + \mathbf{1}_n \trn$.
  \State $t^* \gets $ first local minimizer of $q_k(t) = 0.5 s(t)^T \nabla^2 f(\xk) s(t) + \nabla f(\xk)^T s(t)$ for $t \in [0,1]$
  \State Evaluate $f(\xk + s(t^*))$ and compute $\rho^{k,l} \gets \frac{f(\xk) -f(\xk + s(t^*))}{- \nabla f(\xk)^T \dn}$
  \If{$\rho^{k,l} \ge \sigma$}
  \State {\bf return} $s(t^*)$
  \Else
  \State {\bf return} $\dn$
  \EndIf
\end{algorithmic}   
\end{algorithm}

The Cauchy step alone does not alleviate the slow convergence of the LPCC
algorithm, and, hence, we consider adding a second-order step next.

\subsection{Bound-Constrained Quadratic Programming  (BQP) Step}\label{sec:bqp_steps}

Here, we propose a second-order step, similar to sequential linear/quadratic programming
techniques for nonlinear programming (SLQP), see, e.g.,~\cite{FleSai:89,ChiFle:03,ByrdGoulNoceWalt04:mp,lenders2017pysleqp}.
Given an estimate of the active set produced by the LPCC step (or the Cauchy
point), we consider the predicted active sets
\[ {\cal A}_0(\xk+\dn), \; {\cal A}_1(\xk+\dn), \; {\cal A}_2(\xk+\dn), \; \text{and} \; {\cal D}(\xk+\dn) \]
as defined
in Section~\ref{sec:proof_prelims}. One important question is how to handle the
degenerate constraints ${\cal D}(\xk+\dn)$. We may choose any partition ${\cal
D}_1^k, {\cal D}_2^k \subset {\cal D}(\xk+\dn)$ to define the components
of the biactive set that are free variables for the BQP step.
In our implementation, we partition the
set greedily
with respect to the gradient; that is, we
partition ${\cal D}(\xk+\dn)$ into
\begin{align*}
 {\cal D}_1^k &= \displaystyle \left\{ i \in {\cal D}(\xk+\dn)\,|\, \nabla f(\xk+\dn)_{1,i} \ge \nabla f(\xk+\dn)_{2,i} \right\},\\
	{\cal D}_2^k &= {\cal D}(\xk+\dn)\setminus
	{\cal D}_1^k,
\end{align*}
which corresponds to fixing the component
with the larger gradient. Given these active sets,
we define the BQP as follows:
\begin{gather*}
\mbox{BQP}(x^k,\Delta) 
    \left\{
    \begin{aligned}
      \mini_d\  & q_k(d) \coloneqq \nabla f_k^T d + \frac{1}{2} d^T H_k d & \\
      \st\      & \ell_0 \le x^k_{0} + d_{0} \le u_{0}, & \\
      & x^k_{1,i} + d_{1,i} = 0 \text{ and } x^k_{2,i} + d_{2,i} \ge 0 & \forall i \in {\cal A}_1(\xk+\dn)\backslash {\cal D}_1^k, \\
      & x^k_{1,i} + d_{1,i} \ge 0 \text{ and } x^k_{2,i} + d_{2,i} = 0 & \forall i \in {\cal A}_2(\xk+\dn)\backslash {\cal D}_2^k, \\
& \|d\|_\infty \le \Delta,
    \end{aligned} \right. 
\end{gather*}
which is used to compute a second-order step, as described in Algorithm~\ref{SLPCC_BQP}.
Here $H_k$ is an approximation
of the Hessian $\nabla^2f(\xk)$.

\begin{algorithm}[htb]
\caption{\texttt{SOLVE\_BQP}$(\xk,\Delta^k_{QP},\rho^{k,l})$}\label{SLPCC_BQP}
  \fontsize{8}{8}\selectfont
\begin{algorithmic}[1]
  \algrenewcommand{\algorithmiccomment}[1]{\hfill \# \texttt{ #1}}
  \State Compute a BQP$(\xk, \Delta_{QP}^k)$ step $d^{k}_{QP}$
	   and ratio $\rho^k_{QP} = \frac{f(x^k) - f(x^k+ d^{k}_{QP})}{q(0) - q(d^{k}_{QP})}$
	   \State Update
	   \[ \Delta^{k+1}_{QP} = \left\{ \begin{aligned}
	   \min\{\overline{\Delta}^k,2\Delta^k_{QP}\} & \text{ if } \rho^k_{QP}  \ge 0.75,\\
	   \Delta^k_{QP} & \text{ if } 0.75 > \rho^k_{QP} \ge 0.25,\\
	   \frac{1}{4} \Delta^k_{QP} & \text{ otherwise}
	   \end{aligned}\right.\]
	   \State Accept step if $\rho_{QP}^k \geq \frac{\rho^{k,l}}{2}$,
	   update $x^{k+1} = x^k + d_{QP}^k$\label{ln:acceptance}
\end{algorithmic}   
\end{algorithm}

The identity
${\cal D}(\xk+\dn) = {\cal A}_1(\xk+\dn) \cap {\cal A}_2(\xk+\dn)$
implies that for all $i \in \{1,\ldots,n_1\}$, 
either $i \in {\cal A}_1(\xk+\dn)\setminus \revised{R1.2}{\cal D}_1^k$ or 
$i \in {\cal A}_2(\xk+\dn)\setminus \changed{\cal D}_2^k$.
Consequently $0 \le \xk_{1,i} + d_{1,i} \perp \xk_{2,i} + d_{2,i} \ge 0$
holds for all $d$ that are feasible for BQP$(\xk, \Delta_{QP}^k)$
and hence $\xk + d$ is feasible for~\eqref{E:mpcc}.

We note that the BQP step in Algorithm~\ref{SLPCC_BQP} differs from the SLQP approaches,
because we only fix the complementarity constraints, and solve a bound-constrained QP,
rather than an equality-constrained QP. Because our problem involves only bound constraints,
solving it is computationally not much harder than solving an equality-constrained QP.

\revised{R2.9}{We briefly comment on the convergence of SLPCC with BQP steps.}
The introduction of the additional BQP step does not change the outline
of the proof of Theorem~\ref{T:main}. The only small difference is that the
right-hand side of the second inequality
in~\eqref{eq:diverging_sum} now requires the factor $1/2$ in front
of the term $(k - k_0)\sigma \varepsilon \frac{\underline{\Delta}}{2^l}$.
This follows from the acceptance criterion of the step in
Algorithm~\ref{SLPCC_BQP}, line~\ref{ln:acceptance}.
Under such an acceptance criterion it does not matter whether we use
BQP steps or solve other subproblems
to accelerate the convergence of Algorithm~\ref{SLPCC}.

If the inner loop (that is, the solution of LPCC$(\xk,\trn)$) identifies
the optimal active set for all iterations $k \ge k_0$ for some
finite $k_0 \in \mathbb{N}$ and appropriate
Hessian approximations are used, then the BQP steps can be regarded 
as SQP steps on the reduced problem, which yield superlinear convergence.

\subsection{Illustrative Example}\label{sec:example_slow}

We now demonstrate the potential effect
of using BQP steps in Algorithm~\ref{SLPCC}.
We choose an augmented Lagrangian
subproblem derived from
\texttt{nash1a} in the library \texttt{MacMPEC}~\cite{leyffer2000macmpec}, 
with $n_0 = 4$ and $n_1 = 2$. We use the augmented Lagrangian parameters
$\rho = 2$, $\lambda_1 = 3.9375$, $\lambda_2 = -6.5$, $\lambda_3 = -0.25$,
and $\lambda_4 = 2.5$.
The resulting objective function becomes
\begin{gather}\label{eqn:figure_4_example}
\begin{aligned}
f(x) \coloneqq &\ \frac{1}{2}\left((x_{0,1} - x_{0,3})^2 + (x_{0,2} - x_{0,4})^2\right) \\
&\ + \lambda_1\left(-34 + 2 x_{0,3} + \frac{8}{3}x_{0,4} + x_{2,1}\right)\\
&\ - \lambda_2\left(-24.25 + 1.25x_{0,3} + 2 x_{0,4} + x_{2,2}\right) \\
&\ - \lambda_3\left(x_{1,1} + x_{0,2} + x_{0,3} - 15\right) \\
&\ + \lambda_4\left(x_{1,2} + x_{0,1} - x_{0,4} - 15\right) \\
&\ + .5\rho\Big( (-34 + 2x_{0,3} + \frac{8}{3}x_{0,4} + x_{2,1})^2
                 + (-24.25 + 1.25x_{0,3} +  2 x_{0,4} + x_{2,2})^2\\
&\quad + (x_{1,1} + x_{0,2} + x_{0,3} - 15)^2
       + (x_{1,2} + x_{0,1} - x_{0,4} - 15)^2\Big)
\end{aligned}
\end{gather} 
We run Algorithm~\ref{SLPCC} with Algorithm~\ref{SLPCC_BQP} using
\revised{R2.10}{$\underline{\Delta} = 2$}, $\overline{\Delta}^0 = 2$\changed{,}
and initial point $x^0 = 0$ until a first-order optimality of $10^{-7}$
is reached.

For this example, Algorithm~\ref{SLPCC} with Algorithm~\ref{SLPCC_BQP}
reaches a B-stationary point within a tolerance of $10^{-7}$
after $3$ iterations,
while Algorithm~\ref{SLPCC} reaches the tolerance after 1250 (with
Algorithm~\ref{Cauchy} in ln.\ \ref{ln:cauchy}, labeled \texttt{cauchy}) and 1798
(without Algorithm~\ref{Cauchy}, labeled \texttt{plain})
illustrating the advantage of BQP steps.
Figure~\ref{fig:slpecbqp_ft_slpec} shows the decrease in the objective value over the iterations
for both algorithms.
The slow convergence of Algorithm~\ref{SLPCC}
is due to the fact that, without using Line~\ref{ln:bqp}, 
the algorithm behaves like a
steepest descent method whose rate
of convergence is at best linear.
When using second-order information
at the expense of
an additional BQP solve, the algorithm converges quickly because
the active sets ${\cal A}_1(x^*) = \{0, 1\}$ and
${\cal A}_2(x^*) = \emptyset$ are correctly identified after the first
iteration and remain constant over the remaining iterations.

\section{Numerical Results for Synthetic Benchmark Problems}\label{sec:synthetic_benchmark_results}

To obtain quantitative results, we implemented
Algorithm~\ref{SLPCC} in Python and benchmarked
it on two classes of benchmark problems: quadratic problems and general 
nonlinear problems.

All instances were solved with two variants of Algorithm~\ref{SLPCC}. The
first variant includes taking BQP steps (labeled \texttt{plain}),
and the second variant includes both Cauchy steps as well as
BQP steps (labeled \texttt{cauchy}) as presented in Section~\ref{sec:bqp}.
The experiments were executed on a compute server
with four \textsc{Intel(R) Xeon(R) CPU E7-8890 v4} CPUs, clocked at 2.20\,GHz.
Because our problem instances are nonconvex due to the complementarity constraints,
the two versions of our algorithm, \texttt{plain} and \texttt{cauchy}, may return 
different local solutions. We used the open source library
ALGLIB\footnote{\url{https://www.alglib.net/}, Sergey Bochkanov}
to compute the BQP steps.

We also compare our implementations with four state-of-the-art NLP
solvers, namely filterSQP~\cite{FleLey:02}, IPOPT~\cite{Wachter2005}, MINOS~\cite{MurSaun:03,Murtagh1978}, and SNOPT~\cite{GilMurSau:05}.
NLP methods have been shown to currently be arguably the most efficient solvers
for MPECs; see, for example,~\cite{FletLeyf:04,FLRS:02,LeyLopNoc:06,RaghBieg:05}. NLP solvers
reformulate the complementarity constraint in~\eqref{E:mpcc} as a set of inequalities,
\[ x_1 \geq 0,\; x_2 \geq 0,\; x_1^T x_2 \leq 0, \]
where we do not need to enforce equality on the nonlinear constraint because the solvers
will maintain feasible iterates with respect to the simple bounds. (It has been shown 
that using $x_1^T x_2=0$ has worse theoretical properties and produces inferior
numerical results; see~\cite{FLRS:02}.)
We provide the numbers of outer iterations, running times, and
achieved objective values for our approach and the NLP solvers. The NLP solvers
generally have different per-iteration complexities than our approach, but
we include this information mainly to understand whether our approach solves the problems
within a reasonable number of outer iterations. Moreover, these quantities are
difficult to compare because the considered problems have nonconvex feasible
sets and different optimizers may converge to different stationary points.
We list run times as reported by the solvers
themselves.

\subsection{Quadratic Test Problems}
We consider four sets of quadratic test problems.
Each set consists of 10 instances of~\eqref{E:mpcc}
with a quadratic objective function $f$.
The instances in the problem sets are generated randomly and
differ in their size and their spectral properties of the
Hessian of $f$.
\begin{enumerate}
\item Set \texttt{20-ind}: $n_0 = n_1 = 20$,
      $\nabla^2 f(x)$ indefinite.
\item Set \texttt{20-psd}: $n_0 = n_1 = 20$,
      $\nabla^2 f(x)$ positive semidefinite.
\item Set \texttt{40-ind}: $n_0 = n_1 = 40$,
      $\nabla^2 f(x)$ indefinite.
\item Set \texttt{40-psd}: $n_0 = n_1 = 40$,
      $\nabla^2 f(x)$ positive semidefinite.
\end{enumerate}
Further details on the test problem instances are given in 
Appendix~\ref{sec:qpccs}.

The averaged relative difference between the locally
optimal objective values for \texttt{plain} and \texttt{cauchy}
was small for all quadratic problems considered:
($-1\%$ for \texttt{20-ind}, $0\%$ for \texttt{20-psd},
 $2\%$ for \texttt{40-ind}, and $0\%$ for \texttt{40-psd}).

\begin{figure}
  \centering
  \subfloat[][Number of iterations of \\ the outer loop.]
    {\includegraphics[trim={35 20 40 15}, clip, width=0.32\linewidth]{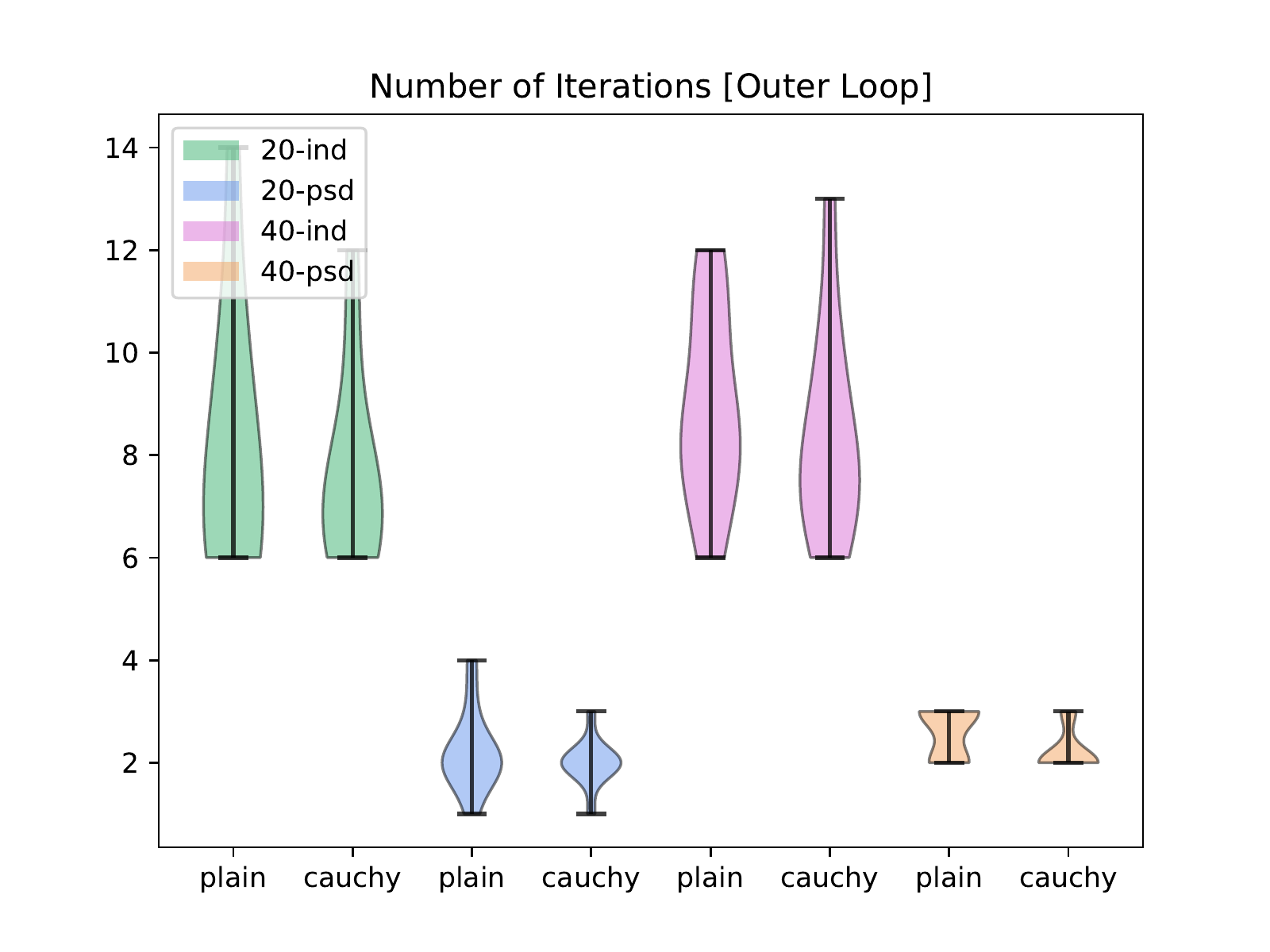}}
    \hfill
  \subfloat[][Average number of \\ iterations of the inner loop.]
    {\includegraphics[trim={35 20 40 15}, clip, width=0.32\linewidth]{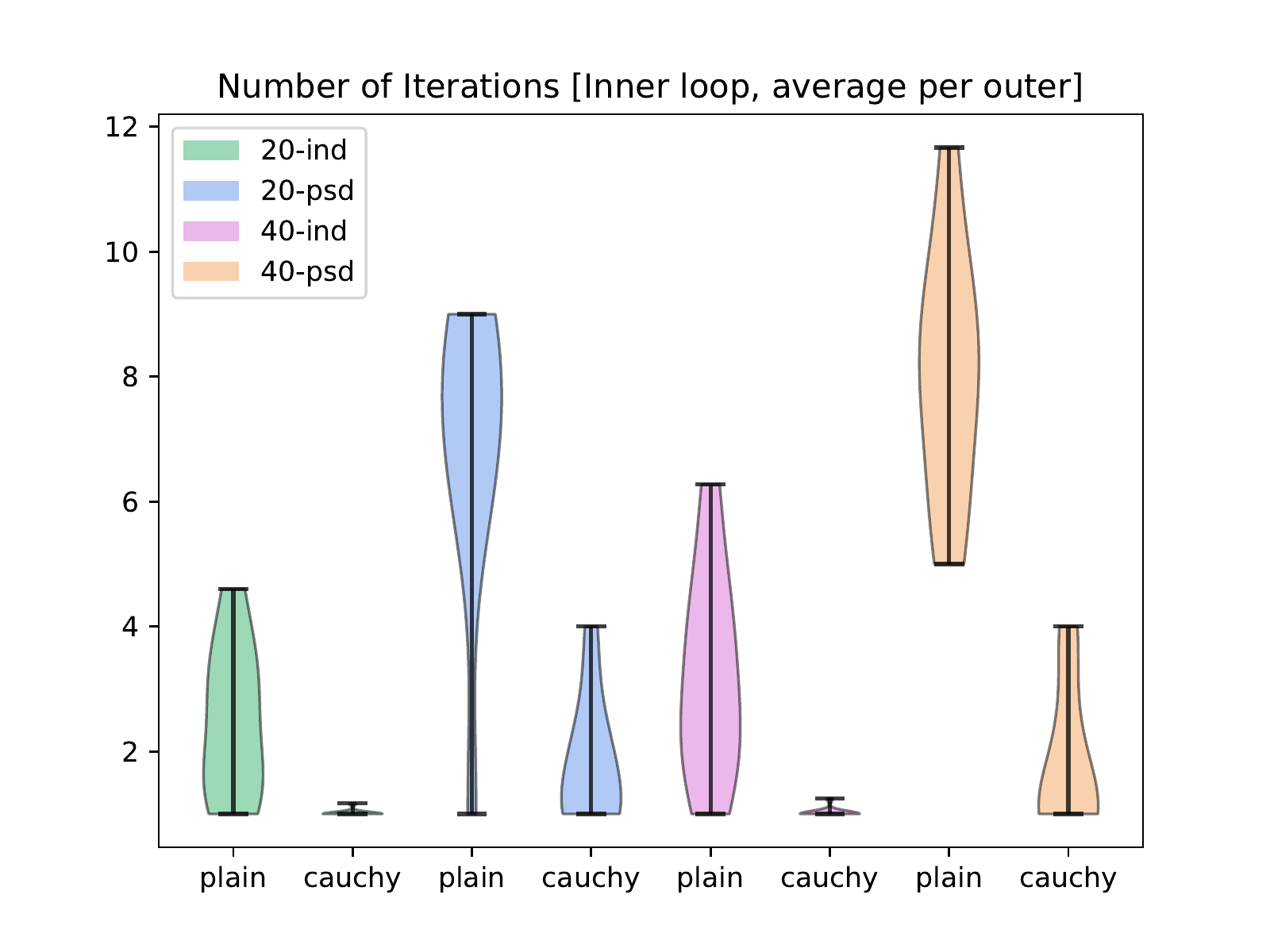}}
    \hfill
  \subfloat[][Average number of BQP \\ iterations.]
    {\includegraphics[trim={25 20 40 15}, clip, width=0.32\linewidth]{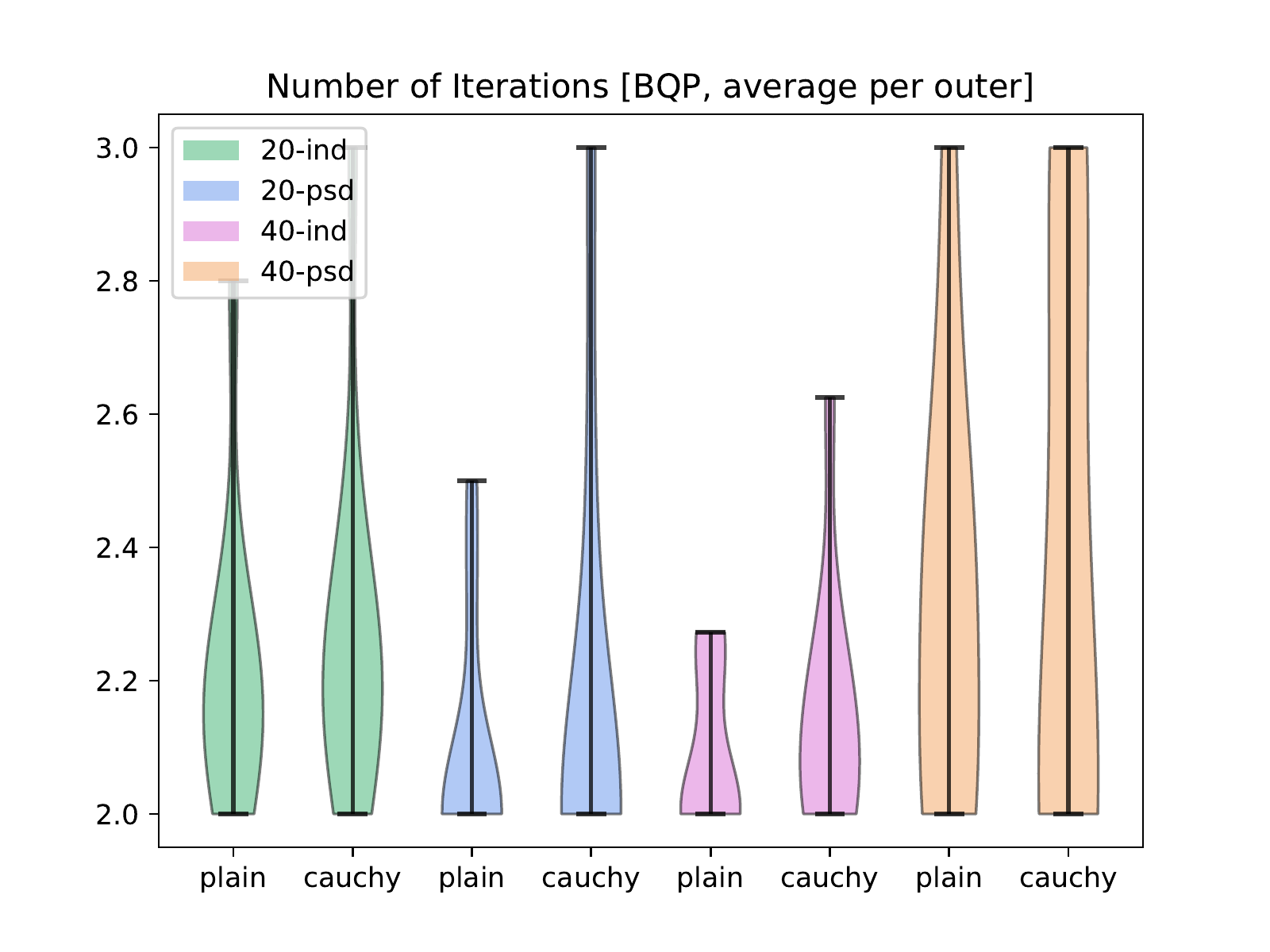}}  
  \caption{Computational results obtained with two variants of
  Algorithm~\ref{SLPCC} on test problem sets
  \texttt{20-ind}, \texttt{20-psd}, \texttt{40-ind}, \texttt{40-psd}.
  Both variants include BQP steps in
  ln.\ \ref{ln:bqp},
  the variant \texttt{cauchy}
  does and the variant \texttt{plain} does not
  include the \texttt{SEARCH\_CAUCHY\_POINT} routine after the LPCC step
  in ln.\ \ref{ln:cauchy}.}
  \label{fig:slpecbqp_1opt}
\end{figure}

Both variants of Algorithm~\ref{SLPCC} converge in a modest
number of outer  iterations (less than 10). Similarly, the average number of
inner iterations per outer iteration is small, which indicates that our
trust region update strategy is efficient. In comparing the two variants,
we note that the addition of the Cauchy step reduces the average number of 
inner iterations by a factor of 2--3 and slightly improves the number of
outer iterations.
Figure~\ref{fig:slpecbqp_1opt} shows violin plots comparing the performance of
the two variants. The plots represent the distribution of the respective
results on each of the four sets of problems. One can see that {\tt cauchy}
is slightly more efficient in terms of iteration numbers
than {\tt plain} and that both variants require a similar number of BQP iterations.

The number of major iterations of our approach
is similar to or slightly less than the best NLP solver.
Figure~\ref{fig:slpec_vs_others} shows violin plots for the
largest problem instances (the results for the smaller instances are similar).
We note that one run of SNOPT reached the maximum iteration limit of 200.
Moreover, we have excluded four runs of MINOS of the set 40-psd,
which stopped at infeasible points. We note that several runs of MINOS
reported convergence to optimality but stopped with a first-order optimality
tolerance higher than $10^{-5}$. To provide a realistic impression of the
iteration counts, we have decided to include these runs in the plots.

\begin{figure}
  \centering
  \subfloat[][Case \texttt{40-ind}]
    {\includegraphics[scale=.4]{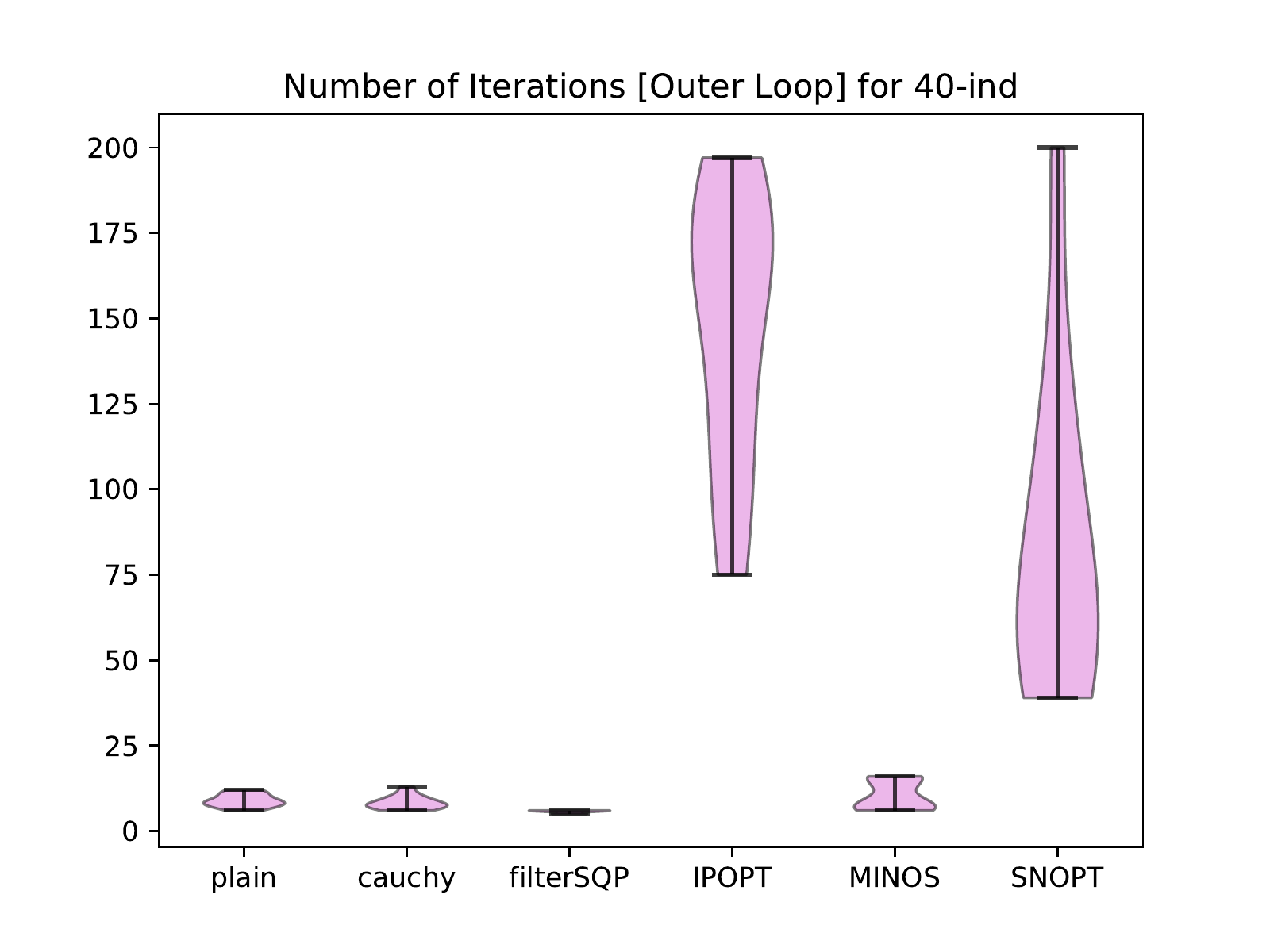}}
  \subfloat[][Case \texttt{40-psd}]
    {\includegraphics[scale=.4]{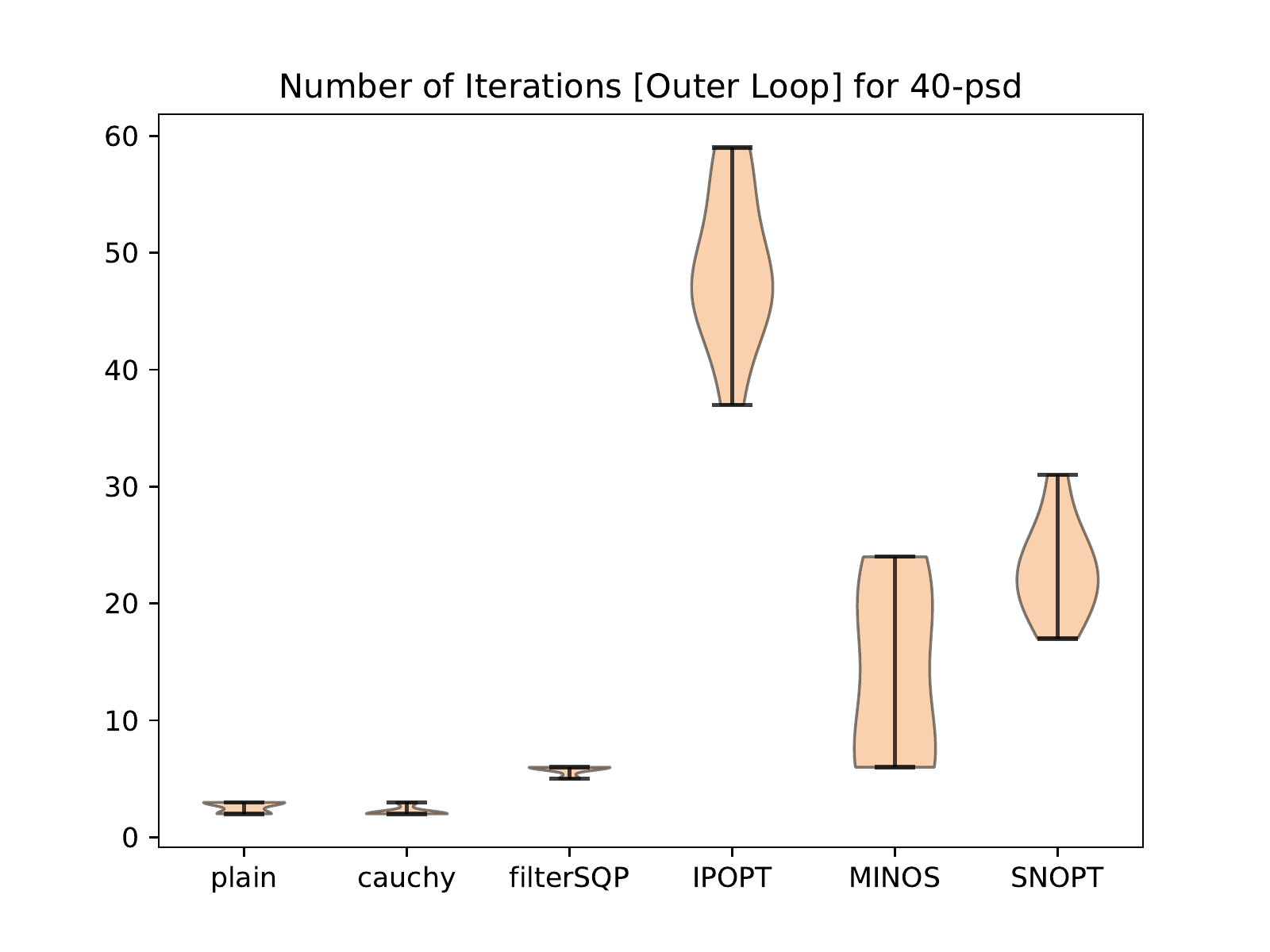}}   
  
  \caption{Number of outer iterations obtained with two variants of
  Algorithm~\ref{SLPCC} on test problem sets
  \texttt{20-ind}, \texttt{20-psd}, \texttt{40-ind}, \texttt{40-psd}
  compared with iterations of filterSQP, IPOPT, MINOS, and SNOPT.
  Both variants include BQP steps, the variant the variant \texttt{cauchy}
  does and the variant \texttt{plain} does not
  include the \texttt{SEARCH\_CAUCHY\_POINT} routine after the LPCC
  step.}\label{fig:slpec_vs_others}
\end{figure}

The objective values and iteration numbers achieved with our 
implementation are comparable to those of the NLP solvers while the running
times of our implementations are comparable to IPOPT but slower than
the running times of filterSQP, MINOS and SNOPT. Although not always
the case, the variant \texttt{cauchy} is often 
the slowest solver, but this may be attributed to the use of Python.

We provide detailed results in Appendix~\ref{sec:detailed_computational_results}. Specifically,
Table~\ref{tab:complete_results} provides results of our two
implementations on the quadratic problems. The rows of the table are
the test problem instances with the names introduced above. For each test
problem instance, the objective values for \texttt{plain} and
\texttt{cauchy} are given as well as the number of
outer, inner, and BQP iterations.
Table~\ref{tab:bqp_all_iterations} provides the major/outer
iteration counts of the NLP solvers filterSQP, IPOPT, MINOS, and SNOPT
and our implementations. Table~\ref{tab:bqp_all_running} provides the
running times of the NLP solvers and our implementations and
Table~\ref{tab:bqp_all_objective} provides the achieved objective values
for all solvers.

\subsection{General Nonlinear Test Problems}

We have also run our implementations \texttt{cauchy} and \texttt{plain} 
described above on twenty nonlinear test problems that are detailed in
Appendix~\ref{sec:nlpccs}. For the instances that are called
\texttt{20-fletcher0}, \texttt{20-fletcher1}, \texttt{40-fletcher0},
and \texttt{40-fletcher1} the reduced Hessian in the BQP subproblem is
nearly singular and has a condition number larger than $10^8$ at the
final iterate.

With the exception of the aforementioned degenerate instances,
our implementation of
Algorithm~\ref{SLPCC} always terminates with
an iterate that satisfies a first-order optimality tolerance 
of $10^{-6}$ or less using relatively few iterations,
approximately comparable to the test instances with 
quadratic objectives.
Note that a first-order tolerance of $10^{-6}$ is reached by
\texttt{plain} for \texttt{20-fletcher0}, and \texttt{20-fletcher1}
and by \texttt{cauchy} for
\texttt{20-fletcher1}, \texttt{40-fletcher0}, and \texttt{40-fletcher1}. 
The implementation terminates for the remaining instances because the
trust region contracts to zero at the final iterate
(our implementation stops after halving trust region radius 50 times),
with a first-order error of around $10^{-5}$, which we count as a failure
of our algorithm.

\begin{figure}
  \centering
  \includegraphics[scale=.7]{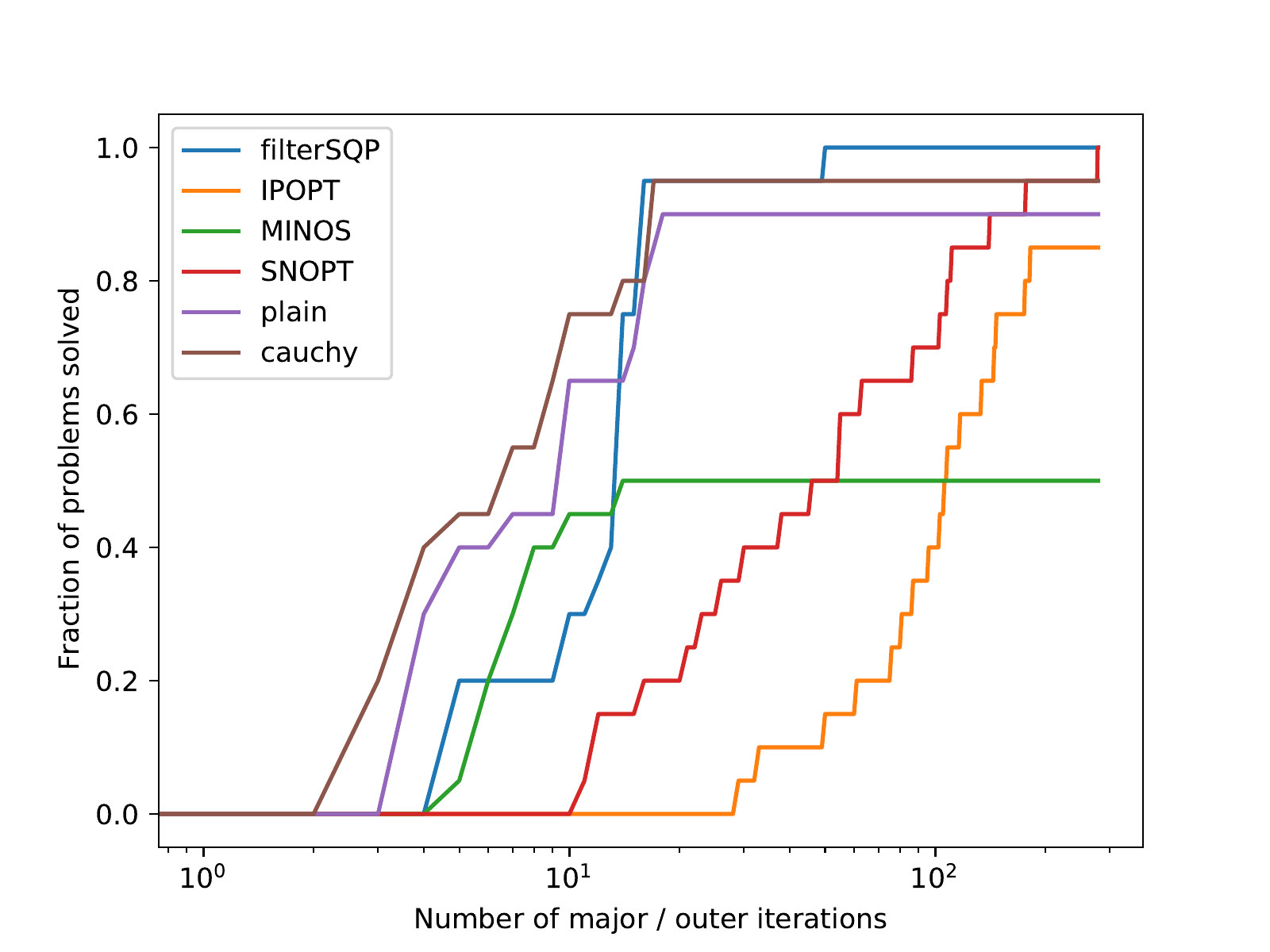}
  \caption{Number of major/outer iterations vs.~fraction
  of problems solved for two variants of Algorithm~\ref{SLPCC}
  on the nonlinear test problem set
  compared with filterSQP, IPOPT, MINOS, and SNOPT.
  Both variants include BQP steps, the variant the variant \texttt{cauchy}
  does and the variant \texttt{plain} does not
  include the \texttt{SEARCH\_CAUCHY\_POINT} routine after the LPCC step.}
  \label{fig:nlpcc_slpec_vs_others}
\end{figure}

We have run the same four NLP solvers on the general nonlinear
test problems. The NLP solvers filterSQP and MINOS require a similar amount of
outer iterations. The solvers SNOPT and IPOPT require significantly more 
iterations.
IPOPT does not find a solution of sufficient first-order optimality for the
degenerate instances within 3000 iterations.
The solver MINOS terminates at infeasible points in three runs. In seven 
further runs, it shows a first-order error of $10^{-5}$ or higher on 
termination. We give a performance profile for the nonlinear test cases
in Figure~\ref{fig:nlpcc_slpec_vs_others}, where we count the aforementioned
runs of IPOPT, MINOS, and our implementation as failures.
We observe that our implementations are competitive with the best NLP solvers
on the set of general nonlinear benchmark problems.
As for the quadratic test problems, the objective values
achieved with our implementation are comparable to those of the NLP solvers
while the running time of our implementation \texttt{plain}
is comparable to IPOPT but slower than
the running times by filterSQP, MINOS and SNOPT. Again, the variant \texttt{cauchy}
is often significantly slower than all other solvers.

We provide detailed results in Appendix~\ref{sec:detailed_computational_results}.
Specifically,
Table~\ref{tab:nlpcc_complete_results} provides detailed results of our two
implementations on the quadratic problems. The rows of the table are
the test problem instances with the names introduced above. For each test
problem instance, the objective values for \texttt{plain} and
\texttt{cauchy} computation are given as well as the number of
outer, inner, and BQP iterations.
Table~\ref{tab:nlpcc_all_iterations} provides the major/outer
iteration counts of the NLP solvers filterSQP, IPOPT, MINOS, and SNOPT
and our implementations. 
Table~\ref{tab:nlpcc_all_running} provides the
running times of the NLP solvers and our implementations and
Table~\ref{tab:nlpcc_all_obj} provides the achieved objective values
of the NLP solvers and our implementations.

\section{Conclusion and Extension}\label{S:generalization}
\setcounter{equation}{0}
We have introduced a new sequential LPCC algorithm for
bound-constrained MPCCs and shown that it  converges to a B-stationary point.
Such an approach can be used as a (nonsmooth)
subproblem solver for general MPCCs. Our approach is shown to be competitive
with state-of-the-art approaches for solving a collection of synthetic
benchmark problems. The outer and inner loops of the
benchmarked variants \texttt{plain} and \texttt{cauchy} of Algorithm~\ref{SLPCC}
have been implemented in Python; switching to a different language may result in some performance
improvements. We also believe that a more efficient implementation of the 
\texttt{SEARCH\_CAUCHY\_POINT} procedure would reduce the current gap in running times
between the \texttt{plain} and \texttt{cauchy} variants.

To test the algorithmic developments in the
context of our motivation, we have implemented
an augmented Lagrangian method based on~\cite[Section\,17.4]{nocedal2006no}
that solves subproblems of the
form~\eqref{eq:alsubproblem} using Algorithm~\ref{SLPCC}.
We test this approach on the \texttt{nash1} problem from \texttt{MacMPEC}.
Compared to an augmented Lagrangian method that treats the complementarities
as general nonlinear constraints, we observe a different qualitative behavior
on this test instance. In particular, the penalty parameter exhibits
a slower growth over the iterations
and a different sequence
of iterates is taken. Both methods converge to the same
strongly stationary point. In particular,
our approach reaches minimal values
for the constraint violation and stationarity
measure after $4$ iterations at a penalty
parameter value of $10^3$; 
the method that handles the complementarity constraint as a general
nonlinear constraint
reaches minimal values for
constraint violation and stationarity measure
after $8$ iterations at a penalty parameter value of $10^7$. See 
Appendix~\ref{sec:alintegration} for more detailed results and plots.

We note that it is straightforward to extend the developments of the preceding
sections to 
formulations of complementarity-constrained problems of the form
\begin{gather}\label{eq:generalized_cc}
\begin{aligned}
	\min_x \ & f(x) \\
	\text{s.t.\ } & \ell_0 \le x_0 \le u_0, \\
	& \ell_{1} \le x_1 \le u_1 \; \perp \; \ell_2 \le x_2 \le u_2,
\end{aligned}
\end{gather}
where for all $i \in \{1,\ldots,n_1\}$ exactly two of $\{\ell_{1,i}, u_{1,i}, \ell_{2,i}, u_{2,i} \}$ are finite.
This format allows more general mixed-complementarity expressions and mimics the definition of
complementarity constraints in AMPL~\cite{FerrFourGay:99}.
We tabulate two such types of
complementarity formulations and sketch their active sets
in Table~\ref{T:cc_types}.
Other general forms are easily derived by swapping components between $x_1$
and $x_2$, shifting bounds, or negating variables.
We note that we do not reformulate these complementarity constraints using slack variables, because 
such a reformulation would introduce additional linear constraints, making it harder
to apply our trust region algorithm.

\newcommand{\centered}[1]{\begin{tabular}{l} #1 \end{tabular}}
{
\begin{table}
\caption{Different types of complementarity constraints per
coordinate pair $(x_{1,i}, x_{2,i})$:
short notation, description of feasible set, 
and sketch of the feasible set. The coordinate index is omitted.}
\label{T:cc_types}
\begin{tabular}{c@{\hspace{2pt}}c@{\hspace{2pt}}c@{\hspace{2pt}}c}
\hline
& \revised{R2.12}{Short notation} & \changed{Feasible set} & \changed{Sketch}
\\ \hline\\
(a) & \centered{$\ell_1 \le x_1 \le u_1 \perp x_2$}
& \centered{$\ell_1 \le x_1 \le u_1$,\\ $x_2 \in \left\{
\begin{aligned}
[0,\infty) & : & x_1 = \ell_1 \\
\{0\} & : & x_1 \in (\ell_1,u_1) \\
(-\infty, 0] & : & x_1 = u_1
\end{aligned}
\right. $}
& \centered{\begin{tikzpicture}
\draw[dashed,gray] (0,1.5) -- (0,-1.5);
\draw[dashed,gray] (-0.5,0) -- (1.5,0);
\draw[dashed,gray] (1,1.5) -- (1,-1.5);
\draw (0,1) -- (0,0) -- (1,0) -- (1,-1);
\node[anchor=north] at (0,-1.5) {$\ell_1$};
\node[anchor=north] at (1,-1.5) {$u_1$};
\node[anchor=east] at (-0.5,0) {$0$};
\end{tikzpicture}}\\
(b) & \centered{$\ell_1 \le x_1 \perp x_2 \le u_2$}
& \centered{$\ell_1 \le x_1$, $x_2 \le u_2$,\\
$(x_1 - \ell_1)(u_2 - x_2) \le 0$}
& \centered{\begin{tikzpicture}
\draw[dashed,gray] (-0.5,0) -- (1.5,0);
\draw[dashed,gray] (0,.5) -- (0,-1.5);
\draw (0,-1) -- (0,0) -- (1,0);
\node[anchor=north] at (0,-1.5) {$\ell_1$};
\node[anchor=east] at (-0.5,0) {$u_2$};
\end{tikzpicture}}\\ \hline
\end{tabular}
\end{table}
}

The trust region subproblem corresponding to general LPCCs is given by
\[ \mbox{G-LPCC}(x,\Delta)
   \left\{ \begin{array}{ll} \dps
       \mini_d  & \nabla f(x)^T d \\
       \st      & l_0 \leq x_0 + d_0 \le u_0 \\
                & \ell_1 \le x_1 + d_1 \le u_1
       \; \perp \;
       \ell_2 \le x_2 + d_2 \le u_2, \\
                & \| d \|_{\infty} \leq \Delta ,
   \end{array} \right. 
\]
where $\ell_0$, $\ell_1$, $\ell_2$, $u_0$, $u_1$, and $u_2$ satisfy the conditions in
\eqref{eq:generalized_cc}.

It is a straightforward exercise to show that the trust region subproblem G-LPCC$(x,\Delta)$
can be solved as efficiently as~\eqref{eq:LPCC}, again by considering all possible
solutions for each index $i$ independently. The convergence results then follow
from the sufficient reduction condition, which is unaffected by the form of the
complementarity constraints.

\section*{Acknowledgments}
This work was supported by the U.S.~Department of Energy, Office of Science, Office of Advanced Scientific Computing Research, Scientific Discovery through
Advanced Computing (SciDAC) Program through the FASTMath Institute under Contract No. DE-AC02-06CH11357
C.~Kirches and P.~Manns acknowledge funding by Deutsche Forschungsgemeinschaft through Priority Programme 1962, grant Ki1839/1-2.
C.~Kirches has received funding from the German Federal Ministry of Education
and Research through grants 05M17MBA-MoPhaPro, 05M18MBA-MOReNet, 05M20MBA-LEO\-PLAN
and 01/S17089C-ODINE.
The authors thank two anonymous referees for their helpful suggestions and comments.

\FloatBarrier
\bibliographystyle{siamplain}
\bibliography{MPEC,NLP}

\appendix
\section{Description of Test Problems}
Here, we briefly describe the two classes of test problems that we used in our computational experiments.

The problems were generated in Matlab
and written out as AMPL model and data files.
All problem instances, as well as the Matlab routine used to generate them are available at
\url{https://wiki.mcs.anl.gov/leyffer/index.php/BndMPCC}.

\subsection{Quadratic MPCCs}\label{sec:qpccs}
The quadratic test problems are of the form
\begin{gather}\label{E:qpcc} 
   \begin{aligned}
     \mini_{x}\ 	& \frac{1}{2} x^T H x + g^T x		\\
     \st\ & \ell_0 \leq x_0 \leq u_0, \\
		& 0 \leq x_1 \; \perp \; x_2 \geq 0,
   \end{aligned}
\end{gather}
where $x_0 \in \R^{n_0}$, $x_1, x_2 \in \R^{n_1}$, $H$ is a symmetric sparse matrix with density $(n_0+2n_1)^2/4$
whose entries are normally distributed, and $g$ is a vector in $\R^{n_0+2n_1}$ whose components are
uniform random numbers in the range $[-10,10]$. The bounds $l_0, u_0$ are 
uniform random numbers in the range $[-10,10]$ and $[0,20]$, and we ensure that $l_{0,i} < u_{0,i}$.
We round all data to four digits, because we have observed that this makes the problems harder
to solve. In addition, we believe that real-life problems are not typically described in terms of
double precision data.

\subsection{General Nonlinear MPCCs}\label{sec:nlpccs}

We have also curated a set of nonquadratic test problems of the form~\eqref{E:mpcc} by adding bounds
and complementarity constraints to some well-known nonlinear test problems. For each nonlinear function,
we created two sets of instances by varying the indices in the complementarity constraints. In all
cases, $n=n_0+2n_1$, with $n_0=n_1=20$ or $n_0=n_1=40$, is used in our experiments.
All functions are taken from~\cite{andrei2008unconstrained}.

\begin{description}
\item[fletcher] 
  $\dps f(x) = \sum_{i=1}^{n-1} 100 \big(x_{i+1} - x_{i} + 1 - x_{i}^2\big)^2$
\item[himmelblau] 
  $\dps f(x) = \sum_{i=1}^{n/2} \big( (x_{2 i-1} + x_{2 i} - 11)^2
                               + (x_{2 i-1} + x_{2 i}^2 - 7)^2 \big) $
\item[mccormick] 
  $\dps f(x) = \sum_{i =1}^{n-1} \big(-1.5 x_{i} + 2.5 x_{i+1} + 1 + (x_{i} - x_{i+1})^2 + \sin(x_{i}+x_{i+1})\big) $
\item[powell] \phantom{+} 
~

  $\dps \hspace*{-1cm}
  f(x) = \sum_{i =1}^{n/4} \big( (x_{4 i-3} + 10 x_{4 i-2})^2
                               + 5 (x_{4 i-1} - x_{4 i})^2
                               + (x_{4 i-2} - 2 x_{4 i-1})^4
                               + 10 (x_{4 i-3} - x_{4 i})^4\big)$
\item[rosenbrock] 
  $\dps f(x) = \sum_{i =1}^{n-1} \big( 100 (x_{i+1}-x_{i}^2)^2 + (1-x_{i})^2  \big)$
\end{description}
For each function, we generated two instance classes with different complementarity constraints:
\begin{eqnarray*}
  \text{Class 0:} \quad 0 \leq x_{i} \; & \perp & \; x_{n_1+i} \geq 0, \quad \forall i \in 1,\ldots,n_1 \\
  \text{Class 1:} \quad 0 \leq x_{2i-1} \; & \perp & \; x_{2i} \geq 0, \quad \forall i \in 1,\ldots,n_1.
\end{eqnarray*}
For all nonlinear test problem instances\changed{,} the lower bound \revised{R2.13}{$l_0$} on $x_0$ was set to zero in all coordinates. The upper bound 
\revised{R2.13}{$u_0$} on $x_0$,
was set to $10^8$ in all coordinates.
\clearpage

\section{Detailed Computational Results}\label{sec:detailed_computational_results}

\begin{table}[H]
  \centering
\caption{Performance for Algorithm~
\ref{SLPCC} with Algorithm~\ref{SLPCC_BQP},
and with (\texttt{cauchy})
and without (\text{plain})
Algorithm~\ref{Cauchy}
on 40 problem instances. (All first-order optimality measures are less than $10^{-13}$.) \changed{For each problem instance, boldface numbers indicate the algorithm that delivered the best (lowest) objective values and (total) iteration counts.}}\label{tab:complete_results}
  \scriptsize
\begin{tabular}{lrrrrrrrr}
\hline
\multicolumn{1}{|l|}{\multirow{2}{*}{Prob-Inst.}}
& \multicolumn{2}{c|}{Obj.\ val.}
& \multicolumn{2}{c|}{outer iters}
& \multicolumn{2}{c|}{\revised{R.14}{total} inner iters}
& \multicolumn{2}{c|}{BQP iters}\\
\cline{2-9} 
\multicolumn{1}{|l|}{} & 
\multicolumn{1}{c|}{plain} & \multicolumn{1}{c|}{cauchy} &
\multicolumn{1}{c|}{plain} & \multicolumn{1}{c|}{cauchy} &
\multicolumn{1}{c|}{plain} & \multicolumn{1}{c|}{cauchy} &
\multicolumn{1}{c|}{plain} & \multicolumn{1}{c|}{cauchy}\\
  \hline
20-ind-0 & -6459.47 & -6459.47 & 10 & \textbf{8} & 46 & \textbf{8} & 28 & \textbf{24} \\
20-ind-1 & \textbf{-7373.48} & -7135.19 & 14 & \textbf{12} & 28 & \textbf{14} & 30 & \textbf{26} \\
20-ind-2 & \textbf{-2618.47 }& -2538.46 & 6 & 6 & 8 & \textbf{6} & 13 & 13 \\
20-ind-3 &\textbf{ -3115.40} & -3115.40 & 8 & \textbf{7} & 24 & \textbf{7} & 18 & \textbf{17} \\
20-ind-4 & \textbf{-6272.72 }& -6219.23 & 8 & 8 & 12 & \textbf{8} & \textbf{16} & 18 \\
20-ind-5 & \textbf{-2829.78 }& -2748.66 & 6 & \textbf{7} & 8 & \textbf{7} & \textbf{13} & 16 \\
20-ind-6 & -8374.58 & -8374.58 & 11 & \textbf{10 }& 31 & \textbf{10} & 22 & \textbf{20} \\
20-ind-7 & -1588.59 & -1588.59 & 6 & 6 & 20 & \textbf{6} & 14 & \textbf{13} \\
20-ind-8 & -2045.32 & -2045.32 & 8 & \textbf{7} & 30 & \textbf{7} & 17 & \textbf{15} \\
20-ind-9 & -5622.22 & -5622.22 & 6 & 6 & 6 & 6 & 13 & 13 \\
\hline
20-psd-0 & 1147.55 & 1147.55 & 2 & 2 & 12 & \textbf{4} & 4 & 4 \\
20-psd-1 & 1043.67 & 1043.67 & 2 & 2 & 16 & \textbf{2} & 4 & 4 \\
20-psd-2 & 1772.49 & 1772.49 & 2 & 2 & 14 & \textbf{4} & 4 & 4 \\
20-psd-3 & 566.63 & 566.63 & 1 & 1 & 1 & 1 & 2 & 2 \\
20-psd-4 & \textbf{896.98  }&898.00 & 2 & 2 & 18 & \textbf{6} & 4 & 4 \\
20-psd-5 & 1507.17 & 1507.17 & 2 & 2 & 12 & \textbf{2} & 4 & 4 \\
20-psd-6 & 751.21 & 751.21 & 4 & \textbf{3} & 36 & \textbf{3} & 8 & \textbf{6} \\
20-psd-7 & 2050.00 & 2050.00 & 2 & 2 & 16 & \textbf{8} & 4 & 4 \\
20-psd-8 & 1109.89 & 1109.89 & 3 & \textbf{2} & 25 & \textbf{2} & 7 & \textbf{6} \\
20-psd-9 & 1090.73 & 1090.73 & 2 & 2 & 14 & \textbf{4} & 5 & 5 \\
\hline
40-ind-0 & \textbf{-7999.59 }& -7997.27 & \textbf{6} & 7 & \textbf{6} & 7 & \textbf{12} & 15 \\
40-ind-1 & -17702.74 & -17702.74 & 11 & \textbf{10} & 45 & \textbf{10} & 25 & \textbf{22} \\
40-ind-2 & \textbf{-23716.62} & -23462.24 & 8 & \textbf{7} & 16 & \textbf{7} & 16 & 16 \\
40-ind-3 & \textbf{-7710.66 }& -7589.43 & 7 & \textbf{6} & 17 & \textbf{6} & 14 & \textbf{12} \\
40-ind-4 & -13571.15 & \textbf{-17301.43} & \textbf{12} & 13 & 46 & \textbf{13} & \textbf{24} & 26 \\
40-ind-5 & -10395.51 & -10395.51 & 9 & \textbf{8} & 31 & \textbf{8} & 19 & \textbf{16} \\
40-ind-6 & -4889.03 & \textbf{-4890.66 }& 8 & \textbf{7} & 40 & \textbf{7} & 18 & \textbf{15} \\
40-ind-7 & \textbf{-17301.07} & -17147.55 & \textbf{9} & 10 & 19 & \textbf{10} & \textbf{18} & 21 \\
40-ind-8 & -14414.19 & \textbf{-14198.29} & 8 & 8 & 14 & \textbf{10} & \textbf{18} & 21 \\
40-ind-9 & -10073.14 & -10073.14 & 11 & \textbf{8} & 69 & \textbf{8} & 22 & \textbf{16} \\
\hline
40-psd-0 & 4009.29 & 4009.29 & 3 & 3 & 23 & \textbf{7} & 6 & 6 \\
40-psd-1 & 4147.27 & 4147.27 & 3 & \textbf{2} & 29 & \textbf{2} & 7 & \textbf{4} \\
40-psd-2 & \textbf{3105.85} & 3116.39 & 2 & 2 & 10 & \textbf{2} & \textbf{5} & 6 \\
40-psd-3 & 4944.42 & 4944.42 & 3 & \textbf{2} & 35 & \textbf{2} & 8 & \textbf{5} \\
40-psd-4 & 2452.84 & 2452.84 & 3 & \textbf{2} & 25 & \textbf{2} & 6 & 6 \\
40-psd-5 & 2365.25 & 2365.25 & 3 & \textbf{2} & 29 & \textbf{2} & 7 & \textbf{6} \\
40-psd-6 & 4035.26 & 4035.26 & 2 & 2 & 12 & \textbf{4} & \textbf{4} & 5 \\
40-psd-7 & 3154.05 & 3154.05 & \textbf{2} & 3 & 12 & \textbf{11} & 6 & 6 \\
40-psd-8 & 2220.87 & 2220.87 & 2 & 2 & 16 & \textbf{8} & 4 & 4 \\
40-psd-9 & 3657.14 & 3657.14 & 3 & \textbf{2} & 25 & \textbf{2} & 7 & \textbf{4} \\
\hline
\end{tabular}
\end{table}

\clearpage

\begin{table}[htb]
  \centering
  \caption{Number of major/outer iterations for solving random instances of bound-constrained MPCCs
(INF = Termination at infeasible point, F = Termination with violated
  optimality tolerance).}
  \label{tab:bqp_all_iterations}
  \scriptsize  
\begin{tabular}{lrrrrrr}  
\hline
\multicolumn{1}{|l|}{\multirow{2}{*}{Prob-Inst.}}
& \multicolumn{6}{c|}{Number of major/outer iterations}
\\
\cline{2-7} 
\multicolumn{1}{|l|}{} 
& \multicolumn{1}{c|}{plain}
& \multicolumn{1}{c|}{cauchy}
& \multicolumn{1}{c|}{filterSQP}
& \multicolumn{1}{c|}{IPOPT}
& \multicolumn{1}{c|}{MINOS}
& \multicolumn{1}{c|}{SNOPT}\\ \hline
    20-ind-0 & 10 & 8 & 6 & 108 & F\;\phantom{0}7 & 41\\
    20-ind-1 & 14 & 12  & 4 & 113 & 15 & 31\\
    20-ind-2 & 6 & 6 & 5 & 79 & 7 & 25\\
    20-ind-3 & 8 & 7 & 5 & 99	& 5 & 76\\
    20-ind-4 & 8 & 8 & 6 & 108 & F\;20 & 68\\
    20-ind-5 & 6 & 7 & 6 & 84 & 5 & 30\\
    20-ind-6 & 11 & 10 & 6 & 91 & 6 & 36\\
    20-ind-7 & 6 & 6 & 5 & 51 & 6 & 34\\
    20-ind-8 & 8 & 7 & 6 & 92	& 7 & 33\\
    20-ind-9 & 6 & 6 & 5 & 91 & 7 & 41\\ \hline
    20-psd-0 & 2 & 2 & 6 & 43 & F\;18 & 15\\
    20-psd-1 & 2 & 2 & 5 & 50 & 6 & 18\\
    20-psd-2 & 2 & 2 & 5 & 39 & 6 & 15\\
	20-psd-3 & 1 & 1 & 5 & 40 & F\;19 & 18\\
	20-psd-4 & 2 & 2 & 5 & 43 & F\;18 & 16\\
	20-psd-5 & 2 & 2 & 6 & 38 & F\;18 & 17\\
	20-psd-6 & 4 & 3 & 5 & 42	& 7 & 19\\
	20-psd-7 & 2 & 2 & 5 & 42	& 16 & 15\\
	20-psd-8 & 3 & 2 & 5 & 42 & 14 & 19\\
	20-psd-9 & 2 & 2 & 6 & 48	& 7 & 30 \\ \hline
    40-ind-0 & 6 & 7 & 6 & 132 & 7 & F\;200 \\
    40-ind-1 & 11 & 10 & 5 & 163 & 6 & 64 \\
    40-ind-2 & 8 & 7 & 6 & 96 & F\;15 & 47 \\
    40-ind-3 & 7 & 6 & 6 & 175 & 8 & 54 \\
    40-ind-4 & 12 & 13 & 6 & 178 & 7 & 118 \\
    40-ind-5 & 9 & 8 & 5 & 197 & F\;16 & 47 \\
    40-ind-6 & 8 & 7 & 6 & 75 & F\;\phantom{0}7 & 39 \\
    40-ind-7 & 9 & 10 & 6 & 182 & F\;16 & 78  \\
    40-ind-8 & 8 & 8 & 6 & 167 & F\;15 & 122 \\
    40-ind-9 & 11 & 8 & 5 & 117 & 7 & 74\\ \hline
    40-psd-0 & 3 & 3 & 6 & 49 & 19 & 27 \\
    40-psd-1 & 3 & 2 & 6 & 59 & 7 & 20 \\
    40-psd-2 & 2 & 2 & 6 & 43 & 24 & 23 \\
    40-psd-3 & 3 & 2 & 5 & 46	& 6 & 22 \\
    40-psd-4 & 3 & 2 & 6 & 54 & INF & 19 \\
    40-psd-5 & 3 & 2 & 6 & 48	& INF & 31 \\
    40-psd-6 & 2 & 2 & 6 & 46 & 20 & 17 \\
    40-psd-7 & 2 & 3 & 6 & 47 & INF & 25 \\
    40-psd-8 & 2 & 2 & 5 & 57 & INF & 21 \\
    40-psd-9 & 3 & 2 & 6 & 37 & 7 & 24 \\ \hline
  \end{tabular}
\end{table}

\clearpage

\begin{table}[H]
  \centering
\caption{Run times (seconds) for Algorithm~
\ref{SLPCC} with Algorithm~\ref{SLPCC_BQP},
and with (\texttt{cauchy})
and without (\text{plain})
Algorithm~\ref{Cauchy}
as well as the NLP solvers
on 40 problem instances (INF = Termination at infeasible point, F = Termination with violated optimality tolerance).}\label{tab:bqp_all_running}
  \scriptsize
\begin{tabular}{lSSSSSS}
\hline
\multicolumn{1}{|l|}{\multirow{2}{*}{Prob-Inst.}}
& \multicolumn{6}{c|}{Run times (seconds)}
\\
\cline{2-7}
\multicolumn{1}{|l|}{} & 
\multicolumn{1}{c|}{plain} & \multicolumn{1}{c|}{cauchy} &
\multicolumn{1}{c|}{filterSQP} & \multicolumn{1}{c|}{IPOPT} &
\multicolumn{1}{c|}{MINOS} & \multicolumn{1}{c|}{SNOPT}\\
  \hline
20-ind-0 &  0.039 & 0.098
         &  0.0053 
         &  0.066 
         &  F\;0.01
         &  0.03 \\
20-ind-1 & 0.046 & 0.12
         & 0.0027
         & 0.073
         & 0.01	
         & 0.02 \\
20-ind-2 & 0.019 & 0.059
         & 0.0025
         & 0.040
         & 0.01
         & 0.01 \\
20-ind-3 & 0.028 & 0.076
         & 0.0024
         & 0.058
         & 0.01	
         & 0.03 \\
20-ind-4 & 0.024 & 0.092
         & 0.0030
         & 0.063 
         & F\;0.01
         & 0.04 \\
20-ind-5 & 0.019 & 0.083
         & 0.0037
         & 0.041
         & 0.01	
         & 0.02 \\
20-ind-6 & 0.039 & 0.12
         & 0.0036
         & 0.055
         & 0.01	
         & 0.03 \\
20-ind-7 & 0.021 & 0.059
         & 0.0027
         & 0.028
         & 0.01	
         & 0.02 \\
20-ind-8 & 0.030 & 0.066
         & 0.0035
         & 0.059
         & 0.01	
         & 0.02 \\
20-ind-9 & 0.018 & 0.093
         & 0.0032 
         & 0.050
         & 0.01	
         & 0.03 \\
\hline
20-psd-0 & 0.0093 & 0.020
         & 0.0030
         & 0.027
         & F\;0.01	
         & 0.01 \\
20-psd-1 & 0.010 & 0.026
         & 0.0034
         & 0.029
         & 0.01	
         & 0.01 \\
20-psd-2 & 0.0098 & 0.027
         & 0.0047
         & 0.019
         & 0.01
         & 0.01 \\
20-psd-3 & 0.0036 & 0.016
         & 0.0033
         & 0.011
         & F\;0.01
         & 0.01\\
20-psd-4 & 0.010 & 0.021
         & 0.0031
         & 0.024
         & F\;0.03
         & 0.01 \\
20-psd-5 & 0.0092 & 0.021
         & 0.0037
         & 0.024
         & F\;0.01
         & 0.01 \\
20-psd-6 & 0.022 & 0.041
         & 0.0032
         & 0.024
         & 0.01	
         & 0.01 \\
20-psd-7 & 0.0098 & 0.026
         & 0.0029
         & 0.022
         & 0.01	
         & 0.01 \\
20-psd-8 & 0.016 & 0.023
         & 0.0033
         & 0.027
         & 0.02
         & 0.01 \\
20-psd-9 & 0.010 & 0.029
         & 0.0044
         & 0.029
         & 0.01
         & 0.02 \\
\hline
40-ind-0 & 0.058 & 0.24
         & 0.016
         & 0.23
         & 0.03
         & F\;0.67 \\
40-ind-1 & 0.12 & 0.38
         & 0.011
         & 0.278
         & 0.02
         & 0.04 \\
40-ind-2 & 0.082 & 0.41
         & 0.016
         & 0.18
         & F\;0.03
         & 0.03 \\
40-ind-3 & 0.075 & 0.25
         & 0.014
         & 0.32
         & 0.03
         & 0.04\\
40-ind-4 & 0.13 & 0.40
         & 0.016
         & 0.34
         & 0.04
         & 0.08 \\
40-ind-5 & 0.12 & 0.35
         & 0.012
         & 0.36
         & F\;0.03
         & 0.03 \\
40-ind-6 & 0.087 & 0.27
         & 0.013
         & 0.12
         & F\;0.03
         & 0.03 \\
40-ind-7 & 0.093 & 0.39
         & 0.015
         & 0.33
         & F\;0.05
         & 0.05 \\
40-ind-8 & 0.082 & 0.34
         & 0.014
         & 0.305
         & F\;0.04	
         & 0.08 \\
40-ind-9 & 0.13 & 0.38
         & 0.013
         & 0.20
         & 0.03
         & 0.05 \\
\hline
40-psd-0 & 0.038 & 0.10
         & 0.016
         & 0.069
         & 0.05
         & 0.02\\
40-psd-1 & 0.040 & 0.080
         & 0.016
         & 0.087
         & 0.03
         & 0.02 \\
40-psd-2 & 0.024 & 0.081
         & 0.015
         & 0.068
         & 0.18
         & 0.02 \\
40-psd-3 & 0.043 & 0.093
         & 0.013
         & 0.059
         & 0.02
         & 0.02 \\
40-psd-4 & 0.040 & 0.077
         & 0.017
         & 0.085
         & INF
         & 0.02 \\
40-psd-5 & 0.041 & 0.11
         & 0.019
         & 0.062
         & INF
         & 0.03 \\
40-psd-6 & 0.023 & 0.081
         & 0.014
         & 0.061
         & 0.05
         & 0.01 \\
40-psd-7 & 0.025 & 0.13
         & 0.018
         & 0.060
         & INF	
         & 0.02  \\
40-psd-8 & 0.026 & 0.11
         & 0.015
         & 0.070
         & INF
         & 0.02 \\
40-psd-9 & 0.038 & 0.086
		 & 0.016
         & 0.053
         & 0.03
         & 0.02 \\
\hline
\end{tabular}
\end{table}

\clearpage

\begin{table}[H]
  \centering
\caption{Achieved objective values for Algorithm~
\ref{SLPCC} with Algorithm~\ref{SLPCC_BQP},
and with (\texttt{cauchy})
and without (\text{plain})
Algorithm~\ref{Cauchy}
as well as the NLP solvers
on 40 problem instances (INF = Termination at infeasible point, F = Termination with violated optimality tolerance).
\changed{For each problem instance, the lowest achieved objective values are printed in boldface.}}
\label{tab:bqp_all_objective}
  \scriptsize
\begin{tabular}{lrrrrrr}
\hline
\multicolumn{1}{|l|}{\multirow{2}{*}{Prob-Inst.}}
& \multicolumn{6}{c|}{Achieved objective value}
\\
\cline{2-7} 
\multicolumn{1}{|l|}{} & 
\multicolumn{1}{c|}{plain} & \multicolumn{1}{c|}{cauchy} &
\multicolumn{1}{c|}{filterSQP} & \multicolumn{1}{c|}{IPOPT} &
\multicolumn{1}{c|}{MINOS} & \multicolumn{1}{c|}{SNOPT}\\
  \hline
20-ind-0 & -6459.47 & -6459.47
         & \textbf{-7582.63}
         & -7550.90
         & F\;\textbf{-7582.63}
         & -6459.47 \\
20-ind-1 & \textbf{-7373.48} & -7135.19 
         & \textbf{-7373.48}
         & -7135.19
         & -7334.84
         & -7336.22 \\
20-ind-2 & -2618.47 & -2538.46
         & -2618.47
         & -2627.96
         & \textbf{-3098.27}
         & -2594.18 \\
20-ind-3 & \textbf{-3115.40} & \textbf{-3115.40}
         & -3038.25
         & -2953.03
         & -2946.15
         & \textbf{-3115.40} \\
20-ind-4 & -6272.72 & -6219.23
         & \textbf{-8735.77}
         & -7648.74
         & F\;-8370.13
         & -7819.34 \\
20-ind-5 & \textbf{-2829.78} & -2748.66
         & -2623.97
         & -2108.59
         & -2131.34
         & -2116.81 \\
20-ind-6 & \textbf{-8374.58} & \textbf{-8374.58} 
         & -8172.41
         & \textbf{-8374.58}
         & \textbf{-8374.58}
         & \textbf{-8374.58} \\
20-ind-7 & -1588.59 & -1588.59 
         & \textbf{-1923.79}
         & -1589.24
         & -1800.37
         & -1588.59 \\
20-ind-8 & \textbf{-2045.32} & \textbf{-2045.32} 
         & -1969.94
         & -1970.99
         & -1963.62
         & -1969.94 \\
20-ind-9 & \textbf{-5622.22} & \textbf{-5622.22}
         & -3245.52
         & -5399.08
         & -3098.11
         & \textbf{-5622.22} \\
\hline
20-psd-0 & \textbf{1147.55} & \textbf{1147.55}
         & 1147.69
         & \textbf{1147.55}
         & F\;1147.88
         & \textbf{1147.55} \\
20-psd-1 & 1043.67 & 1043.67
         & 1043.67
         & 1043.67
         & 1043.67
         & 1043.67 \\
20-psd-2 & 1772.49 & 1772.49
         & 1772.49
         & 1772.49
         & 1772.49
         & 1772.49 \\
20-psd-3 & 566.63 & 566.63 
         & \textbf{564.89}
         & \textbf{564.89}
         & F\;565.63
         & \textbf{564.89} \\
20-psd-4 & \textbf{896.98} & 898.00
         & \textbf{896.98}
         & \textbf{896.98}
         & F\;897.95
         & \textbf{896.98} \\
20-psd-5 & 1507.17 & 1507.17
         & \textbf{1487.97}
         & \textbf{1487.97}
         & F\;1488.45
         & 1489.83 \\
20-psd-6 & 751.21 & 751.21 
         & \textbf{725.97}
         & \textbf{725.97}
         & \textbf{725.97}
         & \textbf{725.97} \\
20-psd-7 & 2050.00 & 2050.00 
         & 2050.00
         & 2050.00
         & 2050.00
         & 2050.00 \\
20-psd-8 & 1109.89 & 1109.89
         & 1109.89
         & 1109.89
         & 1109.89
         & 1109.89 \\
20-psd-9 & 1090.73 & 1090.73 
         & \textbf{1075.17}
         & 1075.23
         & \textbf{1075.17}
         & 1081.99 \\
\hline
40-ind-0 & -7999.59 & -7997.27
         & \textbf{-8254.03}
         & -8089.13
         & -8197.27
         & F\;-8137.61 \\
40-ind-1 & -17702.74 & -17702.74 
         & -17891.48
         & -17702.74
         & \textbf{-17970.51}
         & -17702.74 \\
40-ind-2 & \textbf{-23716.62} & -23462.24 
         & -23315.13
         & -22970.72
         & F\;-22994.94
         & -23117.33 \\
40-ind-3 & -7710.66 & -7589.43 
         & \textbf{-7866.74}
         & -7589.43
         & -7735.33
         & -7800.41 \\
40-ind-4 & -13571.15 & -17301.43 
         & -14451.51
         & -14746.28
         & -14746.28
         & \textbf{-16984.42} \\
40-ind-5 & -10395.51 & -10395.51 
         & -10668.66
         & -10668.66
         & F\;-10456.41
         & \textbf{-10672.87} \\
40-ind-6 & -4889.03 & -4890.66 
         & -4851.44
         & -4728.64
         & F\;-4767.99
         & \textbf{-5073.21} \\
40-ind-7 & -17301.07 & -17147.55 
         & \textbf{-20378.55}
         & -17361.50
         & F\;-17412.02
         & -17835.51 \\
40-ind-8 & -14414.19 & -14198.29
         & \textbf{-15355.83}
         & -14339.09
         & F\;-13804.13
         & -14309.26 \\
40-ind-9 & -10073.14 & -10073.14 
         & \textbf{-10848.63}
         & -10732.42
         & \textbf{-10848.63}
         & -10633.96 \\
\hline
40-psd-0 & 4009.29 & 4009.29 
         & \textbf{4002.46}
         & 4002.77
         & 4005.18
         & 4009.60 \\
40-psd-1 & 4147.27 & 4147.27 
         & \textbf{4130.51}
         & \textbf{4130.51}
         & \textbf{4130.51}
         & 4147.27 \\
40-psd-2 & \textbf{3105.85} & 3116.39
         & \textbf{3105.85}
         & \textbf{3105.85}
         & 3106.95
         & \textbf{3105.85} \\
40-psd-3 & 4944.42 & 4944.42 
         & \textbf{4941.11}
         & \textbf{4941.11}
         & \textbf{4941.11}
         & \textbf{4941.11} \\
40-psd-4 & 2452.84 & 2452.84 
         & \textbf{2446.08}
         & 2450.67
         & INF
         & 2452.84 \\
40-psd-5 & 2365.25 & 2365.25 
         & \textbf{2364.77}
         & \textbf{2364.77}
         & INF
         & 2365.25 \\
40-psd-6 & 4035.26 & 4035.26 
         & \textbf{4035.23}
         & \textbf{4035.23}
         & \textbf{4035.23}
         & 4035.26 \\
40-psd-7 & 3154.05 & 3154.05 
         & \textbf{3152.99}
         & \textbf{3152.99}
         & INF
         & 3154.05 \\
40-psd-8 & 2220.87 & 2220.87 
         & 2220.35
         & 2222.91
         & INF
         & \textbf{2220.32} \\
40-psd-9 & \textbf{3657.14} & \textbf{3657.14} 
         & \textbf{3657.14}
         & \textbf{3657.14}
         & \textbf{3657.14}
         & 3657.88 \\
\hline
\end{tabular}
\end{table}

\clearpage

\begin{table}[H]
  \centering
\caption{Performance for Algorithm~
\ref{SLPCC} with Algorithm~\ref{SLPCC_BQP},
and with (\texttt{cauchy})
and without (\text{plain})
Algorithm~\ref{Cauchy}
on nonlinear test instances.
\changed{For each problem instance, the lowest (total) iteration numbers are printed in boldface.}}
\label{tab:nlpcc_complete_results}
  \scriptsize
\setlength{\tabcolsep}{4pt}\begin{tabular}{lSSrrrrrr}
\hline
\multicolumn{1}{|l|}{\multirow{2}{*}{Prob-Inst.}}
& \multicolumn{2}{c|}{Obj.\ val.}
& \multicolumn{2}{c|}{outer iters}
& \multicolumn{2}{c|}{\revised{R2.14}{total} inner iters}
& \multicolumn{2}{c|}{BQP iters}\\
\cline{2-9} 
\multicolumn{1}{|l|}{} & 
\multicolumn{1}{c|}{plain} & \multicolumn{1}{c|}{cauchy} &
\multicolumn{1}{c|}{plain} & \multicolumn{1}{c|}{cauchy} &
\multicolumn{1}{c|}{plain} & \multicolumn{1}{c|}{cauchy} &
\multicolumn{1}{c|}{plain} & \multicolumn{1}{c|}{cauchy}\\
  \hline
20-fletcher0 & 2246.86 & 2246.86 & \textbf{6} & 8 & \textbf{184} & 366 & \textbf{16} & 17 \\
20-fletcher1 & 4046.86 & 4046.86 & 14 & \textbf{13} & 228 & \textbf{129} & 44 & \textbf{40} \\
20-himmelblau0 & 826.29 & 826.29 & 9 & 9 & 93 & \textbf{49} & 9 & 9 \\
20-himmelblau1 & 230.31 & 230.31 & 9 & \textbf{8} & 93 & \textbf{58} & 9 & \textbf{8} \\
20-mccormick0 & 58.93 & 58.93 & 3 & 3 & \textbf{21} & 23 & 3 & \textbf{2} \\
20-mccormick1 & 58.93 & 58.93 & 4 & \textbf{3} & 60 & \textbf{23} & 4 & \textbf{2} \\
20-powell0 
& \num{9.6e-10} 
& \num{7.5e-10}
& 17 & \textbf{16} & 449 & \textbf{364} & 37 & \textbf{35} \\
20-powell1 
& \num{3.0e-9}
& \num{1.1e-18}
& 15 & \textbf{6} & 441 & \textbf{12} & 28 & \textbf{7} \\
20-rosenbrock0 & 58.61 & 58.61 & 3 & \textbf{2} & 81 & \textbf{40} & 4 & \textbf{2} \\
20-rosenbrock1 & 58.61 & 58.61 & 3 & \textbf{2} & 83 & \textbf{40} & 4 & \textbf{2} \\
\hline
40-fletcher0 & 4246.86 & 4246.86 & 9 & \textbf{4} & 445 & \textbf{72} & 18 & \textbf{8} \\
40-fletcher1 & 8046.86 & 8046.86 & 18 & \textbf{16} & 512 & \textbf{222} & 48 & \textbf{39} \\
40-himmelblau0 & 1652.58 & 1652.58 & 9 & 9 & 93 & \textbf{49} & 10 & \textbf{9} \\
40-himmelblau1 & 460.63 & 460.63 & 9 & \textbf{8} & 91 & \textbf{58} & 9 & \textbf{8} \\
40-mccormick0 & 118.93 & 118.93 & 3 & 3 & \textbf{21} & 23 & 3 & \textbf{2} \\
40-mccormick1 & 118.93 & 118.93 & 4 & \textbf{3} & 60 & \textbf{23} & 4 & \textbf{2} \\
40-powell0 
& \num{9.8e-9}
& \num{1.5e-9}
& 16 & 16 & 396 & \textbf{364} & 35 & 35 \\
40-powell1 
& \num{5.9e-9}
& \num{2.1e-18}
& 15 & \textbf{6} & 441 & \textbf{12} & 28 & \textbf{7} \\
40-rosenbrock0 & 118.20 & 118.20 & 3 & \textbf{2} & 81 & \textbf{40} & 4 & \textbf{2} \\
40-rosenbrock1 & 118.21 & 118.21 & 3 & \textbf{2} & 83 & \textbf{40} & 4 & \textbf{2} \\
\hline
\end{tabular}
\end{table}

\begin{table}[htb]
  \centering
  \caption{Number of major/outer iterations for solving instances of bound-constrained nonlinear MPCCs (INF = Termination at infeasible point, F = Termination with violated
  optimality tolerance).}
  \label{tab:nlpcc_all_iterations}
  \scriptsize  
\begin{tabular}{lrrrrrr}  
\hline
\multicolumn{1}{|l|}{\multirow{2}{*}{Prob-Inst.}}
& \multicolumn{6}{c|}{Number of major/outer iterations}
\\
\cline{2-7} 
\multicolumn{1}{|l|}{} 
& \multicolumn{1}{c|}{plain}
& \multicolumn{1}{c|}{cauchy}
& \multicolumn{1}{c|}{filterSQP}
& \multicolumn{1}{c|}{IPOPT}
& \multicolumn{1}{c|}{MINOS}
& \multicolumn{1}{c|}{SNOPT}\\ \hline
    20-fletcher-0 & 6 & F\;\phantom{0}8 & 49 & F\;3000 & F\;\phantom{0}7 & 45\\
    20-fletcher-1 & 14 & 13 & 13 & F\;3000 & F\;\phantom{0}8 & 62 \\
    20-himmelblau-0 & 7 & 7 & 13 & 107 & 7 & 277 \\
    20-himmelblau-1 & 9 & 8 & 13 & 28 & F\;\phantom{0}5 & 20 \\
    20-mccormick-0 & 3 & 3 & 4 & 75 & 4 & 15 \\
    20-mccormick-1 & 4 & 3 & 4 & 116 & 5 & 11 \\
    20-powell-0 & 17 & 16 & 15 & 146 & 6 & 29 \\
    20-powell-1 & 15 & 6 & 15 & 60 & 13 & 25 \\
    20-rosenbrock-0 & 3 & 2 & 13 & 80 & 7 & 107 \\
    20-rosenbrock-1 & 3 & 2 & 9 & 105 & INF & 86 \\
    \hline
    40-fletcher-0 & F\;\phantom{0}9 & 4 & 12 & F\;3000 & F\;\phantom{0}7 & 176 \\
    40-fletcher-1 & F\;\phantom{}18 & 16 & 13 & 144 & F\;\phantom{0}9 & 110 \\
    40-himmelblau-0 & 9 & 9 & 11 & 102 & F\;\phantom{0}3 & 140 \\
    40-himmelblau-1 & 9 & 8 & 13 & 32 & F\;\phantom{0}3 & 22 \\
    40-mccormick-0 & 3 & 3 & 4 & 175 & 5 & 11 \\
    40-mccormick-1 & 4 & 3 & 4 & 133 & 5 & 10 \\
    40-powell-0 & 16 & 16 & 15 & 181 & 9 & 37 \\
    40-powell-1 & 15 & 6 & 15 & 49 & 6 & 54 \\
    40-rosenbrock-0 & 3 & 2 & 13 & 86 & INF & 102 \\
    40-rosenbrock-1 & 3 & 2 & 9 & 95 & INF & 54 \\
    \hline
  \end{tabular}
\end{table}

\begin{table}[htb]
  \centering
  \caption{Run times (seconds) for Algorithm~
\ref{SLPCC} with Algorithm~\ref{SLPCC_BQP},
and with (\texttt{cauchy})
and without (\text{plain})
Algorithm~\ref{Cauchy}
as well as the NLP solvers
for solving instances of bound-constrained nonlinear MPCCs
(INF = Termination at infeasible point, F = Termination with violated
  optimality tolerance).}
  \label{tab:nlpcc_all_running}
  \scriptsize  
\begin{tabular}{lSSSSSS}  
\hline
\multicolumn{1}{|l|}{\multirow{2}{*}{Prob-Inst.}}
& \multicolumn{6}{c|}{Run times (seconds)}
\\
\cline{2-7} 
\multicolumn{1}{|l|}{} 
& \multicolumn{1}{c|}{plain}
& \multicolumn{1}{c|}{cauchy}
& \multicolumn{1}{c|}{filterSQP}
& \multicolumn{1}{c|}{IPOPT}
& \multicolumn{1}{c|}{MINOS}
& \multicolumn{1}{c|}{SNOPT}\\ \hline
    20-fletcher-0 
    & 0.061 & F\;1.107
    & 0.013 & 0.625 & F\;0.01 & 0.05\\
	20-fletcher-1	
	& 0.093  & 0.556
	& 0.0048 & 0.637 & F\;0.01	& 0.06\\
    20-himmelblau-0
    & 0.045 & 0.251
    & 0.0042 & 0.032 & 0.01 & 0.22\\
    20-himmelblau-1
    & 0.045 & 0.214
    & 0.0038 & 0.010 & F\;0.01 & 0.03 \\
    20-mccormick-0
    & 0.013 & 0.047
    & 0.0015 & 0.029 & 0.00 & 0.01 \\
    20-mccormick-1
    & 0.025 & 0.045
    & 0.0015 & 0.040 & 0.00 & 0.01\\
    20-powell-0
    & 0.154 & 1.849
    & 0.0048 & 0.044 & 0.01 & 0.05 \\
    20-powell-1
    & 0.144 & 0.081
    & 0.0052 & 0.011 & 0.01 & 0.03 \\
    20-rosenbrock-0 
    & 0.028 & 0.171
    & 0.0057 & 0.027 & 0.01 & 0.12 \\
    20-rosenbrock-1 
    & 0.029 & 0.189
    & 0.0050 & 0.031 & INF	& 0.08 \\
    \hline
    40-fletcher-0 
    & F\;0.088 & 0.323
    & 0.013 & 0.801 & F\;0.01 & 0.26 \\
    40-fletcher-1 
    & F\;0.324 & 1.672
    & 0.0091 & 0.057 & F\;0.01	& 0.13 \\
    40-himmelblau-0 
    & 0.104 & 0.609
    & 0.010 & 0.032 & F\;0.01 & 0.16 \\
    40-himmelblau-1 
    & 0.103 & 0.517
    & 0.010 & 0.015 & F\;0.01 & 0.03 \\
    40-mccormick-0
    & 0.033 & 0.107
    & 0.0025 & 0.071 & 0.01	& 0.01 \\
    40-mccormick-1
    & 0.054 & 0.106
    & 0.0023 & 0.062 & 0.01 & 0.01 \\
    40-powell-0 
    & 0.266 & 4.319
    & 0.014 & 0.079 & 0.02 & 0.07 \\
    40-powell-1 
    & 0.264 & 0.206
    & 0.015 & 0.016 & 0.01 & 0.12 \\
    40-rosenbrock-0 
    & 0.052 & 0.396
    & 0.018 & 0.032 & INF & 0.27 \\
    40-rosenbrock-1
    & 0.053 & 0.438
    & 0.015 & 0.038 & INF & 0.18 \\
    \hline
  \end{tabular}
\end{table}

\begin{table}[htb]
  \centering
  \caption{Achieved objective values for Algorithm~
\ref{SLPCC} with Algorithm~\ref{SLPCC_BQP},
and with (\texttt{cauchy})
and without (\text{plain})
Algorithm~\ref{Cauchy}
as well as the NLP solvers
for solving instances of bound-constrained nonlinear MPCCs
(INF = Termination at infeasible point, F = Termination with violated
  optimality tolerance).
\changed{For each problem instance, the lowest achieved objective values are in boldface.}}
  \label{tab:nlpcc_all_obj}
  \scriptsize  
\begin{tabular}{lrrrrrr}  
\hline
\multicolumn{1}{|l|}{\multirow{2}{*}{Prob-Inst.}}
& \multicolumn{6}{c|}{Achieved objective value}
\\
\cline{2-7} 
\multicolumn{1}{|l|}{} 
& \multicolumn{1}{c|}{plain}
& \multicolumn{1}{c|}{cauchy}
& \multicolumn{1}{c|}{filterSQP}
& \multicolumn{1}{c|}{IPOPT}
& \multicolumn{1}{c|}{MINOS}
& \multicolumn{1}{c|}{SNOPT}\\ \hline
    20-fletcher-0 
    & 2246.86 & F\;2246.86
    & 2246.86 & 2246.86 & F\;2246.86 & 2246.86 \\
	20-fletcher-1	
	& 4046.86 & 4046.86
	& 4032.29 & \textbf{3930.27} & F\;3988.57 & 3988.57 \\
    20-himmelblau-0
    & \textbf{826.29} & \textbf{826.29}
    & \textbf{826.29} & 1840.62 & 1840.63 & 1029.16 \\
    20-himmelblau-1
    & \textbf{230.31} & \textbf{230.31}
    & 826.29 & 1422.27 & F~\;954.29 & 1422.27 \\
    20-mccormick-0
    & \textbf{58.93} & \textbf{58.93}
    & \textbf{58.93} & \textbf{58.93} & \textbf{58.93} & 59.92 \\
    20-mccormick-1
    & 58.93 & 58.93
    & 58.93 & 58.93 & 58.93 & 58.93\\
    20-powell-0
    & 9.6$\cdot 10^{-10}$ & 7.5$\cdot 10^{-10}$
    & 3.2$\cdot 10^{-10}$ & 8.0$\cdot 10^{-8}$ & \textbf{6.6}$\mathbf{\cdot 10^{-14}}$ & 1.3$\cdot 10^{-13}$ \\
    20-powell-1
    & 3.0$\cdot 10^{-9}$ & \textbf{1.1}$\mathbf{\cdot 10^{-18}}$
    & 2.2$\cdot 10^{-10}$ & 8.7$\cdot 10^{-8}$ & 4.7$\cdot 10^{-15}$ & 6.5$\cdot 10^{-5}$ \\
    20-rosenbrock-0 
    & 58.60 & 58.60
    & 55.88 & 58.39 & \textbf{40.06} & 44.02 \\
    20-rosenbrock-1 
    & 58.61 & 58.61
    & \textbf{58.34} & 58.61 & INF & 58.39 \\
    \hline
    40-fletcher-0 
    & F\;\textbf{4246.86} & \textbf{4246.86}
    & \textbf{4246.86} & \textbf{4246.86} & F\;\textbf{4246.86} & 4679.15 \\
    40-fletcher-1 
    & F\;8046.86 & 8046.86
    & 8032.29 & \textbf{7784.54} & F\;7988.57 & 7959.42 \\
    40-himmelblau-0 
    & \textbf{1652.58} & \textbf{1652.58}
    & \textbf{1652.58} & 1754.02 & F\;5118.83 & 1754.02 \\
    40-himmelblau-1 
    & \textbf{460.63} & \textbf{460.63}
    & 1652.58 & 2844.54 & F\;4074.33 & 2844.54 \\
    40-mccormick-0
    & 118.93 & 118.93
    & 118.93 & 118.93 & 118.93 & 118.93 \\
    40-mccormick-1
    & 118.93 & 118.93
    & 118.93 & 118.93 & 118.93 & 118.93 \\
    40-powell-0 
    & 9.8$\cdot 10^{-9}$ & 1.5$\cdot 10^{-9}$
    & 6.4$\cdot 10^{-10}$ & 5.9$\cdot 10^{-8}$ & \textbf{8.4}$\mathbf{\cdot 10^{-13}}$ & 4.5$\cdot 10^{-10}$ \\
    40-powell-1 
    & 5.9$\cdot 10^{-9}$ & \textbf{2.1}$\mathbf{\cdot 10^{-18}}$
    & 4.4$\cdot 10^{-10}$ & 1.6$\cdot 10^{-7}$ & 2.7$\cdot 10^{-18}$ & 2.1$\cdot 10^{-8} $\\
    40-rosenbrock-0 
    & 118.20 & 118.20
    & 115.48 & 118.20 & INF & \textbf{113.51} \\
    40-rosenbrock-1
    & 118.21 & 118.21
    & \textbf{117.99} & 118.21 & INF & \textbf{117.99} \\
    \hline
  \end{tabular}
\end{table}

\clearpage

\section{Example -- Augmented Lagrangian Integration}\label{sec:alintegration}

To demonstrate one approach for integrating our
algorithm into an augmented Lagrangian method, we have implemented the
algorithm for bound- and equality-constrained NLPs
from~\cite[Section\,17.4]{nocedal2006no} and replaced the subproblems by 
\eqref{eq:alsubproblem} as described in Section~\ref{S:intro}.

We demonstrate the observed behavior of such an approach on the
\texttt{nash1} problem from \texttt{MacMPEC}, which is stated (including slack variables) below:
\[
\begin{aligned}
\min_{x_0,x_1,x_2}\ &\frac{1}{2}\left((x_{0,1} - x_{0,3})^2 + (x_{0,2} - x_{0,4})^2\right) \\
\text{s.t.}\ 
& x_{1,1} = 15 - x_{0,2} - x_{0,3},\ \ x_{1,2} = 15 - x_{0,1} + x_{0,4}, \\
& x_{2,1} = 34 - 2x_{0,3} - \frac{8}{3}x_{0,4},\ \ x_{2,2} = 24.25 - 1.25 x_{0,3} - 2 x_{0,4}, \\
& 0 \le x_{1} \perp x_{2} \ge 0, \\
& 0 \le x_{0,1} \le 10,\ \ 0 \le x_{0,2} \le 10.
\end{aligned}
\]

The augmented Lagrangian method converges
to the strongly stationary point $(x_0^T, x_1^T, x_2^T) = (5, 9, 5, 9, 1, 19, 0, 0)$
within four iterations, where the constraints are satisfied to an accuracy of
$10^{-10}$ in the $\ell_2$-norm and strong stationarity
(computed as the $\ell_\infty$-norm residual of the gradient of the Lagrangian
projected to bounds and complementarity conditions)
is satisfied to an accuracy of $10^{-10}$ as well. Tightening these criteria
further results in numerical instabilities in later iterations (multipliers
and penalty parameters start diverging while the obtained
point does not move).
To help demonstrate the iterations,
we illustrate the convergence of the method in Figure~\ref{fig:nlag}.
\begin{figure}
  \subfloat[][]
    {\includegraphics[clip, width=0.24\linewidth]{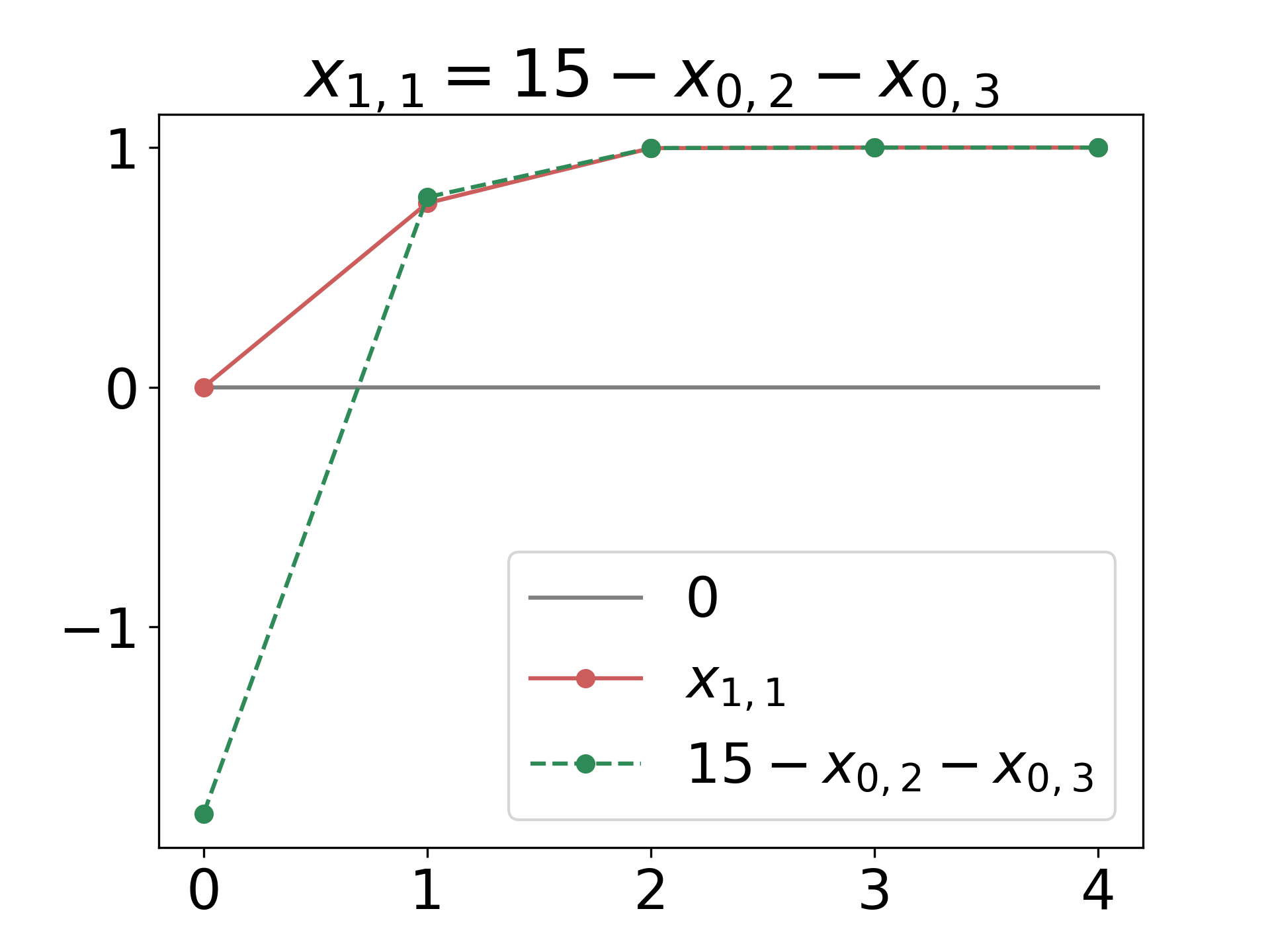}}
    \hfill
  \subfloat[][]
    {\includegraphics[clip, width=0.24\linewidth]{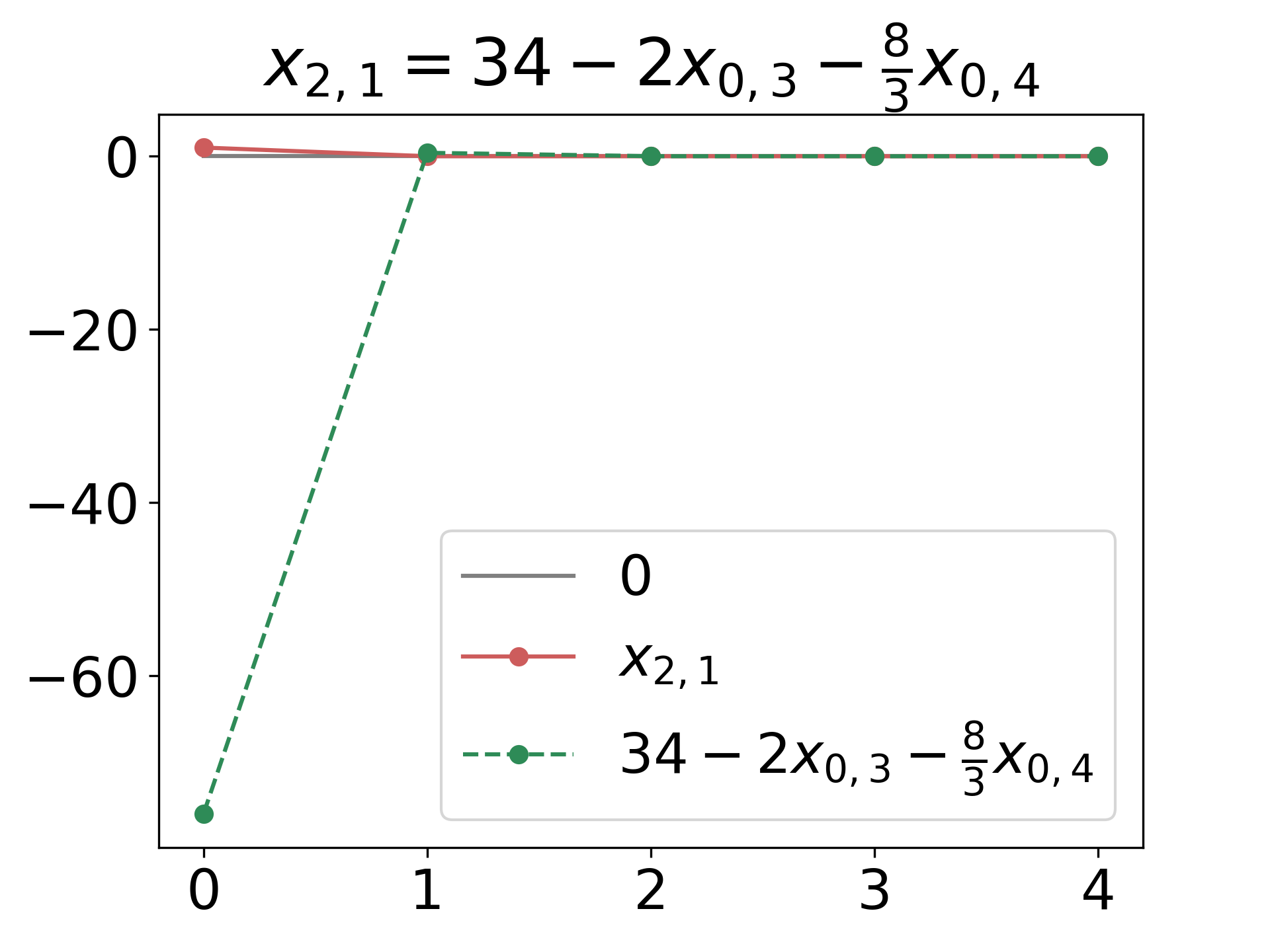}}
    \hfill
\subfloat[][]
    {\includegraphics[clip, width=0.24\linewidth]{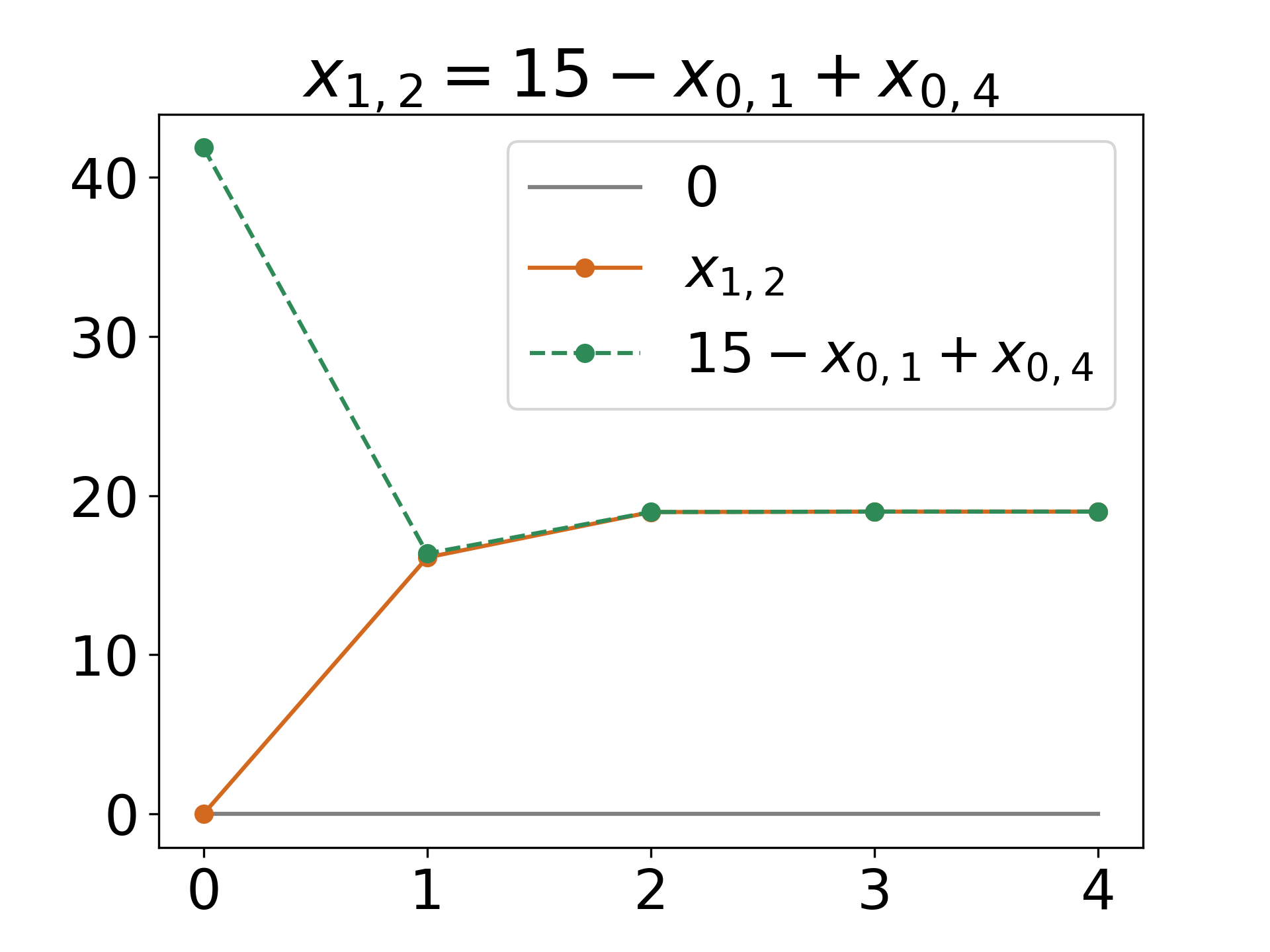}}
    \hfill
\subfloat[][]
    {\includegraphics[clip, width=0.24\linewidth]{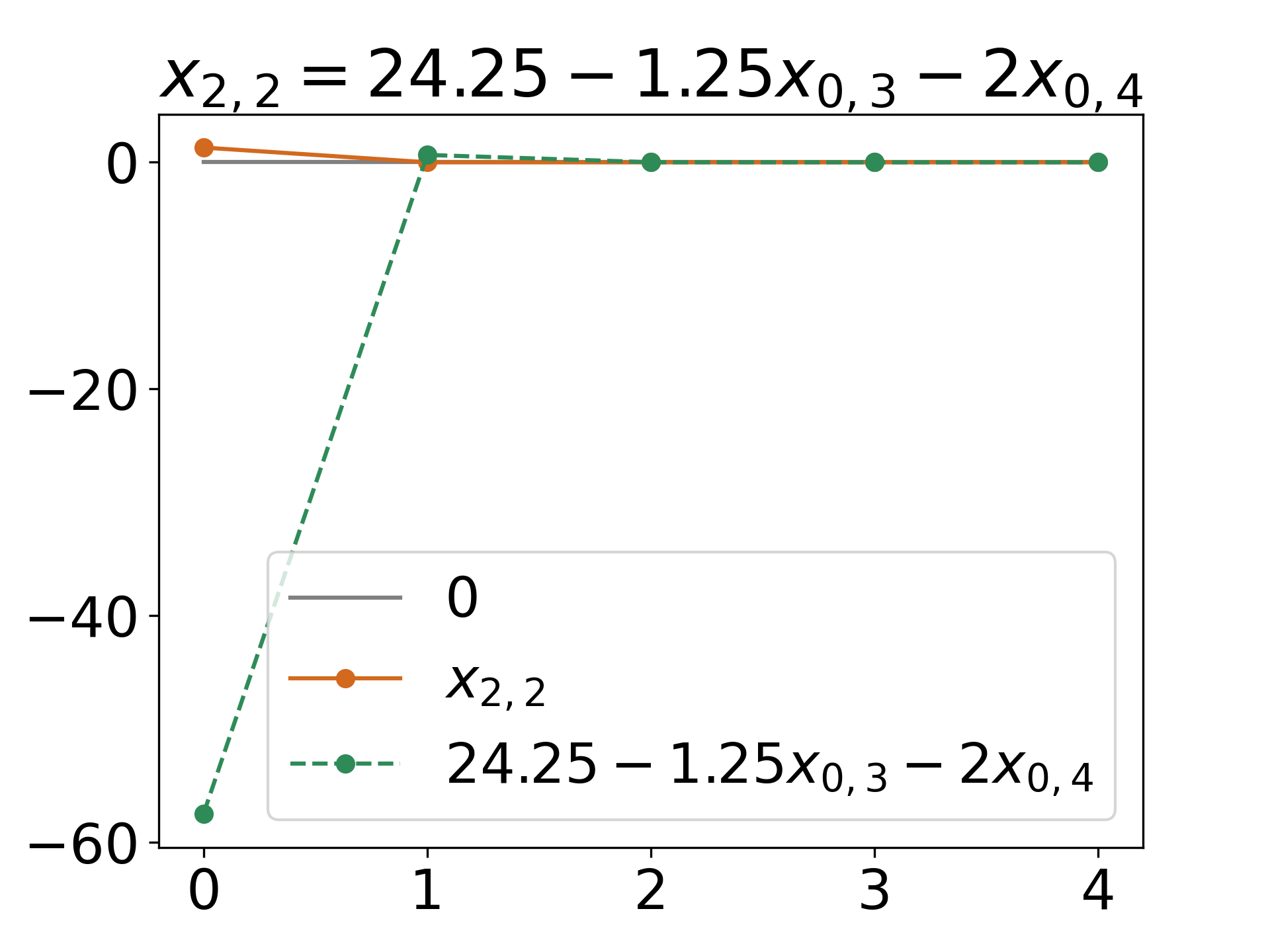}}\\   
  \subfloat[][]
    {\includegraphics[clip, width=0.24\linewidth]{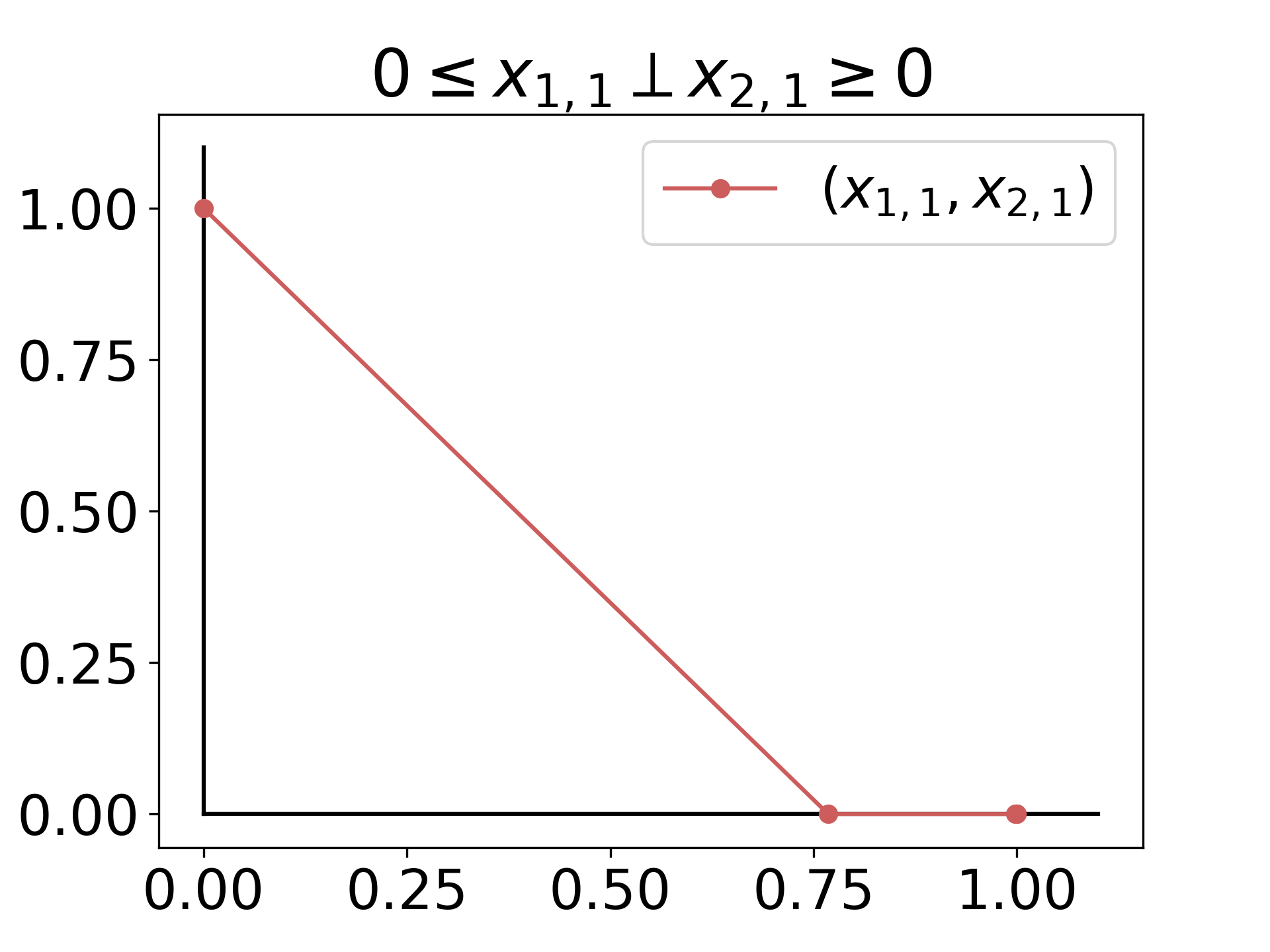}}
    \hfill
  \subfloat[][]
    {\includegraphics[clip, width=0.24\linewidth]{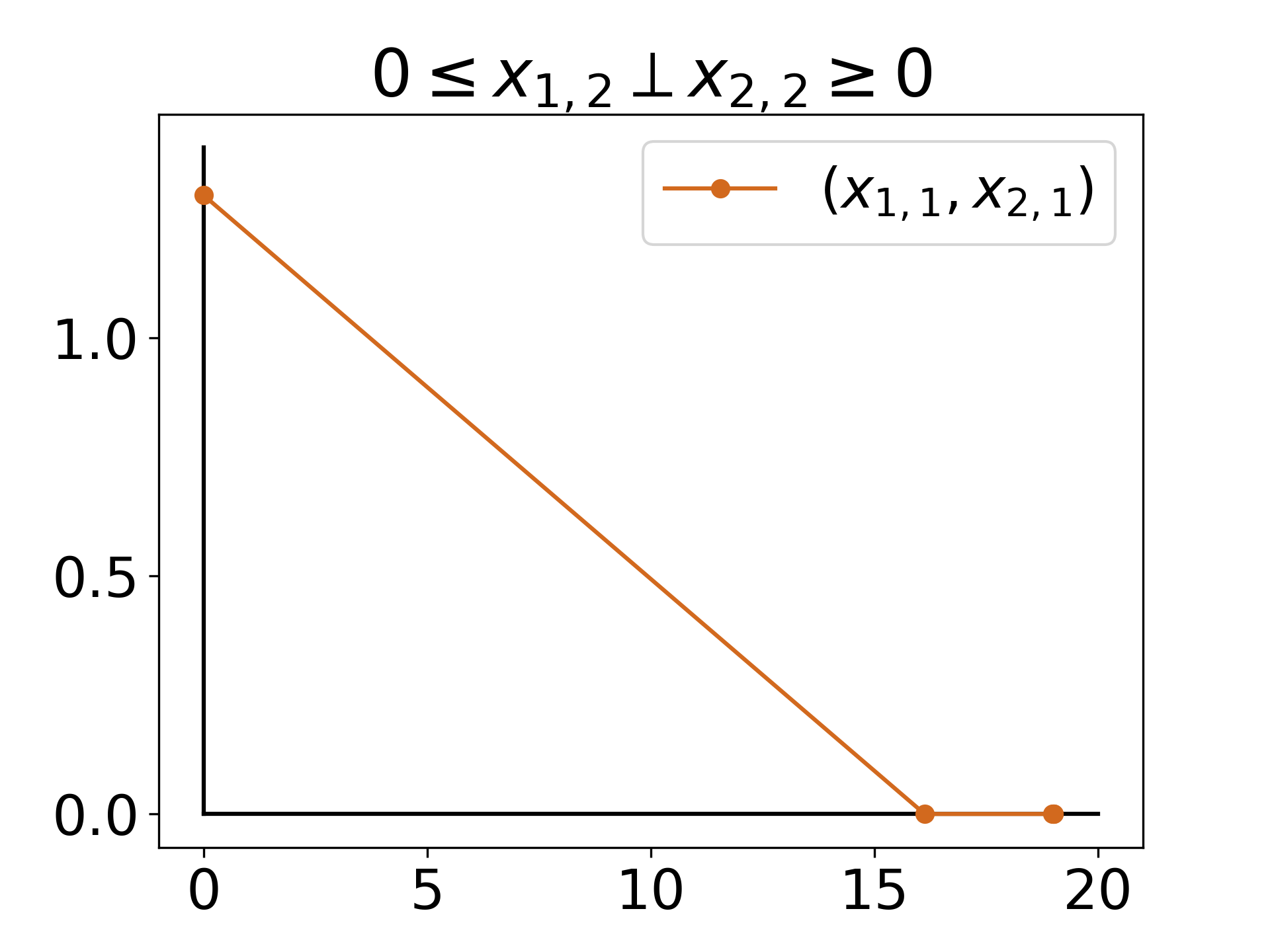}}
    \hfill
\subfloat[][]
    {\includegraphics[clip, width=0.24\linewidth]{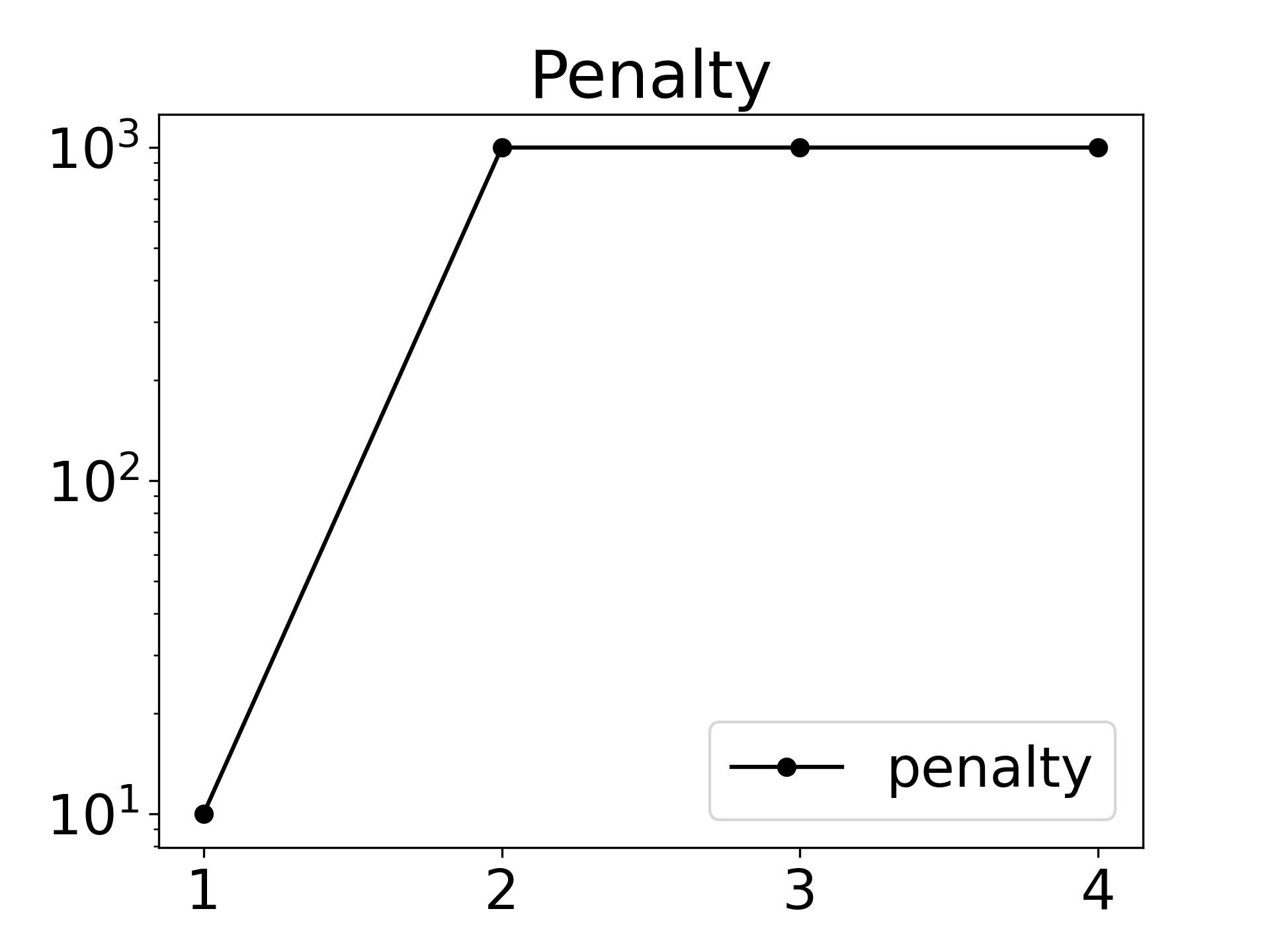}}
    \hfill
\subfloat[][]
    {\includegraphics[clip, width=0.24\linewidth]{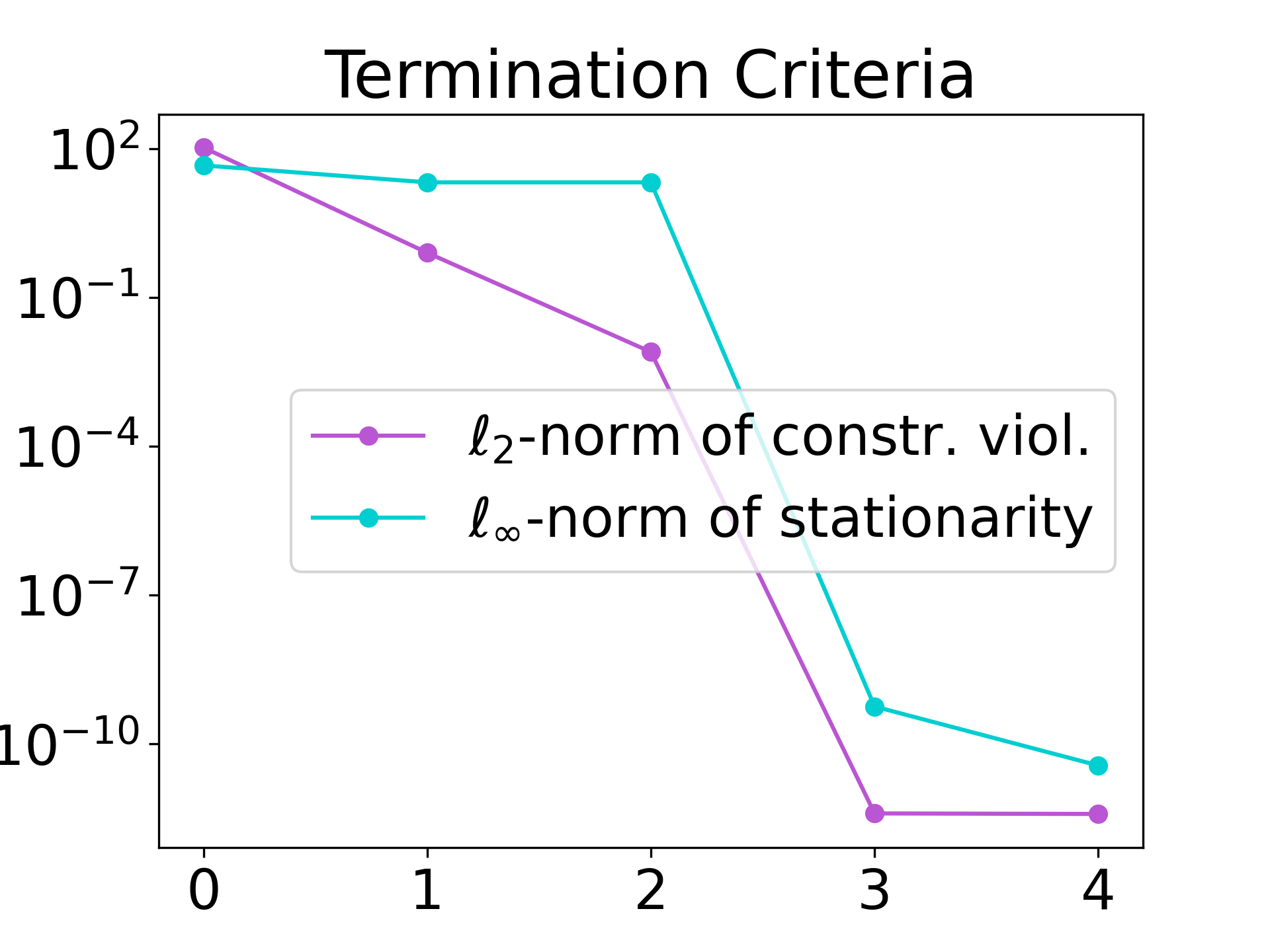}}\\
  \caption{Visualization of the augmented Lagrangian method
  with nonsmooth subproblems,
  convergence of the constraints and slacks for the nonsmooth constraints
  over the iterations ((a)-(d) in top row),
  feasibility of the complementarity conditions ((e)-(f) in bottom row),
  penalty parameter over the iterations ((g) in bottom row) and
  Convergence of optimality and feasibility over the iterations
  ((h) in bottom row).}\label{fig:nlag}
\end{figure}
We compare this approach to an augmented Lagrangian method applied to a
problem where the
complementarity constraint 
$x_{1,i}x_{2,i} \le 0$ is penalized in the objective.
We use again the method from~\cite[Section\,17.4]{nocedal2006no}
(just adapting the multiplier update for the inequality constraints),
and use L-BFGS-B for the subproblem solves.
We provide the same starting point and converge to the same point
although we obtain a slightly worse accuracy of $10^{-7}$ and the algorithm
takes more iterations.
An analogous plot of Figure~\ref{fig:nlag} is given in 
Figure~\ref{fig:nlag_fal}. That our proposed method satisfies
the complementarity constraints throughout all iterations is clearly visible.
\begin{figure}
  \subfloat[][]
    {\includegraphics[clip, width=0.24\linewidth]{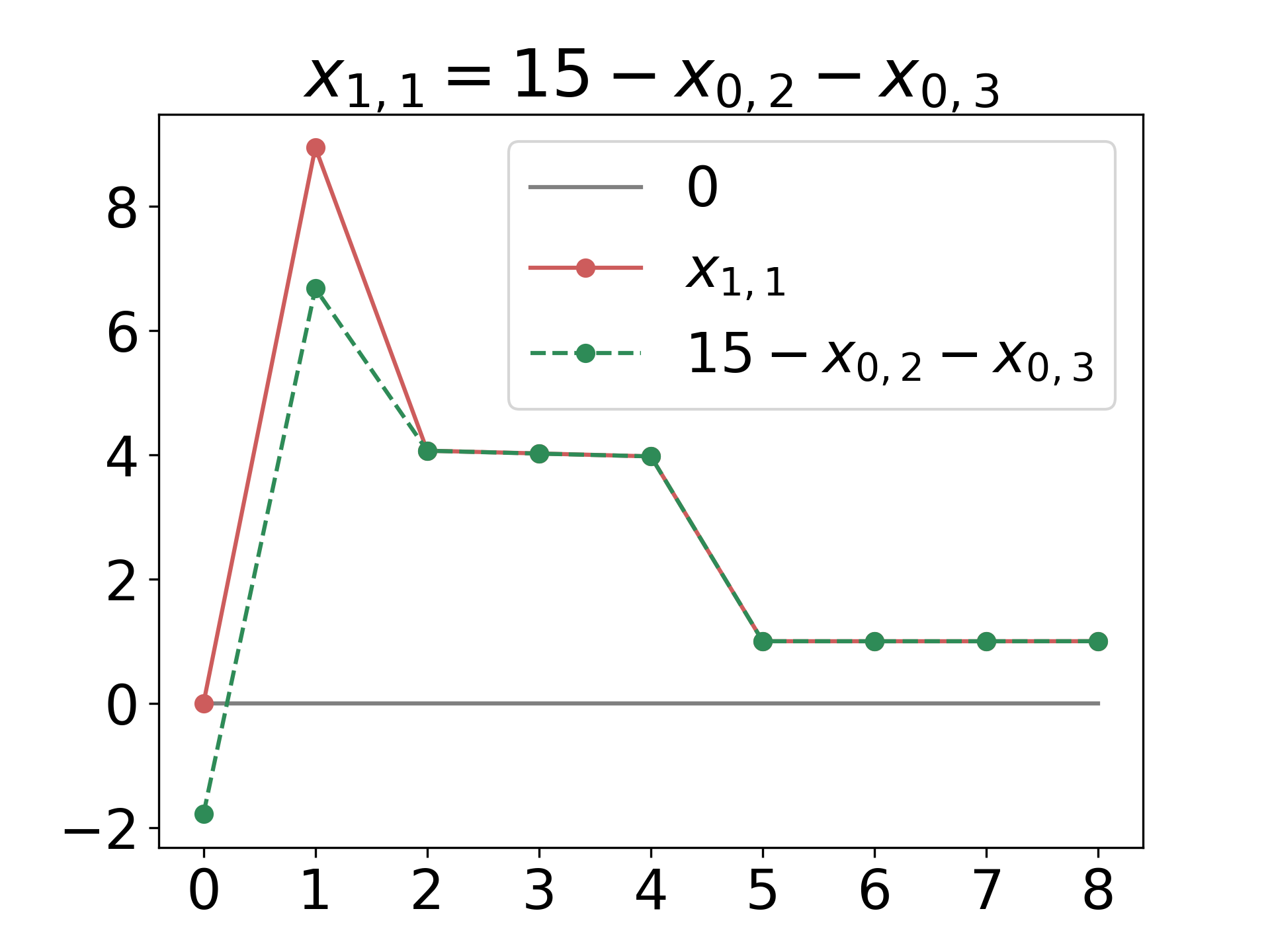}}
    \hfill
  \subfloat[][]
    {\includegraphics[clip, width=0.24\linewidth]{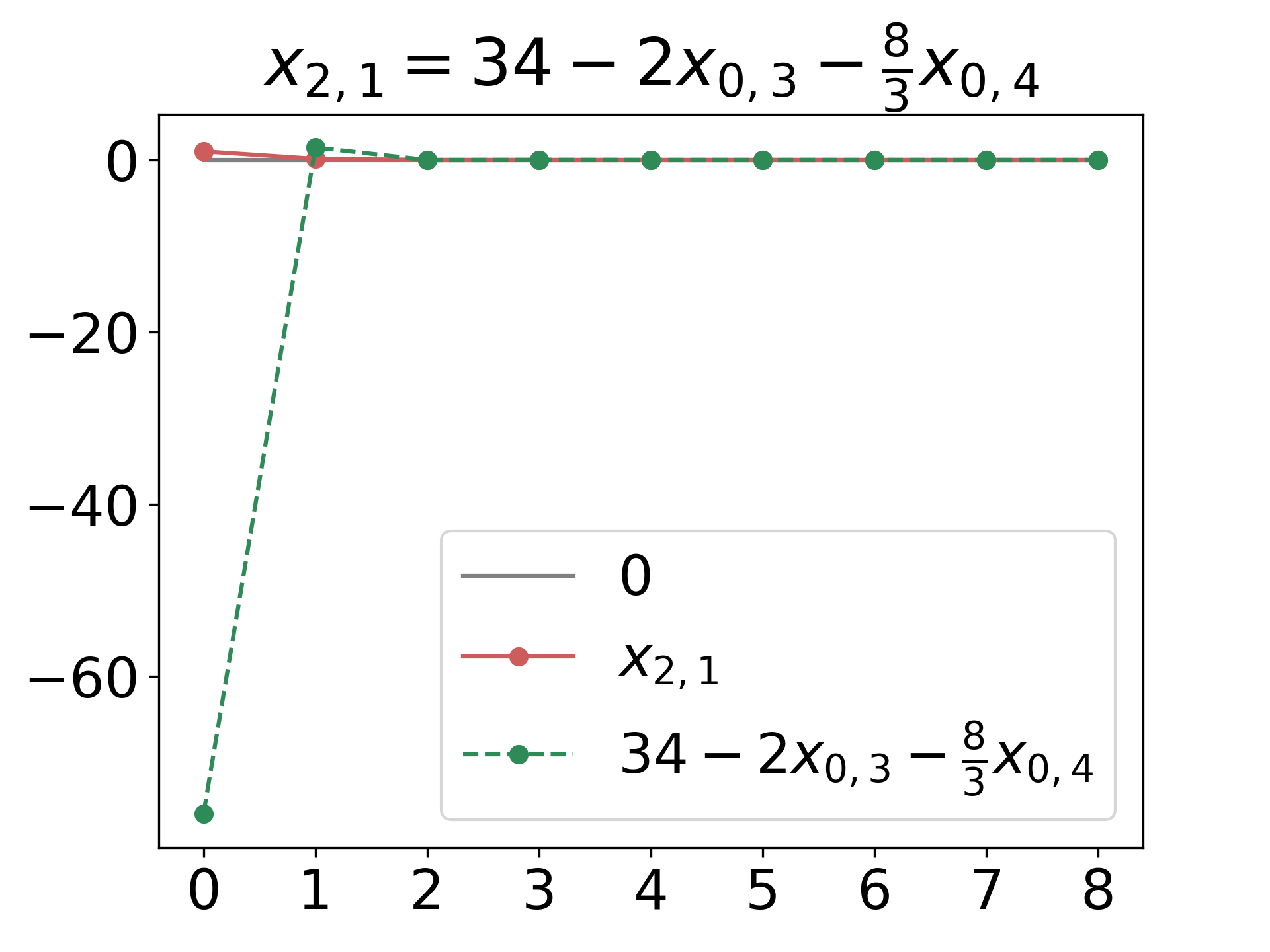}}
    \hfill
\subfloat[][]
    {\includegraphics[clip, width=0.24\linewidth]{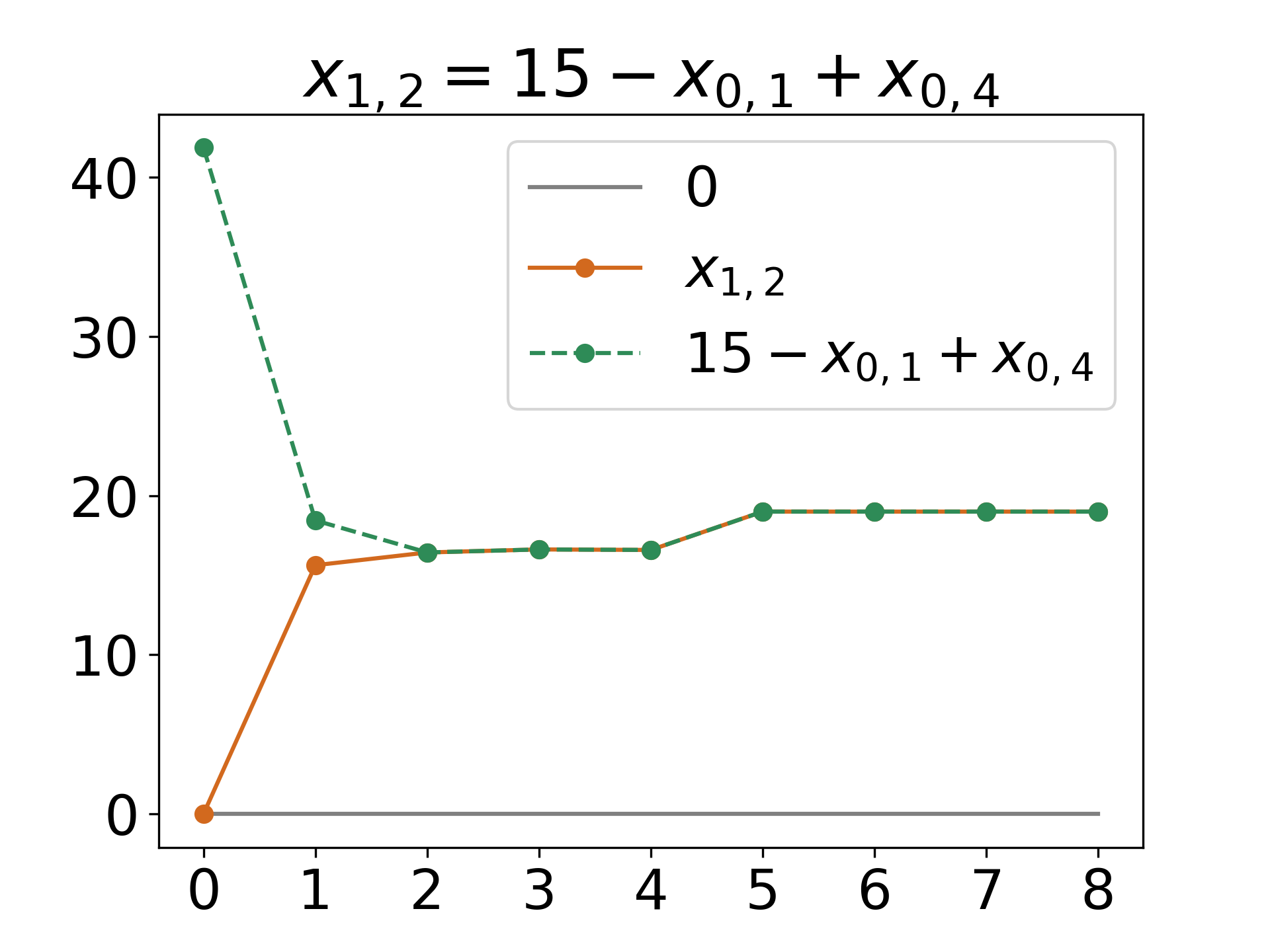}}
    \hfill
\subfloat[][]
    {\includegraphics[clip, width=0.24\linewidth]{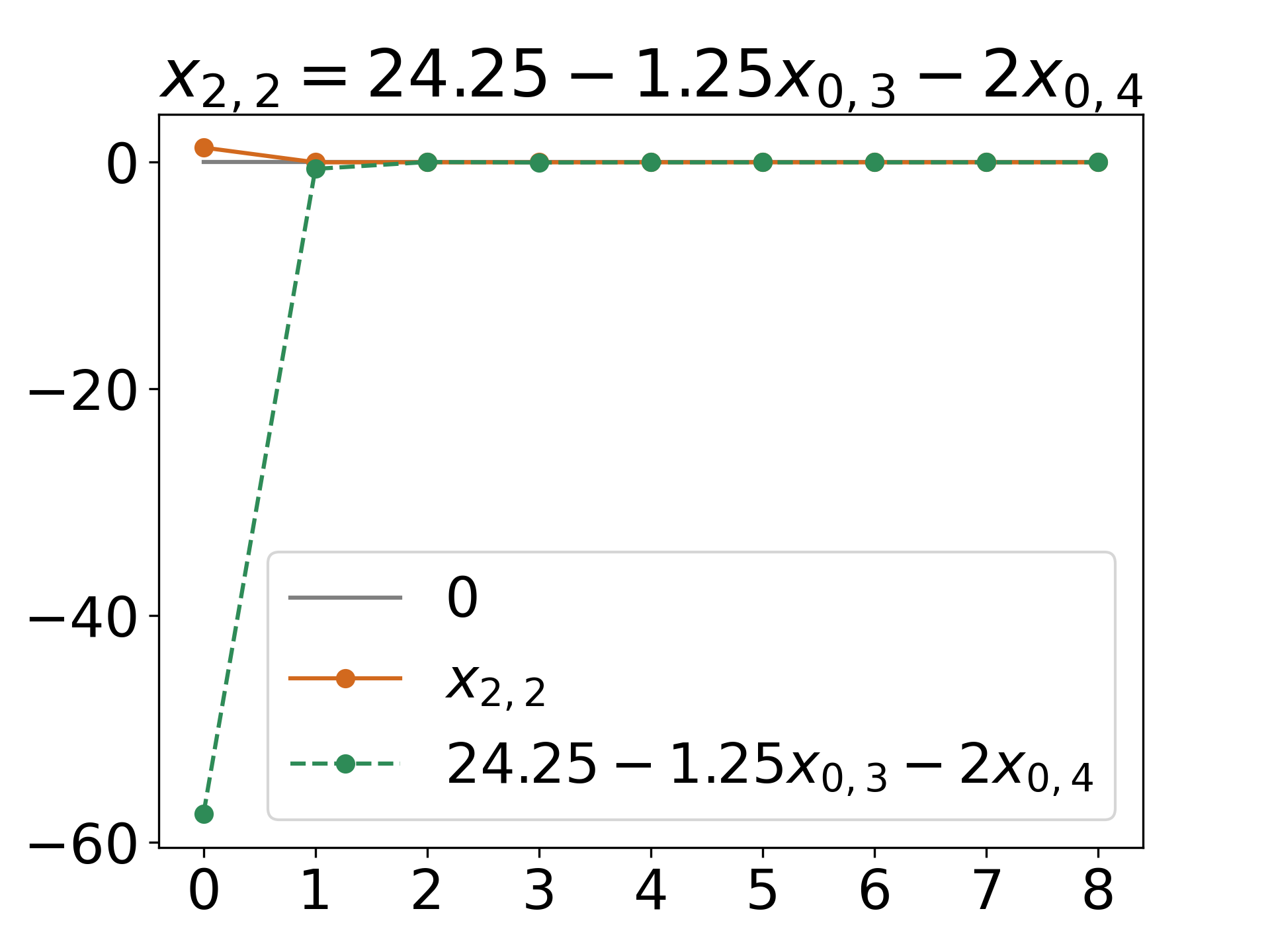}}\\   
  \subfloat[][]
    {\includegraphics[clip, width=0.24\linewidth]{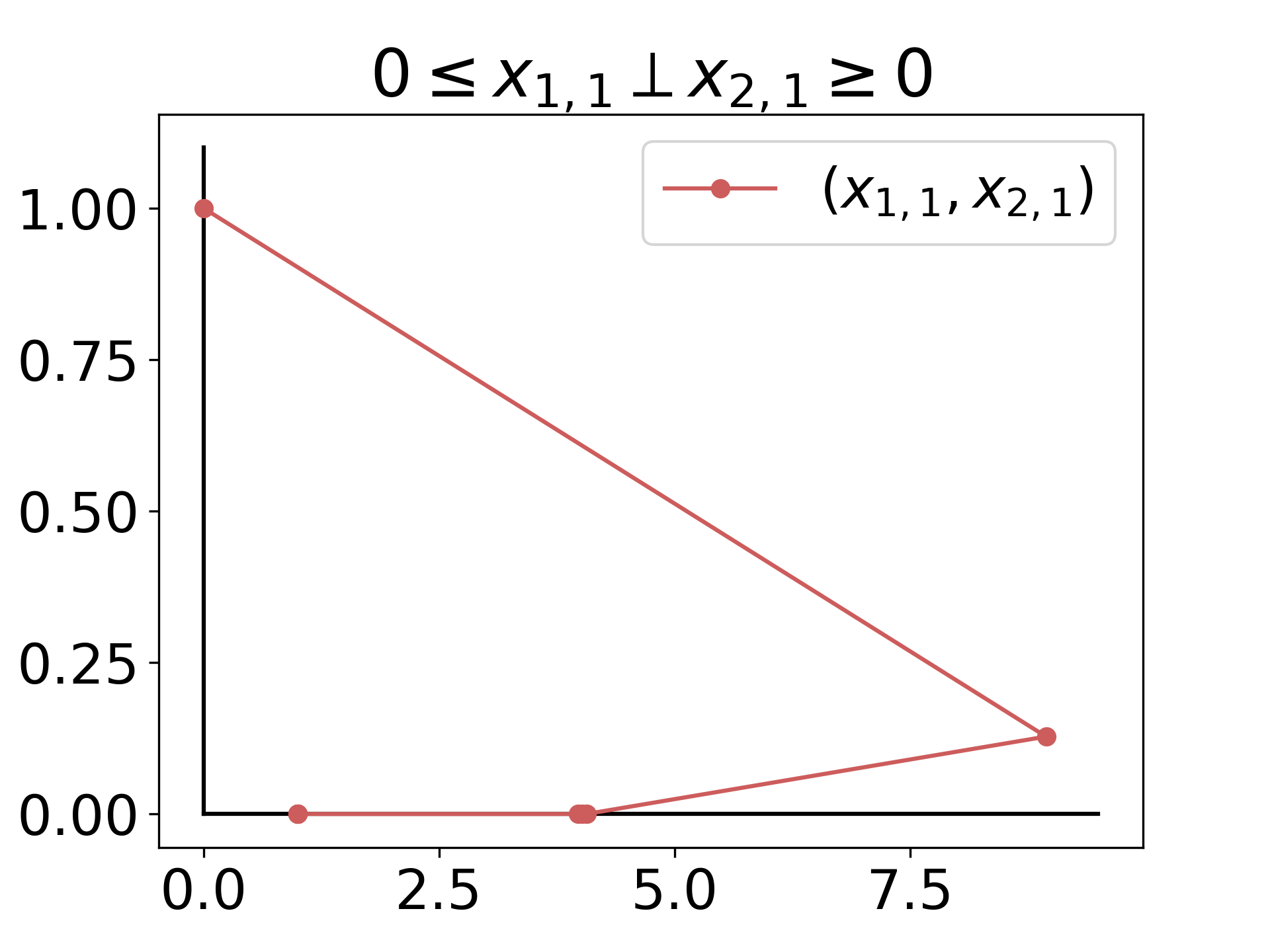}}
    \hfill
  \subfloat[][]
    {\includegraphics[clip, width=0.24\linewidth]{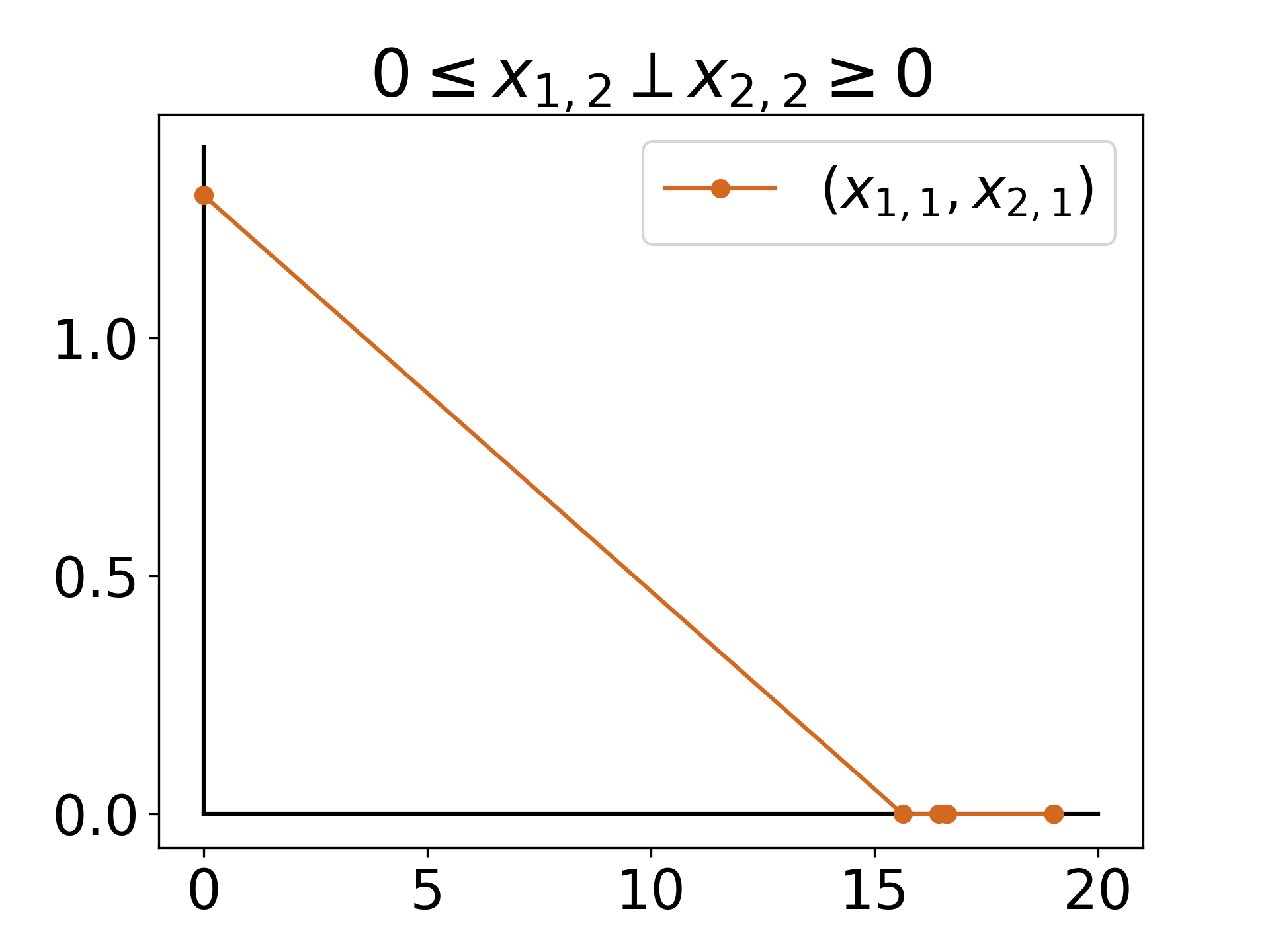}}
    \hfill
\subfloat[][]
    {\includegraphics[clip, width=0.24\linewidth]{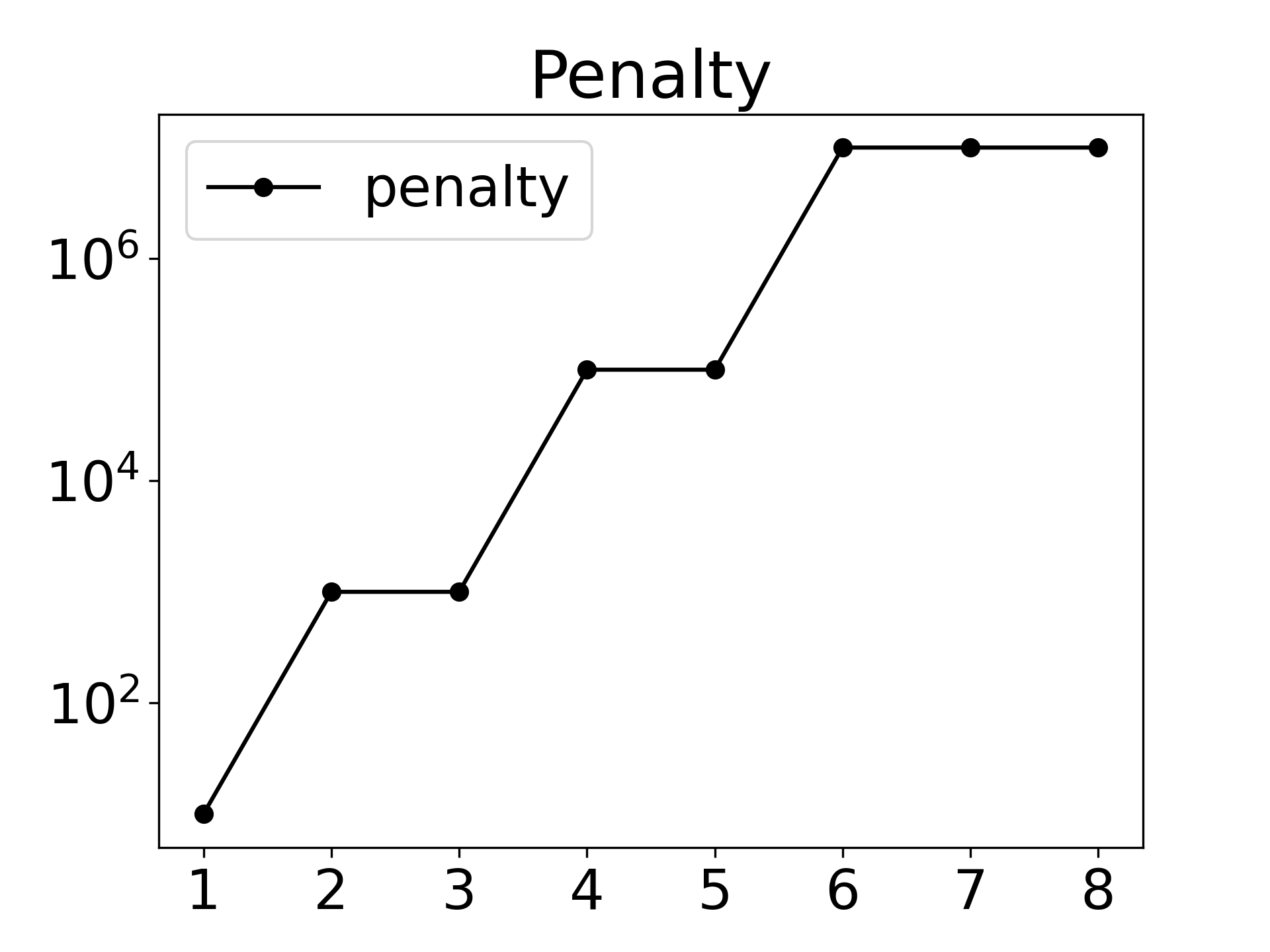}}
    \hfill
\subfloat[][]
    {\includegraphics[clip, width=0.24\linewidth]{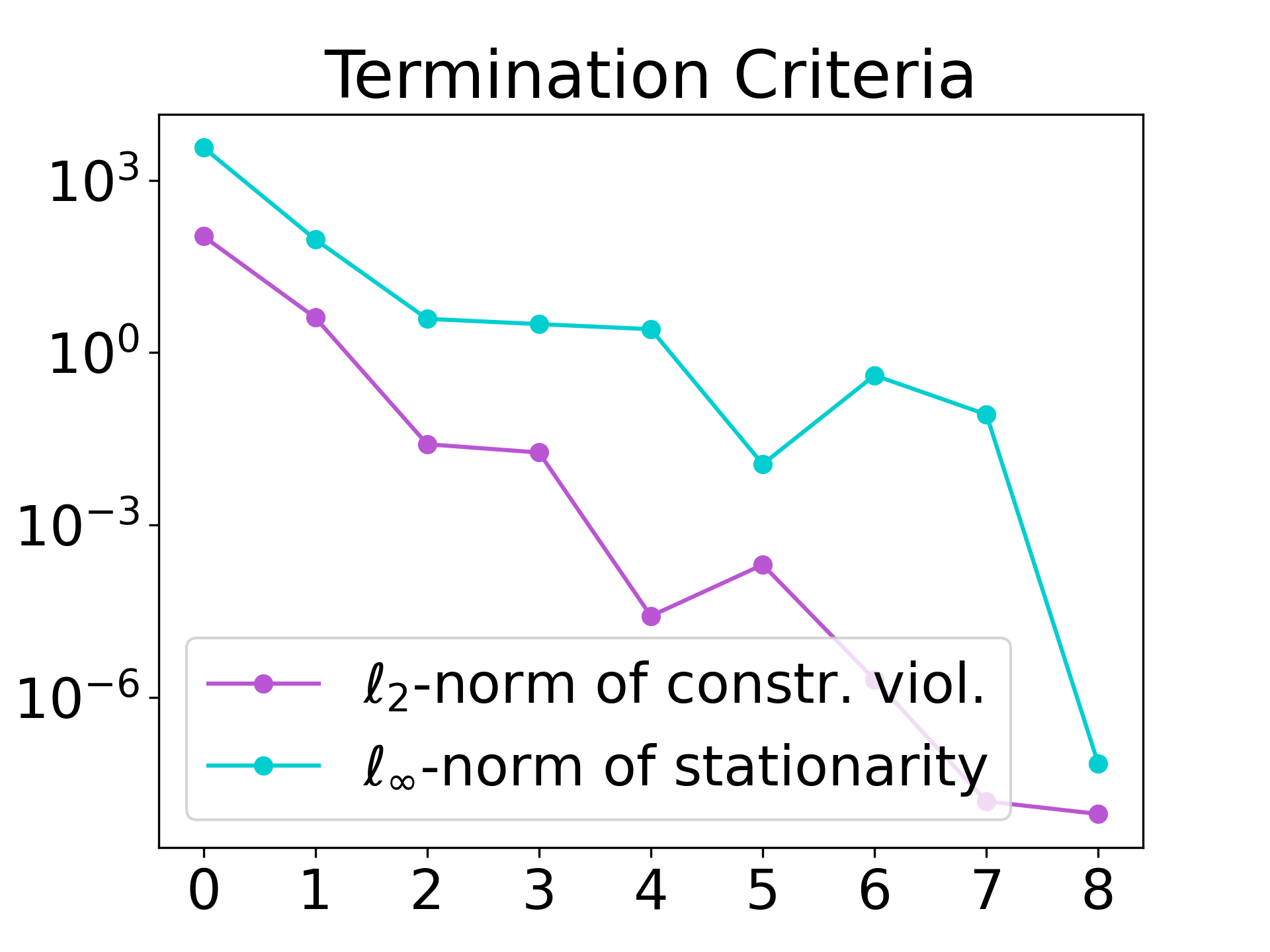}}\\
  \caption{Visualization of the augmented Lagrangian method
  with complementarity constraints treated as general nonlinear constraints,
  convergence of the constraints and slacks for the nonsmooth constraints
  over the iterations ((a)-(d) in top row),
  feasibility of the complementarity conditions ((e)-(f) in bottom row),
  penalty parameter over the iterations ((g) in bottom row) and
  Convergence of optimality and feasibility over the iterations ((h) in bottom row).}\label{fig:nlag_fal}
\end{figure}

\vfill
\framebox{\parbox{.92\linewidth}{The submitted manuscript has been created by
UChicago Argonne, LLC, Operator of Argonne National Laboratory (``Argonne'').
Argonne, a U.S.\ Department of Energy Office of Science laboratory, is operated
under Contract No.\ DE-AC02-06CH11357.  The U.S.\ Government retains for itself,
and others acting on its behalf, a paid-up nonexclusive, irrevocable worldwide
license in said article to reproduce, prepare derivative works, distribute
copies to the public, and perform publicly and display publicly, by or on
behalf of the Government.  The Department of Energy will provide public access
to these results of federally sponsored research in accordance with the DOE
Public Access Plan \url{http://energy.gov/downloads/doe-public-access-plan}.}}
\end{document}